\theoremstyle{plain}
\newtheorem{thm}{Theorem}[subsection] 
\newtheorem{proposition}[thm]{Proposition}
\newtheorem{lemma}[thm]{Lemma}
\newtheorem{cor}[thm]{Corollary}
\theoremstyle{definition}
\newtheorem{defn}[thm]{Definition} 
\newtheorem{remark}[thm]{Remark}
\newtheorem{example}[thm]{Example}
\newcommand{\bb}[1]{\mathbb{#1}}
\newcommand{\comment}[1]{}
\newcommand{\lto}{\longrightarrow}
\newcommand{\prom}{\operatorname{Prom}}
\newcommand{\ctr}{\operatorname{Ctr}}
\newcommand{\der}{\operatorname{Der}}
\newcommand{\weak}{\operatorname{Weak}}
\newcommand{\cut}{\operatorname{Cut}}
\newcommand{\ax}{\operatorname{Ax}}
\newcommand{\conc}{\operatorname{Conc}}
\newcommand{\pax}{\operatorname{Pax}}
\gdef\scalefactor{#1}\begin{center}\proofSkipAmount \leavevmode}%
\scalebox{\scalefactor}{\DisplayProof}\proofSkipAmount \end{center} }
\title{Linear Logic and the Hilbert Scheme}
\author{William Troiani, Daniel Murfet}
\begin{document}

\maketitle

\begin{abstract}
We introduce a geometric model of shallow multiplicative exponential linear logic (MELL) using the Hilbert scheme. Building on previous work interpreting multiplicative linear logic proofs as systems of linear equations, we show that shallow MELL proofs can be modeled by locally projective schemes. The key insight is that while multiplicative linear logic proofs correspond to equations between formulas, the exponential fragment of shallow proofs corresponds to equations between these equations. We prove that the model is invariant under cut-elimination by constructing explicit isomorphisms between the schemes associated to proofs related by cut-reduction steps. A key technical tool is the interpretation of the exponential modality using the Hilbert scheme, which parameterizes closed subschemes of projective space. We demonstrate the model through detailed examples, including an analysis of Church numerals that reveals how the Hilbert scheme captures the geometric content of promoted formulas. This work establishes new connections between proof theory and algebraic geometry, suggesting broader relationships between computation and scheme theory.
\end{abstract}


\section{Introduction}

Linear logic \cite{LinearLogic} can be viewed as a language of constructions in the operators
\[
\oplus, \otimes, \multimap, !\,.
\]
Most of these have meanings that are familiar from tensor algebra: $\oplus$ has the semantics of ``direct sum'', $\otimes$ is ``tensor product'' and $\multimap$ is ``space of linear maps''. The exception is the \emph{exponential} $!$ which has the semantics of a cofree coalgebra \cite{hyland2003glueing, murfet2014logic, murfet2017sweedlers}. The upshot is that linear logic gives a formal language for constructions in a canonical nonlinear extension of tensor algebra. While these operators are familiar to mathematicians, the fact that the structural maps associated to their universal properties can be used to encode \emph{algorithms} is less familiar, and quite remarkable. This is a variant on the Curry-Howard-Lambek correspondence \cite{Howard1980-HOWTFN-2,lambek1988introduction} and a consequence of the fact that proofs in intuitionistic linear logic can be interpreted as algorithms.

In particular, this means that we can translate algorithms into any category of mathematical objects where $\oplus, \otimes, \multimap, !$ can be interpreted. While the class of algorithms that can be encoded in (first-order, intuitionistic) linear logic is limited, it does include the execution of a Turing machine for a finite number of steps \cite{girard1994light,clift2020encodings} and the interpretation of linear logic in vector spaces \cite{murfet2017sweedlers} has been used to make connections between Turing machines, the Ehrhard-Regnier derivative \cite{ehrhard2003differential} and statistical learning theory \cite{clift2019derivativesturingmachineslinear,clift2021geometryprogramsynthesis}.
\\

In this paper we continue a project initiated in \cite{AlgPnt}, which aims to find new interpretations of linear logic using algebraic geometry. At a conceptual level the motivation is the simple idea that the structure of a proof lies in the pattern of repeated occurrences of some atomic degrees of freedom, which are somewhat implicit in sequent calculus or proof net presentations, but which are made explicit as ``variables'' in presentations like the lambda calculus \cite{GMZ}. We can think of such a pattern as constructed from a set of atoms $x,y,z,w,\ldots$ by a set of equations $x = y, y = z, z = w, \ldots$ and in that case, why not model these equations by an ideal $(x - y, y - z, z - w, \ldots)$ and then by a scheme? In this perspective the role of sequents in a proof tree (or edges in a proof net) is to introduce the atomic degrees of freedom $x,y,z,w,\ldots$ and the role of deduction rules is to bind these atoms to each other by equations \cite{AlgPnt} which determine a geometric object.

This idea is already mildly interesting in the case of multiplicative linear logic proofs (those involving only $\otimes, \multimap$) but the really interesting question is: what kind of equations, and thus geometry, represent the deduction rules in linear logic associated to the exponential?
\\

In this paper we give an answer to this question for a set of proof nets $\pi$ which we call \emph{shallow}, for linear logic with the connectives $\otimes, \multimap, !$ (known as MELL), using the Hilbert scheme which is the new ingredient necessary to interpret the exponential connective. To each shallow proof $\pi$ we associate a closed immersion
\begin{equation}\label{eq:loc_proj_pair}
\bb{X}(\pi) \longrightarrow \bb{S}(\pi)
\end{equation}
 of schemes (Definition \ref{defn:xpi}). Schemes are how we think about sets of solutions of polynomial equations in modern algebraic geometry. Here $\bb{S}(\pi)$, which we refer to as the \emph{ambient scheme} of $\pi$, is a disjoint union of projective spaces $\mathbb{P}^n_{\mathbbm{k}}$ over the base field $\mathbbm{k}$ for various $n$. The ambient scheme depends only on the formulas labeling the edges of the proof net (or equivalently the formulas appearing in sequents in the proof tree, if we think in terms of sequent calculus), whereas the full structure of $\pi$ is reflected in the closed subscheme $\bb{X}(\pi)$. As a closed subscheme of a locally projective scheme, $\bb{X}(\pi)$ is itself locally projective. In the conceptual picture introduced above, $\bb{S}(\pi)$ introduces the atomic degrees of freedom (the coordinates in the projective spaces) and $\bb{X}(\pi)$ says how the structure of $\pi$ dictates that these degrees of freedom should be related to one another so that the geometry reflects the proof.

Our main theorem (Theorem \ref{thm:main}) says that the locally projective pair \eqref{eq:loc_proj_pair} is an invariant of proof nets, in the sense that we associate to any cut-reduction step $\gamma: \pi \lto \pi'$ a commutative diagram
\begin{equation}
\xymatrix@C+2pc{
\bb{S}(\pi) \ar[r]^{S_\gamma} & \bb{S}(\pi')\\
\bb{X}(\pi) \ar[u] \ar[r]_-{\cong} & \bb{X}(\pi') \ar[u]
}
\end{equation}
in the category of schemes, where the bottom row is an isomorphism. This is the sense in which we mean that our construction is an interpretation of a (fragment of) linear logic. We expect that these ideas extend to arbitrary proof nets, but since this seems to require more sophisticated algebraic geometry we feel it is worthwhile presenting the simple fragment separately (see Section \ref{sec:future}).
\\

While understanding the details about Hilbert schemes in this paper requires some nontrivial background in algebraic geometry, the geometry that is associated to the deduction rules involving exponentials is ultimately an expression of a simple idea. Promoting multiplicative proofs, which geometrically are interpreted by linear polynomials $x - y, y - z, z - w, \ldots$ introduces additional atomic degrees of freedom $\theta, \phi, \psi, \ldots$ which we can think of as parametrising a \emph{space of equations}
\begin{equation}
x - \theta y - \phi z, \, y - \psi w - \kappa t, \ldots
\end{equation}
in the sense that the point $\theta = 1, \phi = 0, \psi = 1, \kappa = 0$ in this parameter space stands for the system of equations $x = y, y = w$ while the point $\theta = 0, \phi = 1, \psi = 1, \kappa = 0$ stands for $x = z, y = w$. The deduction rules like the contraction rule which operate on exponentiated formulas introduce equations between these parameters such as $\theta = \psi$, which bind the identity of some equations to that of other equations (see Remark \ref{rmk:eq_btw_eq}). In this way our interpretation of shallow proof nets in locally projective pairs of schemes realises the exponential as having the semantics of a \emph{space of proofs} and the geometric content of the deduction rules involving exponentiated formulas as \emph{equations between equations}.

\section{Shallow Proofs and the Hilbert scheme}
\label{sec:parameter}
Proofs in Multiplicative Linear Logic (MLL) can be modelled by systems of linear equations between occurrences of formulas, and computation of a program is in turn modelled by the elimination of variables appearing in these systems \cite{AlgPnt}. This paper proves that \emph{shallow} proofs (Definition \ref{def:shallow_proof}) can be modelled by locally projective schemes. Algebraically, these locally projective schemes describe equations between formulas along with equations between these equations, as made precise in Remark \ref{rmk:eq_btw_eq}.

\begin{defn}
\label{def:shallow_formula}
Let $A$ be a formula. We define the \textbf{depth of $A$}, $\operatorname{Depth}(A)$, by induction on the structure of $A$ as follows:
\begin{itemize}
    \item If $A$ is atomic then $\operatorname{Depth}(A) = 0$.
    \item If $A = A_1 \boxtimes A_2$ where $\boxtimes \in \{\otimes, \parr\}$ then $\operatorname{Depth}(A) = \operatorname{max}\{\operatorname{Depth}(A_1), \operatorname{Depth}(A_2)\}$.
    \item If $A = \neg A'$ then $\operatorname{Depth}(A) = \operatorname{Depth}(A')$.
    \item If $A = \square A'$ where $\square \in \{!,?\}$ then $\operatorname{Depth}(A) = \operatorname{Depth}(A') + 1$.
\end{itemize}
\end{defn}

\begin{defn}
\label{def:shallow_proof}
A formula $A$ is \textbf{linear} if $\operatorname{Depth}(A) = 0$ and is \textbf{shallow} if $\operatorname{Depth}(A) \leq 1$. A proof $\pi$ is \textbf{linear} if all its formulas are.
\end{defn}

\begin{defn}
\label{def:nearly_linear}
    A proof $\pi$ is \textbf{pre-nearly linear} if the following hold:
    \begin{itemize}
        \item All conclusions to all Axiom-links are atomic.
        \item The conclusions of $\pi$ are of the following form: $?A_1, \ldots, ?A_n, B$ (where we allow for the possibility that $n = 0$), with $A_1, \ldots, A_n, B$ linear.
        \item All edges of $\pi$ are labelled by shallow formulas.
        \item There are no Weakening-links and there are no Promotion-links in $\pi$.
    \end{itemize}
    The \textbf{linear part} of a pre-nearly linear proof net $\pi$ is given by removing all Dereliction-links from $\pi$ along with everything beneath these Dereliction-links and attaching the premises of these Dereliction-links to Conclusion-links.

    A proof net $\pi$ is \textbf{nearly linear} if it is pre-nearly linear and all persistent paths of the linear part of $\pi$ go through $B$.
\end{defn}

\begin{defn}
\label{def:shallow}
    A proof is \textbf{shallow} if it satisfies the following:
    \begin{itemize}
        \item All of its edges are labelled by shallow formulas.
        \item There are no nested boxes.
        \item The interior of all boxes are nearly linear proof nets.
    \end{itemize}
\end{defn}

\begin{example}
Let $X$ be atomic. The following proof net is shallow:
\begin{equation}
\label{eq:problematic}
\begin{tikzcd}[column sep = small]
	\bullet &&&& \bullet && \bullet &&&& \bullet \\
	&& \ax &&&&&& \ax \\
	& \der &&&&&& \der \\
	\bullet & \pax && \prom & \bullet && \bullet & \pax && \prom & \bullet \\
	& \conc &&&& \cut &&&& \conc
	\arrow[no head, from=1-1, to=4-1]
	\arrow[no head, from=1-5, to=1-1]
	\arrow[no head, from=1-5, to=4-5]
	\arrow[no head, from=1-7, to=1-11]
	\arrow[no head, from=1-7, to=4-7]
	\arrow[no head, from=1-11, to=4-11]
	\arrow["{\neg X}"', curve={height=12pt}, from=2-3, to=3-2]
	\arrow["X", curve={height=-12pt}, from=2-3, to=4-4]
	\arrow["{\neg X}"', curve={height=12pt}, from=2-9, to=3-8]
	\arrow["X", curve={height=-12pt}, from=2-9, to=4-10]
	\arrow["{?\neg X}"', from=3-2, to=4-2]
	\arrow["{?\neg X}"', from=3-8, to=4-8]
	\arrow[no head, from=4-1, to=4-2]
	\arrow[no head, from=4-2, to=4-4]
	\arrow["{?\neg X}"', from=4-2, to=5-2]
	\arrow[no head, from=4-4, to=4-5]
	\arrow["{!X}"', curve={height=12pt}, from=4-4, to=5-6]
	\arrow[no head, from=4-7, to=4-8]
	\arrow[no head, from=4-8, to=4-10]
	\arrow["{?\neg X}", curve={height=-12pt}, from=4-8, to=5-6]
	\arrow[no head, from=4-10, to=4-11]
	\arrow["{!X}", from=4-10, to=5-10]
\end{tikzcd}
\end{equation}
The following proof net is \emph{not} shallow even though all of its formulas are:
\[\begin{tikzcd}[column sep = small]
	\bullet &&&&&&&&&&& \bullet \\
	& \bullet &&&& \bullet \\
	&&& \ax \\
	&& \der &&&&&&& \ax \\
	& \bullet & \pax && \prom & \bullet &&& \der \\
	&&&&&& \cut \\
	\bullet && \pax &&&&&&&& \prom & \bullet \\
	&& \conc &&&&&&&& \conc
	\arrow[no head, from=1-1, to=7-1]
	\arrow[no head, from=1-12, to=1-1]
	\arrow[no head, from=2-2, to=5-2]
	\arrow[no head, from=2-6, to=2-2]
	\arrow[no head, from=2-6, to=5-6]
	\arrow["{\neg X}"', curve={height=12pt}, from=3-4, to=4-3]
	\arrow["X", curve={height=-12pt}, from=3-4, to=5-5]
	\arrow["{?\neg X}"', from=4-3, to=5-3]
	\arrow["{\neg X}"', curve={height=12pt}, from=4-10, to=5-9]
	\arrow["X", curve={height=-12pt}, from=4-10, to=7-11]
	\arrow[no head, from=5-2, to=5-3]
	\arrow[no head, from=5-3, to=5-5]
	\arrow["{?\neg X}"', from=5-3, to=7-3]
	\arrow[no head, from=5-5, to=5-6]
	\arrow["{!X}"', curve={height=12pt}, from=5-5, to=6-7]
	\arrow["{?\neg X}", curve={height=-12pt}, from=5-9, to=6-7]
	\arrow[no head, from=7-1, to=7-3]
	\arrow[no head, from=7-3, to=7-11]
	\arrow["{?\neg X}"', from=7-3, to=8-3]
	\arrow[no head, from=7-11, to=7-12]
	\arrow["{!X}", from=7-11, to=8-11]
	\arrow[no head, from=7-12, to=1-12]
\end{tikzcd}\]
It is cut-equivalent to the following proof net which is shallow:
\[\begin{tikzcd}[column sep = small]
	\bullet &&&&&& \bullet \\
	&& \ax && \ax \\
	& \der && \cut \\
	\bullet & \pax &&&& \prom & \bullet \\
	& \conc &&&& \conc
	\arrow[no head, from=1-1, to=4-1]
	\arrow[no head, from=1-7, to=1-1]
	\arrow["{\neg X}"', curve={height=12pt}, from=2-3, to=3-2]
	\arrow["X", from=2-3, to=3-4]
	\arrow["{\neg X}"', from=2-5, to=3-4]
	\arrow["X", curve={height=-12pt}, from=2-5, to=4-6]
	\arrow["{?\neg X}"', from=3-2, to=4-2]
	\arrow[no head, from=4-1, to=4-2]
	\arrow[no head, from=4-2, to=4-6]
	\arrow["{?\neg X}"', from=4-2, to=5-2]
	\arrow[no head, from=4-6, to=4-7]
	\arrow["{!X}", from=4-6, to=5-6]
	\arrow[no head, from=4-7, to=1-7]
\end{tikzcd}\]

The following is not shallow because the linear part of the interior of the box fails to satisfy the property that all the persistent paths go through the premise of the Promotion-link:
\[\begin{tikzcd}[column sep = small]
	\bullet &&&&&& \bullet \\
	&& \ax && \ax \\
	& \der && \parr \\
	&&& \der \\
	\bullet & \pax && \pax && \prom & \bullet \\
	& \conc && \conc && \conc
	\arrow[no head, from=1-1, to=1-7]
	\arrow[no head, from=1-7, to=5-7]
	\arrow["{\neg X}"', curve={height=12pt}, from=2-3, to=3-2]
	\arrow["X", from=2-3, to=3-4]
	\arrow["{\neg X}"', from=2-5, to=3-4]
	\arrow["X", curve={height=-12pt}, from=2-5, to=5-6]
	\arrow["{?\neg X}"', from=3-2, to=5-2]
	\arrow["{X \parr \neg X}"', from=3-4, to=4-4]
	\arrow["{?(X \parr \neg X)}"', from=4-4, to=5-4]
	\arrow[no head, from=5-1, to=1-1]
	\arrow[no head, from=5-1, to=5-2]
	\arrow[no head, from=5-2, to=5-4]
	\arrow["{?\neg X}"', from=5-2, to=6-2]
	\arrow["{?(X \parr \neg X)}"', from=5-4, to=6-4]
	\arrow[no head, from=5-6, to=5-4]
	\arrow["{!X}", from=5-6, to=6-6]
	\arrow[no head, from=5-7, to=5-6]
\end{tikzcd}\]
\end{example}

\begin{remark}
The $\prom/\pax$-reduction step involves nesting a box within another, and so necessarily involves proof nets which are not shallow. As a result, we do not consider this reduction step inside this paper. In fact, it is possible to have a shallow proof net whose cut-elimination process necessarily involves a proof net which is \emph{not} shallow (Example \ref{eq:problematic} is such a proof net). Thus, strong normalisation does \emph{not} hold for the class of shallow proof nets. However, we \emph{do} obtain normalisation for many algorithms of interest, for instance addition of Church numerals admits normalisation where all proof nets involved are shallow proof nets.
\end{remark}

In Section \ref{sec:future} we give a research proposal for extending this model to all of MELL using a more general version of the Hilbert scheme.

\subsection{The projective schemes associated to the algebraic model}
In \cite{AlgPnt} we associated a coordinate ring $R_\pi$ to every MLL proof net $\pi$, defined as a quotient $R_\pi = P_\pi/I_\pi$, where $P_\pi$ is a polynomial ring and $I_\pi$ is an ideal. While this algebraic construction is suitable for MLL proof nets, it is more natural to consider the associated schemes for shallow proofs. This shift in perspective is particularly helpful when working with the Hilbert scheme, as the Hilbert scheme lacks a straightforward algebraic counterpart.

In this section we give an introduction to the Hilbert scheme, and in particular does not involve any novel content whatsoever.

\subsection{The Hilbert Functor}
The construction of the Hilbert scheme begins with the Hilbert \emph{functor}. Recall that for $R$ a ring, $M$ an $R$-module, and $k > 0$ an integer, $M$ is \textbf{locally free of rank $k$} if there exists $n>0$ and elements $f_1, \ldots, f_n \in R$ such that for all $i = 1 , \ldots, n$, $M_{f_i}$ is a free $R_{f_i}$-module of rank $k$. If $T$ is a graded $\mathbbm{k}$-algebra, and $h: \bb{N} \lto \bb{N}$ is a function, then the Hilbert functor of $T$ with respect to $h$ is a functor $\underline{H_T^h}: \mathbbm{k}-\underline{\operatorname{Alg}} \lto \underline{\operatorname{Set}}$ where $\underline{\operatorname{Set}}$ is the category of sets and functions. This functor maps a $\mathbbm{k}$-algebra $R$ to the following set, where $R \otimes T$ denotes $\bigoplus_{d \geq 0}R \otimes T_a$:
\begin{align*}
\underline{H_T^h}(R) = \{I \subseteq R \otimes T &\mid I\text{ is homogeneous and } \forall d \geq 0,\\
&(R \otimes T_d)/I_d\text{ is a locally free }R\text{-module }\\
&\text{of rank }h(d)\}.
\end{align*}
It was first proved by Grothendieck in \cite{HilbertScheme} that there exists a scheme $H_T^h$ representing this functor. That is, there is a natural isomorphism for each $R \in \mathbbm{k}-\underline{\operatorname{Alg}}$:
\begin{equation}
\underline{H_T^h}(R) \cong \operatorname{Hom}_{\underline{\operatorname{Sch}}_{\mathbbm{k}}}(\operatorname{Spec}R, H_T^h)
\end{equation}
where $\underline{\operatorname{Sch}}_{\mathbbm{k}}$ is the category of schemes $X \lto \operatorname{Spec}\mathbbm{k}$ over $\operatorname{Spec}\mathbbm{k}$ and morphisms of schemes commuting over $\operatorname{Spec}\mathbbm{k}$. We provide a detailed definition of this scheme in Section \ref{sec:Hilbert_scheme}, and its construction has been reproduced in \cite[Appendix D.6]{troiani2024phd}. This particular version of the Hilbert scheme along with its construction was first written down in \cite{hilb}.

\subsection{Properties of the Hilbert scheme}
Section \ref{sec:model} differs from the original Geometry of Interaction paper \cite{GoI} where rather than interpreting exponentials using the Hilbert \emph{scheme}, Girard interpreted exponentials using the Hilbert \emph{hotel}. To understand the geometry of our model one need not first acquire a knowledge of the Hilbert scheme's construction, but one must understand some of its properties. We have organised this paper so that the minimal theory of the Hilbert scheme required to understand our model is presented, and then the algebraic geometry involving the construction of the Hilbert scheme can be found in \cite{troiani2024phd}.

\subsubsection{The Grassmann scheme}
\label{sec:Grassmann}

\begin{defn}
\label{def:representable}
    Let $X$ be a scheme. We denote the following functor by $h_X$ which acts on objects as
    \begin{align*}
    h_X: \mathbbm{k}-\underline{\operatorname{Alg}} &\lto \underline{\operatorname{Set}}\\
    R &\longmapsto \operatorname{Hom}_{\underline{\operatorname{Sch}}_{\mathbbm{k}}}(\operatorname{Spec}R, X)
\end{align*}
and which maps a homomorphism of $\mathbbm{k}$-algebras $f: R \lto T$ to the composition map
\begin{align*}
    \hat{f} \circ (-): \operatorname{Hom}(\operatorname{Spec}R, X) &\lto \operatorname{Hom}(\operatorname{Spec}T, X)\\
    g &\longmapsto \hat{f} \circ g
\end{align*}
where $\hat{f}: \operatorname{Spec}T \lto \operatorname{Spec}R$ is induced by $f$.
\end{defn}

\begin{defn}
If $F: \mathbbm{k}-\underline{\operatorname{Alg}} \lto \underline{\operatorname{Set}}$ is a functor and there exists a scheme $X$ such that $F \cong h_X$, then $F$ is \textbf{representable} and is \textbf{represented} by $X$.
\end{defn}

Let $R$ be a $\mathbbm{k}$-algebra. Let $n > 0$, $0 < k < n$ and define the following set:
    \begin{align*}
        \underline{G_n^k}(R) = \{ L \subseteq R^n &\mid L\text{ is an }R\text{ submodule, and }\\
        &R^n/L\text{ is a locally free }R\text{-module of rank }k\}.
    \end{align*}
Given an element $L \in \underline{G_n^k}(R)$ and a $\mathbbm{k}$-algebra homomorphism $\phi: R \lto S$ we can tensor the short exact sequence
\begin{equation}
\begin{tikzcd}
0\arrow[r] & L\arrow[r] & R^n\arrow[r] & R^n/L\arrow[r] & 0
\end{tikzcd}
\end{equation}
by $S$ over $R$ and obtain a new short exact sequence which is isomorphic to the following:
\[\begin{tikzcd}
    0 & {S \otimes_{R}L} & {S^n} & {S^n/(S \otimes_{R}L)} & 0.
    \arrow[from=1-1, to=1-2]
    \arrow[from=1-2, to=1-3]
    \arrow[from=1-3, to=1-4]
    \arrow[from=1-4, to=1-5]
\end{tikzcd}\]
It follows that $S^n/(S \otimes_{\mathbbm{k}}L)$ is locally free of rank $k$ if $R^n/L$ is. Thus we have a well defined map $\underline{G_n^k}(R) \lto \underline{G_n^k}(S): L \longmapsto S \otimes_R L$ which is denoted $\underline{G_n^k}(\phi)$. This extends $\underline{G_n^k}$ to a functor.

\begin{defn}
\label{def:Grassmann_functor}
    The functor $\underline{G_n^k}: \mathbbm{k}-\underline{\operatorname{Alg}} \lto \underline{\operatorname{Set}}$ is the \textbf{Grassmann Functor}.
\end{defn}

\begin{example}
Consider the $\bb{C}$-algebra $\bb{C}^2$. Then for any $\bb{C}$-algebra $R$ we have $R \otimes_{\bb{C}}\bb{C}^2 \cong R^2$. Let $e_1, e_2$ be the standard $R$-basis for $R^2$ and consider a short exact sequence
\[\begin{tikzcd}
    0 & {\operatorname{Span}_R\{e_1 - e_2\}} & {R^2} & R & 0
    \arrow[from=1-1, to=1-2]
    \arrow[from=1-2, to=1-3]
    \arrow[from=1-3, to=1-4]
    \arrow[from=1-4, to=1-5]
\end{tikzcd}\]
then $\operatorname{Span}_R\{e_1 - e_2\} \in \underline{G_2^1}(R)$.
\end{example}

Let $\{ e_1, \ldots, e_n \}$ be the standard basis vectors for $R^n$ and $B = \{e_{i_1}, \ldots, e_{i_k}\} \subseteq \{ e_1, \ldots, e_n \}$ be a size $k$ subset with $i_1 < \ldots < i_k$. Among the elements of $\underline{G_n^k}(R)$ are the modules $L \in \underline{G_n^k}(R)$ such that $R^n/L$ has basis $\{ [e_{i_1}]_L, \ldots, [e_{i_k}]_L \}$, where for $i_1, \ldots, i_k$ the notation $[e_{i_j}]_L$ denotes the image of $e_i \in R^n$ under the standard quotient map $R^n \lto R^n/L$. We will denote by $[B]_{L}$ the set $\{ [e_{i_1}]_L, \ldots, [e_{i_k}]_L \}$.

\begin{defn}
    Define the following subset
    \begin{equation}
    \underline{G^k_{n\backslash B}}(R) := \{ L \in \underline{G^k_n}(R) \mid R^n/L\text{ is free with }R\text{-basis }[B]_L \} \subseteq \underline{G^k_n}(R).
\end{equation}
This extends to a full subfunctor of $\underline{G_n^k}$.
\end{defn}
\begin{lemma}
\label{lem:Grassmann_local_rep}
    The functor $\underline{G^k_{n\backslash B}}$ is represented by
    \begin{equation}
    \label{eq:representing_scheme}
        \operatorname{Spec}\mathbbm{k}[\{z_i^j \mid 1 \leq i \leq k, 1 \leq j \leq n - k\}].
    \end{equation}
\end{lemma}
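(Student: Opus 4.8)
The plan is to construct an explicit natural isomorphism $\Phi\colon h_X \Rightarrow \underline{G^k_{n\backslash B}}$, where $X$ denotes the affine space in \eqref{eq:representing_scheme}. Fix the enumeration $B = \{e_{p_1}, \dots, e_{p_k}\}$ with $p_1 < \dots < p_k$, and let $q_1 < \dots < q_{n-k}$ be the complementary indices. Writing $\Lambda = \mathbbm{k}[\{z_i^j \mid 1 \le i \le k,\ 1 \le j \le n-k\}]$, a morphism $\operatorname{Spec} R \to X$ is the same datum as a $\mathbbm{k}$-algebra homomorphism $\Lambda \to R$, i.e.\ a family $(\alpha_i^j)$ of elements of $R$ with $1 \le i \le k$ and $1 \le j \le n-k$. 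To such a family I assign the submodule
\[
L \ :=\ \operatorname{Span}_R\{\, e_{q_j} - \textstyle\sum_{i=1}^{k}\alpha_i^j\, e_{p_i} \mid 1 \le j \le n-k \,\} \ \subseteq\ R^n,
\]
which is equivalently the base change along $\Lambda \to R$ of the ``universal'' submodule of $\Lambda^n$ spanned by the elements $e_{q_j} - \sum_i z_i^j e_{p_i}$.

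First I would check that $L$ indeed lies in $\underline{G^k_{n\backslash B}}(R)$. Modulo $L$ one has $[e_{q_j}]_L = \sum_i \alpha_i^j [e_{p_i}]_L$, so $[B]_L$ generates $R^n/L$; moreover the $R$-linear map $R^n \to R^k$ sending $e_{p_i}$ to the $i$-th standard basis vector and each $e_{q_j}$ to $\sum_i \alpha_i^j$ times it annihilates all listed generators of $L$, hence factors through $R^n/L$ and is a one-sided inverse to the surjection $R^k \twoheadrightarrow R^n/L$, $f_i \mapsto [e_{p_i}]_L$. Therefore $R^n/L \cong R^k$ is free with basis $[B]_L$, as required. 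Naturality of $\Phi$ in $R$ is then immediate from the definition of $\underline{G_n^k}(\phi)$ by tensoring, using that $R^n/L$ is free and hence flat, so base change preserves exactness of the defining sequence and carries generators to generators.

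The substantive point is that each $\Phi_R$ is a bijection. Injectivity is the uniqueness of coordinates in the free module $R^n/L$: the identity $[e_{q_j}]_L = \sum_i \alpha_i^j [e_{p_i}]_L$ recovers the $\alpha_i^j$ from $L$ alone. For surjectivity, let $L \in \underline{G^k_{n\backslash B}}(R)$ be arbitrary; since $R^n/L$ is free on $[B]_L$ there are unique $\alpha_i^j \in R$ with $[e_{q_j}]_L = \sum_i \alpha_i^j [e_{p_i}]_L$, so $v_j := e_{q_j} - \sum_i \alpha_i^j e_{p_i} \in L$. Let $L' \subseteq L$ be the submodule generated by $v_1,\dots,v_{n-k}$; by the computation above $R^n/L'$ is free of rank $k$, and the canonical map $R^n/L' \twoheadrightarrow R^n/L$ is a surjection of a free rank-$k$ module onto a locally free rank-$k$ module. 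Its kernel $L/L'$ splits off $R^n/L'$ by projectivity of $R^n/L$, hence is finitely generated, and it vanishes after localizing at every prime by comparing ranks and invoking Nakayama; thus $L' = L$ and $L = \Phi_R((\alpha_i^j))$.

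The step I expect to demand the most care is this last one: showing that the evident generators $v_j$ span the \emph{whole} of $L$ and not merely a submodule of $L$ with free quotient of the correct rank. It rests on the small homological fact that a surjection of finitely generated $R$-modules from a locally free module of rank $k$ onto a locally free module of rank $k$ must be an isomorphism; the rest is bookkeeping with bases, quotients and base change. Once injectivity, surjectivity and naturality are in hand, we conclude $\underline{G^k_{n\backslash B}} \cong h_X$, i.e.\ $\underline{G^k_{n\backslash B}}$ is represented by $\operatorname{Spec}\mathbbm{k}[\{z_i^j \mid 1 \le i \le k,\ 1 \le j \le n-k\}]$, as claimed.
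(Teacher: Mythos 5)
Your proposal is correct and follows essentially the same route as the paper: both identify an element $L \in \underline{G^k_{n\backslash B}}(R)$ with the $k \times (n-k)$ matrix of coefficients expressing the classes $[e_{q_j}]_L$ in the basis $[B]_L$, which is the same data as a $\mathbbm{k}$-algebra morphism $\mathbbm{k}[\{z_i^j\}] \to R$. The only difference is one of completeness: the paper's proof merely asserts this equivalence of data, while you also verify the genuinely non-trivial direction --- that the elements $e_{q_j} - \sum_i \alpha_i^j e_{p_i}$ generate all of $L$, via the splitting and localization argument --- so your writeup fills in a step the paper leaves implicit.
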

\begin{proof}
    Fix a $\mathbbm{k}$-algebra $R$. If $L \in \underline{G_{n\backslash B}^k}$ then for each $e_{m} \not\in B$ we have
    \begin{equation}
        [e_{m}] = \sum_{i = 1}^k \alpha_{i}^j[e_{i_j}]
    \end{equation}
    for some coefficients $\{\alpha_{i}^j\}_{1 \leq i \leq k, 1 \leq j \leq n-k} \subseteq R$. The data of these coefficients is equivalent to the data of a $\mathbbm{k}$-algebra morphism
    \begin{equation}
        \mathbbm{k}[\{z_i^j\}] \lto R
    \end{equation}
    which in turn is equivalent to the data of a morphism $\operatorname{Spec}R \lto \operatorname{Spec}\mathbbm{k}[\{z_i^j\}]$.
\end{proof}

\begin{proposition}
\label{prop:Grassmann_representable}
For all $n > k > 0$, the functor $\underline{G_n^k}$ is represented by a closed subscheme of $\bb{P}^{\binom{n}{k}-1}$.
\end{proposition}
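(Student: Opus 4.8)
The plan is to exhibit $\underline{G_n^k}$ as glued from the affine pieces $\underline{G^k_{n\backslash B}}$ produced by Lemma~\ref{lem:Grassmann_local_rep}, and then identify this glued functor with a closed subfunctor of projective space via a Pl\"ucker-type embedding. First I would show that the subfunctors $\underline{G^k_{n\backslash B}}$, as $B$ ranges over the $\binom{n}{k}$ size-$k$ subsets of $\{e_1,\dots,e_n\}$, form an open cover of $\underline{G_n^k}$: concretely, given a $\mathbbm{k}$-algebra $R$ and $L\in\underline{G_n^k}(R)$, local freeness of $R^n/L$ of rank $k$ means that after inverting finitely many elements $f_1,\dots,f_m\in R$ covering $\operatorname{Spec}R$, each localization $(R^n/L)_{f_\ell}$ is free, and by a standard argument (a surjection of free modules $R_{f_\ell}^n\to R_{f_\ell}^k$ splits locally, and the images of some $k$ of the standard basis vectors must form a basis after possibly shrinking) one can refine the cover so that on each piece $L$ lies in $\underline{G^k_{n\backslash B}}$ for some $B$. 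This shows the $\underline{G^k_{n\backslash B}}$ are representable open subfunctors whose union is all of $\underline{G_n^k}$, so $\underline{G_n^k}$ is represented by a scheme obtained by gluing the affine spaces \eqref{eq:representing_scheme} along the evident overlaps $\underline{G^k_{n\backslash B}}\cap\underline{G^k_{n\backslash B'}}$.

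Next I would construct the Pl\"ucker morphism. To a submodule $L\in\underline{G_n^k}(R)$ with $R^n/L$ locally free of rank $k$ one associates the top exterior power: the surjection $R^n\twoheadrightarrow R^n/L$ induces a surjection $\Lambda^k R^n \twoheadrightarrow \Lambda^k(R^n/L)$, and $\Lambda^k(R^n/L)$ is an invertible (locally free rank one) $R$-module. Since $\Lambda^k R^n$ is free of rank $\binom{n}{k}$, this is exactly the data of an $R$-point of $\bb{P}^{\binom{n}{k}-1}$; functoriality in $R$ is immediate from the naturality of $\Lambda^k$ and base change, so we get a natural transformation $\underline{G_n^k}\to h_{\bb{P}^{\binom{n}{k}-1}}$. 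Restricted to the chart $\underline{G^k_{n\backslash B}}$, unwinding the coefficients $\alpha_i^j$ from the proof of Lemma~\ref{lem:Grassmann_local_rep} shows this morphism lands in the standard affine chart $\{X_B\neq 0\}\cong\bb{A}^{k(n-k)}$ of $\bb{P}^{\binom{n}{k}-1}$ and is an isomorphism onto it — the other Pl\"ucker coordinates $X_{B'}/X_B$ are polynomials in the $z_i^j$ (signed maximal minors), giving a closed immersion of each chart into the corresponding affine chart of projective space, cut out by the Pl\"ucker relations.

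Finally I would assemble these local closed immersions into a global one: the charts $\operatorname{Spec}\mathbbm{k}[z_i^j]$ cover $\underline{G_n^k}$, the standard affine charts $\{X_B\neq 0\}$ cover $\bb{P}^{\binom{n}{k}-1}$, the Pl\"ucker morphism carries the $B$-th chart of the Grassmann functor into the $B$-th chart of $\bb{P}^{\binom{n}{k}-1}$ as a closed immersion, and these are compatible on overlaps by the functoriality established above; hence the global Pl\"ucker morphism $\underline{G_n^k}\to\bb{P}^{\binom{n}{k}-1}$ is a closed immersion (closedness and the property of being an immersion are local on the target), which is the assertion. I expect the main obstacle to be the first step — verifying carefully that the charts $\underline{G^k_{n\backslash B}}$ genuinely cover $\underline{G_n^k}$ as subfunctors, i.e. that for an arbitrary $R$ and arbitrary locally free quotient one can always find, Zariski-locally on $\operatorname{Spec}R$, a subset $B$ of standard basis vectors mapping to a basis of the quotient; this requires the local-freeness bookkeeping (choosing the $f_i$, refining the cover, checking the minor conditions are exactly the open conditions on $\operatorname{Spec}R$) to be done honestly rather than waved away. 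The exterior-power construction and the identification with the affine charts are then essentially formal computations with determinants.
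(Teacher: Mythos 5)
Your proposal is correct and follows essentially the same route as the paper, which simply glues the affine charts of Lemma \ref{lem:Grassmann_local_rep} over all $B$ and defers the remaining details (including the Pl\"ucker embedding into $\bb{P}^{\binom{n}{k}-1}$) to an external appendix. The one point worth making fully explicit in a write-up is that the preimage of the standard chart $\{X_B\neq 0\}$ under the Pl\"ucker morphism is exactly the subfunctor $\underline{G^k_{n\backslash B}}$, since that is what justifies checking the closed-immersion property chart by chart on the target.
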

\begin{proof}
The representing scheme can be constructed by gluing together the schemes \ref{eq:representing_scheme} ranging over all $B$, see \cite[Appendix D.5]{troiani2024phd}.
\end{proof}

\begin{defn}
\label{def:Grassmann_embedding}
We denote the projective scheme representing the functor $\underline{G_n^k}$ by $G_n^k$. This is the \textbf{Grassmann scheme}.
\end{defn}

\subsubsection{The Hilbert scheme}
\label{sec:Hilbert_scheme}
We follow \cite{hilb}.
\begin{defn}
    A \textbf{graded $\mathbbm{k}$-module with operators} is a pair $(T,F)$ consisting of a graded $\mathbbm{k}$-module
    \begin{equation}
        T = \bigoplus_{d \in \bb{N}}T_d
    \end{equation}
    and a family of operators
    \begin{equation}
        F = \bigcup_{d,e \in \bb{N}}F_{d,e}
    \end{equation}
    where for all $d,e \in \bb{N}, F_{d,e} \subseteq \operatorname{Hom}(T_d, T_e)$.
\end{defn}
\begin{defn}
    Let $(T,F)$ be a graded $\mathbbm{k}$-module with operators. A graded submodule
    \begin{equation}
        L = \bigoplus_{d \in \bb{N}}L_d \subseteq T
    \end{equation}
    is an \textbf{$F$-submodule} if $F_{d,e}(L_d) \subseteq L_e$ for all $d, e \in \bb{N}$.
\end{defn}
\begin{example}
\label{ex:algebra_F_module}
    If $T$ is a graded $\mathbbm{k}$-algebra then for $a,b \in \bb{N}$, define:
    \begin{equation}
        F_{a,b} = 
        \begin{cases}
            \text{The set of multiplications by monomials of degree }e - d,& d \geq e\\
            \varnothing,&d<e
        \end{cases}
    \end{equation}
    then any homogeneous ideal is a homogeneous $F$-module where $F = \{F_{d,e}\}_{d,e \in \bb{N}}$.
\end{example}

\begin{defn}
If $(T,F)$ is a graded $\mathbbm{k}$-module with operators and $D \subseteq \bb{N}$ is a subset of the degrees, we denote by $(T_D, F_D)$ the graded $\mathbbm{k}$-module with operators where
\begin{equation}
T_D = \bigoplus_{d \in D}T_d,\quad F_D = \{F_{d,e} \in F\mid d,e \in D\operatorname\}.
\end{equation}
\end{defn}

Let $R$ be a commutative $\mathbbm{k}$-algebra. Notice that if $(T,F)$ is a graded $\mathbbm{k}$-module with operators, then so is
\begin{equation}
    R \otimes T := \bigoplus_{d \in \bb{N}}R \otimes T_d
\end{equation}
when paired with the operators $\hat{F} = \{\operatorname{id}_R \otimes F_{d,e}\}_{d,e \in \bb{N}}$. Given a function $h: \bb{N} \lto \bb{N}$ we define the set
\begin{align*}
    \underline{H_T^h}(R) = \{ F-\text{submodules } L \subseteq R \otimes T &\mid \forall d \in \bb{N}, (R \otimes T_d)/L_d\text{ is }\\
    &\text{ locally free of rank }h(d)\}.
\end{align*}
Let $\phi: R \lto S$ be a $\mathbbm{k}$-algebra homomorphism and let $f_1, \ldots, f_n \in R$ be a set of elements generating the unit ideal. Then for any $d \in \bb{N}$ and any $i = 1, \ldots, n$ there is a short exact sequence
\begin{equation}
\begin{tikzcd}
0\arrow[r] & (L_d)_{f_i}\arrow[r] & (R \otimes T_d)_{f_i}\arrow[r] & (R\otimes T_d/L_d)_{f_i}\arrow[r] & 0.
\end{tikzcd}
\end{equation}
By tensoring with $S$ over $R$ we obtain a similar short exact sequence. The function $\underline{H_T^h}(R) \lto \underline{H_T^h}(S), L \lto S \otimes L$ is denoted $\underline{H_T^h}(\phi)$. It is easy to see that $\underline{H_T^h}: \mathbbm{k}-\underline{\operatorname{Alg}} \lto \underline{\operatorname{Set}}$ is a functor.
\begin{defn}
    The functor $\underline{H_T^h}$ is the \textbf{Hilbert functor}.
\end{defn}
\begin{defn}
Let $D \subseteq \bb{N}$. The \textbf{restriction} is the following natural transformation $\operatorname{Res}_{T_D}: \underline{H_T^h} \lto \underline{H_{T_D}^h}$ which maps an element $L \in \underline{H_T^h}(R)$ to the restriction $L_D = \bigoplus_{d \in D}L_d$.
\end{defn}

\begin{thm}
\label{thm:Hilbert_scheme}
    Let $(T,F)$ be a graded $\mathbbm{k}$-module with operators. Let $h: \bb{N} \lto \bb{N}$ be a function such that $\sum_{d \in \bb{N}}h(d) < \infty$. Suppose $M \subseteq N \subseteq T$ are homogeneous $\mathbbm{k}$-submodules satisfying:
    \begin{itemize}
        \item $N$ is a finitely generated $\mathbbm{k}$-module.
        \item $N$ generates $T$ as an $F$-module.
        \item For every field $K \in \mathbbm{k}-\underline{\operatorname{Alg}}$ and every $L \in \underline{H_T^h(K)}$, $M$ generates $(K \otimes T)/L$ as a $K$-module.
        \item There is a subset $G \subseteq F$ so that $G$ is the closure of $F$ under composition and $G$ is such that $GM \subseteq N$.
    \end{itemize}
    Then $\underline{H_T^h}$ is represented by a quasiprojective scheme $H_T^h$.
\end{thm}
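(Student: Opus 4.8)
The plan is to reduce the representability of $\underline{H_T^h}$ to that of the Grassmann functor $\underline{G_n^k}$, which we already know (Proposition \ref{prop:Grassmann_representable}) is represented by a closed subscheme of a projective space. The strategy is: (1) use the hypotheses to bound the relevant data in finitely many degrees; (2) realise the Hilbert functor as a closed subfunctor of a finite product of Grassmann functors; (3) invoke representability of closed subfunctors of representable functors.

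First I would observe that since $\sum_{d} h(d) < \infty$, the Hilbert polynomial is eventually zero, and since $N$ generates $T$ as an $F$-module with $N$ finitely generated, only finitely many degrees $d$ carry nontrivial information: there is a finite set $D \subseteq \bb{N}$ such that for any $\mathbbm{k}$-algebra $R$ the restriction map $L \mapsto L_D$ determines $L$. Concretely, the last two bullet hypotheses (that $M$ generates the quotient over any field, and that $G M \subseteq N$ with $G$ the composition-closure of $F$) are precisely what is needed to show that knowing $L$ in the degrees occupied by $M$ and $N$ pins down $L_e$ in every degree $e$: an element of $(R\otimes T_e)$ lies in $L_e$ if and only if, after multiplying by the operators in $G$ landing in the degrees of $N$, it lands inside $L$ there — this is a Nakayama-type / flatness argument, carried out fibrewise over fields and then spread out using local freeness. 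So $\operatorname{Res}_{T_D}$ is injective on $R$-points, naturally in $R$, and hence $\underline{H_T^h}$ is a subfunctor of $\underline{H_{T_D}^h}$.

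Next, for the finite truncation $(T_D, F_D)$ with $N \subseteq T_D$ finitely generated, each $T_d$ ($d \in D$) is a finitely generated $\mathbbm{k}$-module, so the condition ``$(R\otimes T_d)/L_d$ is locally free of rank $h(d)$'' exhibits $L_d$ as an $R$-point of a Grassmann functor $\underline{G_{n_d}^{h(d)}}$ (after choosing generators of $T_d$; if $T_d$ is not free one works on the affine cover and glues, exactly as in Lemma \ref{lem:Grassmann_local_rep} and Proposition \ref{prop:Grassmann_representable}). Thus $\underline{H_{T_D}^h}$ embeds into the finite product $\prod_{d \in D} \underline{G_{n_d}^{h(d)}}$, which is represented by the product scheme $\prod_{d \in D} G_{n_d}^{h(d)}$, a projective scheme. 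The remaining content is to cut out the image: a tuple $(L_d)_{d \in D}$ comes from an $F$-submodule precisely when $F_{d,e}(L_d) \subseteq L_e$ for all $d,e \in D$, and these are closed conditions (each is the vanishing of the composite $L_d \hookrightarrow R\otimes T_d \xrightarrow{\hat F_{d,e}} R\otimes T_e \twoheadrightarrow (R\otimes T_e)/L_e$, a morphism of locally free sheaves over the product, whose locus of vanishing is closed). Therefore $\underline{H_{T_D}^h}$, and hence $\underline{H_T^h}$, is represented by a closed — in particular quasiprojective — subscheme of $\prod_{d\in D} G_{n_d}^{h(d)}$.

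The main obstacle, and the step that requires the most care, is the second one: proving that $\operatorname{Res}_{T_D}$ is injective on $R$-points for an \emph{arbitrary} $\mathbbm{k}$-algebra $R$, not merely for fields. The hypotheses are phrased over fields ($M$ generates the quotient for every field $K$), and the passage to general $R$ has to go through checking the relevant inclusion $L_e \supseteq \{$elements whose $G$-translates lie in $L\}$ locally and then using that both sides are (locally) direct summands of the same finitely generated locally free module with the same rank $h(e)$, so that an inclusion of locally free modules of equal rank which is an isomorphism at every point is an isomorphism. Managing the bookkeeping of which finite set $D$ of degrees suffices — i.e. extracting a uniform bound from ``$N$ generates $T$ as an $F$-module'' together with $\sum h(d) < \infty$ — is the other delicate point, but it is essentially a finiteness argument once one fixes generators of $N$ and notes that $G$ is generated by the $F_{d,e}$.
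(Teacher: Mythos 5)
The paper does not prove this theorem itself; it cites \cite[Theorem 2.2]{hilb}, and your outline (restrict to finitely many degrees, embed into Grassmannians, cut out by the operator conditions) is the same strategy as that reference, which realises $H_T^h$ inside a single Grassmannian $G_n^r$ with $r = \sum_d h(d)$ rather than your product $\prod_{d \in D} G_{n_d}^{h(d)}$ --- a cosmetic difference. However, there is a genuine gap in the last step. You establish that $\operatorname{Res}_{T_D}$ is injective on $R$-points, so that $\underline{H_T^h}$ is a \emph{subfunctor} of $\underline{H_{T_D}^h}$, and then conclude ``and hence $\underline{H_T^h}$'' is represented by a closed subscheme. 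A subfunctor of a representable functor is not automatically representable: you must identify the \emph{image} of $\operatorname{Res}_{T_D}$, i.e.\ characterise which tuples $(L_d)_{d \in D}$ extend to an $F$-submodule $L \subseteq R \otimes T$ whose quotient has rank exactly $h(e)$ in every degree $e \notin D$. Taking the $F$-span of $L_D$ produces a candidate $L_e$, and by the hypothesis on $M$ its quotient is generated by the image of the finitely generated module $M_e$; but ``locally free of rank $h(e)$'' for such a quotient is a \emph{locally closed} condition (a closed Fitting-ideal condition together with an open one), not a closed condition. This is precisely why the theorem asserts only quasiprojectivity, and why the paper later needs the notions of ``supportive'' and ``very supportive'' degree sets: your argument as written would prove that $H_T^h$ is projective, which is false in this generality.

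A second, smaller issue: you assert that each $T_d$ for $d \in D$ is a finitely generated $\mathbbm{k}$-module because $N$ is. That does not follow --- only $N_d$ is finitely generated, and $T_d$ may well be infinite-dimensional. The hypotheses involving $M$ and $N$ exist exactly to replace the possibly infinitely generated $T_d$ by finite data: one embeds into the (projective) Grassmannian of the finitely generated module $N$, and uses the condition that $M$ generates the quotient (checked fibrewise over residue fields and promoted to general $R$ by Nakayama, since the quotient is finitely generated locally free) to see that $L \cap (R \otimes N)$ determines $L$ and carries all of the rank. Your ``main obstacle'' paragraph correctly flags the field-to-general-$R$ passage for injectivity, but the harder and genuinely missing point is the surjectivity side: cutting out the image by locally closed conditions.
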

\begin{proof}
See \cite[Theorem 2.2]{hilb}. We also reproduced this proof in \cite[Section D.6]{troiani2024phd}.
\end{proof}

Theorem \ref{thm:Hilbert_scheme} only holds when $h$ is such that $\sum_{d \in \bb{N}}h(d) < \infty$ because we construct $H_T^h$ as a subscheme of $G^r_n$ for some $r > \sum_{d \in \bb{N}}h(d)$. We wish to apply Theorem \ref{thm:Hilbert_scheme} in the setting where $h$ is the Hilbert function (recalled in Definition \ref{def:Hilbert_function}) of a homogeneous ideal of $\mathbbm{k}[x_0, \ldots, x_n]$ (given with respect to the standard grading). This function in general is \emph{not} of finite support. To mitigate this, we follow \cite{hilb} and construct a subset $D \subseteq \bb{N}$ to exhibit the Hilbert functor $\underline{H_T^h}$ as a subfunctor of $\underline{H_{T_D}^h}$. We then relate to this a closed immersion of schemes $H_T^h \lto H_{T_D}^h$.

\begin{defn}
    \label{def:Hilbert_function}
    Let $S$ be a graded $\mathbbm{k}$-algebra and $I \subseteq S$ a homogeneous ideal. The \textbf{Hilbert function} of $I$ is the function
    \begin{align*}
        \bb{N} &\lto \bb{N}\\
        n &\longmapsto \operatorname{dim}_{\mathbbm{k}}(S_n/I_n).
    \end{align*}
\end{defn}

\begin{proposition}
\label{prop:unique_expansion}
Let $d > 0, c > 0$. There exists a unique expression
\begin{equation}
\label{eq:unique_expansion}
c = \binom{k_d}{d} + \binom{k_{d-1}}{d-1} + \ldots + \binom{k_\delta}{\delta}
\end{equation}
where $k_d > k_{d-1} > \ldots > k_\delta \geq \delta > 0$.
\end{proposition}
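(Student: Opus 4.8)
This is the classical Macaulay (or $d$-th binomial) representation of a positive integer, and the plan is to prove existence by a greedy construction packaged as an induction on $d$, and uniqueness by first establishing a sharp upper bound for sums of the permitted shape.

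\textbf{Existence.} I would induct on $d$. For $d=1$ one simply takes $k_1 = c$, so $\delta = 1$ and $k_1 \geq 1$. For $d \geq 2$, let $k_d$ be the largest integer with $\binom{k_d}{d} \leq c$; this exists because $\binom{d}{d}=1\leq c$, and the same inequality, together with strict monotonicity of $n \mapsto \binom{n}{d}$ on $n \geq d$, forces $k_d \geq d$. If $\binom{k_d}{d}=c$ we stop, with a single term and $\delta = d$. Otherwise put $c' = c - \binom{k_d}{d} > 0$; maximality of $k_d$ gives $\binom{k_d+1}{d} > c$, and Pascal's rule turns this into $c' < \binom{k_d+1}{d} - \binom{k_d}{d} = \binom{k_d}{d-1}$. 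Applying the inductive hypothesis to $c'$ in degree $d-1$ yields $c' = \binom{k_{d-1}}{d-1} + \ldots + \binom{k_\delta}{\delta}$ with $k_{d-1} > \ldots > k_\delta \geq \delta > 0$, and since $\binom{k_{d-1}}{d-1} \leq c' < \binom{k_d}{d-1}$ we obtain $k_{d-1} < k_d$; prepending $\binom{k_d}{d}$ gives the required expansion for $c$.

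\textbf{Uniqueness.} The key lemma will be that for any strictly decreasing sequence $k_d > k_{d-1} > \ldots > k_\delta \geq \delta \geq 1$ one has
\[ \sum_{i=\delta}^{d} \binom{k_i}{i} < \binom{k_d + 1}{d}. \]
To prove this I would note $k_i \leq k_d - (d-i)$ and $k_i \geq i$, so $\binom{k_i}{i} \leq \binom{k_d-(d-i)}{i}$; summing and re-indexing, $\sum_{i=\delta}^{d}\binom{k_d-(d-i)}{i} \leq \sum_{i=0}^{d}\binom{k_d-(d-i)}{i} = \binom{k_d+1}{d}$ by the hockey-stick identity, and the inequality is strict because the $i=0$ term $\binom{k_d-d}{0}=1$ is omitted from the left-hand sum, which is where $\delta \geq 1$ is used. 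Granting the lemma, any admissible representation of $c$ satisfies $\binom{k_d}{d} \leq c < \binom{k_d+1}{d}$, which determines $k_d$ as the largest integer with $\binom{k_d}{d}\leq c$; the residual $c' = c - \binom{k_d}{d}$ then lies strictly below $\binom{k_d}{d-1}$, so its leading index is forced to be $< k_d$, and a downward induction on $d$ (base $d=1$, where either $c'=0$ or $k_1 = c'$ is immediate) shows the remaining terms are uniquely determined as well.

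The main obstacle is the extremal inequality above, and more precisely getting \emph{strictness} out of it; once that is in hand the rest is bookkeeping. The points that need a little care are the boundary conditions: that $k_d \geq d$ so that no binomial coefficient silently vanishes, that the recursion terminates correctly either when the residual reaches $0$ (so $\delta = d$) or at $d = 1$, and that the constraint $k_\delta \geq \delta > 0$ is maintained throughout — all of which fall out of the induction on $d$ once the inequality is available.
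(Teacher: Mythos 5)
Your proof is correct. The existence half is essentially the same greedy construction as the paper's (peel off the largest $\binom{k_d}{d}$ not exceeding $c$ and recurse), the only cosmetic differences being that you induct on $d$ where the paper inducts on $c$, and that you obtain $k_{d-1}<k_d$ directly from Pascal's rule via $c'<\binom{k_d+1}{d}-\binom{k_d}{d}=\binom{k_d}{d-1}$ where the paper argues by contradiction with the maximality of $k_d$. The uniqueness half, however, is genuinely different. The paper compares two hypothetical expansions, strips off the common leading terms, and chases inequalities between the residuals using the inductive hypothesis; you instead prove the self-contained extremal bound $\sum_{i=\delta}^{d}\binom{k_i}{i}<\binom{k_d+1}{d}$ by the hockey-stick identity, with strictness supplied by the omitted $i=0$ term (which is precisely where $\delta\geq 1$ enters). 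That lemma sandwiches $c$ in $\bigl[\binom{k_d}{d},\binom{k_d+1}{d}\bigr)$ and so forces the leading index of \emph{any} admissible expansion to be the greedy one, after which uniqueness is a clean downward recursion on $d$. Your route is the standard textbook treatment of the Macaulay representation: it concentrates all the combinatorial content in one sharp inequality and sidesteps the inequality-chasing in the paper's comparison of two expansions --- which, as printed, needs repair at the step $c-\binom{k_d}{d}=\binom{k_d-1}{d-1}+\ldots\geq\binom{k_d-1}{d-1}$, since the expansion of $c-\binom{k_d}{d}$ actually begins with $\binom{k_{d-1}}{d-1}$ and $k_{d-1}$ may be well below $k_d-1$ (one should instead pass through $c-\binom{k_d'}{d}\geq c-\binom{k_d-1}{d}$ and Pascal's rule). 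The paper's argument, in exchange, requires no auxiliary identity beyond Pascal's rule. Both strategies yield complete proofs, and yours is arguably the more robust of the two.
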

\begin{proof}
We proceed by induction on $c$.

Say $c = 1$. Then for any $d > 0$
\begin{equation}
\label{eq:1}
    1 = \binom{d}{d}.
\end{equation}
Since for all $a>d$ we have $\binom{a}{d} > 1$ it is clear that \eqref{eq:1} is the unique such expression.

Say $c > 1$. First we prove existence of such an expression. Let $k_d$ denote the largest integer such that
\begin{equation}
\label{eq:greater_than}
    c \geq \binom{k_d}{d}.
\end{equation}
If \eqref{eq:greater_than} holds to equality then we are done, so assume $c - \binom{k_d}{d} > 0$ which by the inductive hypothesis implies there exists unique $k_{d-1} > \ldots > k_{\delta} > 0$ such that
\begin{equation}
\label{eq:inductive}
    c - \binom{k_d}{d} = \binom{k_{d-1}}{d-1} + \ldots + \binom{k_{\delta}}{\delta}.
\end{equation}
We must show that $k_d > k_{d-1}$. Suppose to the contrary that $k_d \leq k_{d-1}$. Then
\begin{equation}
    \binom{k_d}{d-1} \leq \binom{k_{d-1}}{d-1}
\end{equation}
and so using \eqref{eq:inductive} we have
\begin{equation}
    c \geq \binom{k_{d-1}}{d-1} + \binom{k_d}{d} \geq \binom{k_{d}}{d-1} + \binom{k_d}{d} = \binom{k_{d}+1}{d}
\end{equation}
which contradicts maximality of $k_d$.

Now we prove uniqueness. Assume there were two expressions:
\begin{align*}
    c &= \binom{k_d}{d} + \binom{k_{d-1}}{d-1} + \ldots + \binom{k_\delta}{\delta}\\
    c &= \binom{k_d'}{d} + \binom{k_{d-1}'}{d-1} + \ldots + \binom{k_{\delta'}'}{\delta'}
\end{align*}
with $k_d > k_{d-1} > \ldots > k_\delta > 0, k_d' > k_{d-1}' > \ldots > k_{\delta'}' > 0$. Let $s \leq d$ be the greatest integer such that $k_s \neq k_s'$. By considering $c - \sum_{i = s}^d \binom{k_{i}}{i}$ in place of $c$ we may assume $s = d$.

Assume without loss of generality that $k_d' < k_d$. Since $k_d'$ is an integer we have $k_d' \leq k_d - 1$. By the inductive hypothesis, the expression
\begin{equation}
    c - \binom{k_d'}{d} = \binom{k_{d-1}'}{d-1} + \ldots + \binom{k_{\delta'}'}{\delta'}
\end{equation}
is the unique such, and so $k_{d-1}'$ is the maximal integer such that
\begin{equation}
    c - \binom{k_d'}{d} \geq \binom{k_{d-1}'}{d-1}.
\end{equation}
Since $k_d > k_d'$, we have:
\begin{equation}
    c - \binom{k_d'}{d} > c - \binom{k_d}{d} = \binom{k_{d}-1}{d-1} + \ldots + \binom{k_\delta}{\delta} \geq \binom{k_d-1}{d-1}
\end{equation}
and so $k_d - 1 \leq k_{d-1}'$. Thus, $k_d' \leq k_{d-1}'$, which is a contradiction.
\end{proof}

\begin{defn}
The \textbf{$d$-binomial expansion of $c$} is the unique expansion given by \eqref{eq:unique_expansion}. The \textbf{$d^{\text{th}}$ Macaulay difference set of $c$}, $M_d(c)$ is defined as the tuple
\begin{equation}
M_d(c) = (k_d - d, d_{d-1} - (d-1), \ldots, k_\delta - \delta).
\end{equation}
\end{defn}

We note that the data of the $d$-binomial expansion of $c$ is equivalent to that of the $d^{\text{th}}$ Macaulay difference set of $c$.

\begin{example}
The following is the 4-binomial expansion of $27$:
\begin{equation}
27 = \binom{6}{4} + \binom{5}{3} + \binom{2}{2} + \binom{1}{1}.
\end{equation}
The $4^{\text{th}}$ Macaulay difference set of 27 is $(2,2,0,0)$.
\end{example}

\begin{defn}
Let $c > 0, d > 0$, and let $k_d > k_{d-1} > \ldots > k_\delta \geq \delta > 0$ be the integers involved in the $d$-binomial expansion of $c$ as in Proposition \ref{prop:unique_expansion}. Define the following natural number:
\begin{equation}
c^{\langle d \rangle} = \binom{k_d + 1}{d+ 1} + \binom{k_{d-1} + 1}{d} + \ldots + \binom{k_{\delta} + 1}{\delta + 1}.
\end{equation}
\end{defn}

\begin{remark}
\label{rmk:obvious}
The $d^{\text{th}}$ Macaulay difference set of $c$ and the $(d+1)^{\text{th}}$ Macaulay difference set of $c^{\langle d \rangle}$ are equal.
\end{remark}

\begin{proposition}
\label{prop:ladder}
Fix $n > 0$ and a homogeneous ideal $I \subseteq \mathbbm{k}[x_0, \ldots, x_n]$. Let $h$ be the Hilbert function of $I$. There exists an integer $j$ such that for all $d \geq j$ we have
\begin{equation}
\label{eq:ladder}
h(d+1) = h(d)^{\langle d \rangle}.
\end{equation}
\end{proposition}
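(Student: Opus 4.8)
The plan is to combine Macaulay's theorem on Hilbert functions with the eventual-polynomiality of the Hilbert function. Macaulay's characterization of Hilbert functions of homogeneous ideals states that $h$ is the Hilbert function of some homogeneous ideal in $\mathbbm{k}[x_0,\ldots,x_n]$ if and only if $h(0)=1$ and $h(d+1)\le h(d)^{\langle d\rangle}$ for all $d\ge 1$. So the inequality $h(d+1)\le h(d)^{\langle d\rangle}$ always holds; the content of the proposition is that it becomes an equality for $d$ large. Thus I would first invoke Macaulay's theorem to reduce the claim to showing $h(d+1)\ge h(d)^{\langle d\rangle}$ for $d\gg 0$.

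For that reverse inequality I would use the fact that for a homogeneous ideal $I\subseteq \mathbbm{k}[x_0,\ldots,x_n]$, the Hilbert function $h(d)=\dim_{\mathbbm{k}}(S_d/I_d)$ agrees, for all $d$ at least some integer $j_0$ (the Castelnuovo–Mumford regularity, or simply "the Gotzmann/Serre bound"), with a polynomial $P(d)$ in $d$ of degree equal to $\dim(\operatorname{Proj}(S/I))$, the Hilbert polynomial. I would then argue combinatorially: write $P$ in the binomial basis $P(d)=\sum_{i}\binom{d+a_i}{?}$-style expression, and observe that when $h(d)=P(d)$ is a polynomial, its $d$-binomial expansion stabilizes so that the integers $k_d,k_{d-1},\ldots,k_\delta$ appearing in the expansion of $h(d)$ shift uniformly by $1$ when passing to $h(d+1)$; by Remark \ref{rmk:obvious} this uniform shift is exactly the statement that the $(d+1)$-th Macaulay difference set of $h(d)^{\langle d\rangle}$ equals the $d$-th Macaulay difference set of $h(d)$, which by the same remark forces $h(d+1)=h(d)^{\langle d\rangle}$ once the Macaulay difference set of $h(d+1)$ has also stabilized. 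Concretely: for $d\ge j_0$ both $h(d)$ and $h(d+1)$ are given by the same polynomial $P$, and a short computation shows that the $d$-binomial expansion of $P(d)$ and the $(d+1)$-binomial expansion of $P(d+1)$ have Macaulay difference sets that are eventually constant in $d$ and equal to each other; taking $j$ past this stabilization point and past $j_0$ gives the result.

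The main obstacle I expect is the purely combinatorial lemma that the Macaulay difference set of $P(d)$ (as a function of $d$, for a fixed polynomial $P$ taking nonnegative integer values on large integers) is eventually independent of $d$. The cleanest route is probably to write $P(d)=\binom{d+e}{e}-\binom{d+e-b_1}{e}+\cdots$ or, better, to use the standard fact that any such $P$ can be written uniquely as $P(d)=\sum_{i=1}^{m}\binom{d+c_i}{i}$ with $c_m\ge c_{m-1}\ge\cdots$; then for $d$ large the terms $\binom{d+c_i}{i}$ are already in strictly decreasing order of the "top" entry, so this \emph{is} the $d$-binomial expansion, the $k_i$'s are $d+c_i$ (reindexed), and the difference set entries $k_i - i = d + c_i - i$ — wait, these still depend on $d$. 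The resolution is that the difference set entries that matter are the \emph{differences} $k_i - i$ compared across consecutive $i$, and in the expansion for $P(d+1)$ every $k_i$ increases by exactly $1$ while the index $i$ is unchanged, so $M_{d+1}(P(d+1))$ and $M_d(P(d))$ are literally equal — this is precisely Remark \ref{rmk:obvious} read in reverse, and hence $h(d+1)=h(d)^{\langle d\rangle}$. I would be careful to check that the strict-decrease hypotheses ($k_d > k_{d-1} > \cdots$ and $k_\delta \ge \delta$) in Proposition \ref{prop:unique_expansion} are genuinely satisfied for $d$ large, which is where the threshold $j$ comes from, and to cite a precise reference (e.g. Bruns–Herzog or Eisenbud) for both Macaulay's theorem and the polynomiality of $h(d)$ for $d\gg 0$.
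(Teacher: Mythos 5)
The paper does not actually prove this proposition --- it simply cites Gotzmann --- so your sketch is doing genuinely more work, and it is essentially the standard argument underlying that citation. Your overall strategy is sound: reduce to $d$ large enough that $h(d)$ equals the Hilbert polynomial $P(d)$, then show that the $d$-binomial expansion of $P(d)$ becomes the $(d+1)$-binomial expansion of $P(d+1)$ by shifting every $k$ and every position by one, which preserves the Macaulay difference set and hence, by the uniqueness in Proposition \ref{prop:unique_expansion} together with Remark \ref{rmk:obvious}, forces $P(d+1)=P(d)^{\langle d\rangle}$. Note that once you have this, the opening appeal to Macaulay's inequality $h(d+1)\le h(d)^{\langle d\rangle}$ is superfluous: the shift argument gives equality directly. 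The one real imprecision is the normal form you propose for $P$: a Hilbert polynomial cannot in general be written as $\sum_{i=1}^{m}\binom{d+c_i}{i}$ with exactly one term per lower index $i$ (already $P(d)=2d+1$, the paper's own Example \ref{ex:Gotmann_number_ex}, requires two degree-one terms). The correct statement (Macaulay/Gotzmann) is that $P(d)=\sum_{i=1}^{s}\binom{d+a_i-i+1}{a_i}$ for a unique weakly decreasing sequence $a_1\ge a_2\ge\cdots\ge a_s\ge 0$; rewriting $\binom{d+a_i-i+1}{a_i}=\binom{d+a_i-i+1}{d-i+1}$ exhibits this, for all $d\ge s$, as precisely the $d$-binomial expansion with $k_{d-i+1}=(d-i+1)+a_i$, so the Macaulay difference set is $(a_1,\ldots,a_s)$ independently of $d$ and your shift argument closes. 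With that substitution your proof is complete, modulo the degenerate case where $h(d)=0$ for large $d$, in which $c^{\langle d\rangle}$ is not even defined by Proposition \ref{prop:unique_expansion} --- a gap the paper's own statement silently shares.
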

\begin{proof}
See \cite[Section 2]{Gotzmann}.
\end{proof}
\begin{cor}
Let $I \subseteq \mathbbm{k}[x_0, \ldots, x_n]$ be homogeneous with Hilbert function $h$. Let $j$ be the integer such that for all $d \geq j$ we have \eqref{eq:ladder}. Then for all $d \geq j$ the $d^{\text{th}}$ Macaulay difference set of $h(d)$ is equal to the $j^{\text{th}}$ Macaulay difference set of $h(j)$.
\end{cor}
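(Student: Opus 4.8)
The plan is to run a short induction on $d \geq j$, using Remark \ref{rmk:obvious} as the only structural input together with the ladder identity of Proposition \ref{prop:ladder}. The base case $d = j$ is vacuous: the $j^{\text{th}}$ Macaulay difference set of $h(j)$ equals itself.

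For the inductive step I would assume that for some $d \geq j$ the $d^{\text{th}}$ Macaulay difference set of $h(d)$ coincides with the $j^{\text{th}}$ Macaulay difference set of $h(j)$. Since $d \geq j$, Proposition \ref{prop:ladder} gives $h(d+1) = h(d)^{\langle d \rangle}$. Applying Remark \ref{rmk:obvious} with $c = h(d)$, the $(d+1)^{\text{th}}$ Macaulay difference set of $h(d)^{\langle d \rangle}$ equals the $d^{\text{th}}$ Macaulay difference set of $h(d)$. Rewriting $h(d)^{\langle d \rangle} = h(d+1)$ and then invoking the inductive hypothesis, the $(d+1)^{\text{th}}$ Macaulay difference set of $h(d+1)$ equals the $j^{\text{th}}$ Macaulay difference set of $h(j)$, which closes the induction.

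The only point requiring care is that the $d$-binomial expansion, and hence the $d^{\text{th}}$ Macaulay difference set, is defined in Proposition \ref{prop:unique_expansion} only for a strictly positive argument, whereas $h(d)$ can vanish for large $d$ — precisely when $I$ contains a power of the irrelevant ideal, i.e. when $V(I) = \varnothing$. In that degenerate situation the statement is either vacuous beyond the first degree at which $h$ vanishes, or one adopts the convention that the $d$-binomial expansion of $0$ is the empty sum and $0^{\langle d \rangle} = 0$, under which both Remark \ref{rmk:obvious} and the induction above persist verbatim. I expect no real obstacle: essentially all the combinatorial content is already carried by Remark \ref{rmk:obvious} and Proposition \ref{prop:ladder}, so once those are granted the corollary is a one-line induction, with only the $h(d) = 0$ edge case meriting a remark.
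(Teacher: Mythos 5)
Your proof is correct and takes essentially the same approach as the paper, which chains Proposition \ref{prop:ladder} and Remark \ref{rmk:obvious} to obtain $M_{d+1}(h(d+1)) = M_{d+1}(h(d)^{\langle d \rangle}) = M_d(h(d))$ for all $d \geq j$ and iterates. Your additional remark about the degenerate case $h(d) = 0$ is a reasonable point of care that the paper leaves implicit.
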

\begin{proof}
For $d \geq j$ we have
\begin{equation}
    M_{d+1}(h(d+1)) = M_{d+1}(h(d)^{\langle d \rangle}) = M_{d}(h(d))
\end{equation}
where the first equality holds by Proposition \ref{prop:ladder} and the second by Remark \ref{rmk:obvious}.
\end{proof}

\begin{defn}
Let $I \subseteq \mathbbm{k}[x_0, \ldots, x_n]$ be a homogeneous ideal. The \textbf{Gotzmann number $G(I)$ of $I \subseteq \mathbbm{k}[x_0, \ldots, x_n]$} is the number of elements in the eventually constant $d^{\text{th}}$ Macaulay difference set of $h(d)$.
\end{defn}

\begin{example}
\label{ex:Gotmann_number_ex}
Consider the Segre embedding (see Corollary \cite[Corollary 3.7]{troiani2024phd} for a reminder) $\operatorname{Seg}: \bb{P}^1 \times \bb{P}^1 \lto \bb{P}^3$ and the canonical closed immersion of the diagonal $\iota: \Delta \lto \bb{P}^1 \times \bb{P}^1$. Since these are both closed immersions, so is their composite $\operatorname{Seg}\iota: \Delta \lto \bb{P}^3$. The image of this closed immersion corresponds uniquely to a saturated homogeneous ideal $I \subseteq S = \mathbbm{k}[Z_{00}, Z_{01}, Z_{10}, Z_{11}]$. This ideal $I$ is
\begin{equation}
I = (Z_{01} - Z_{10}, Z_{00}Z_{11} - Z_{01}Z_{10}).
\end{equation}
We calculate the Gotzmann number of $I \subseteq S$. First we calculate the Hilbert function. We can calculate the Hilbert function of $I \subseteq S$ directly using a minimal free graded resolution of $S/I$. Let $f = Z_{01} - Z_{10}, g = Z_{00}Z_{11} - Z_{01}Z_{10}$. Then we have the following minimal graded free resolution, where for $d > 0$ the notation $S(d)$ denotes the graded $\mathbbm{k}$-algebra $S$ with degree shifted by $d$:
\[\begin{tikzcd}[column sep = large, ampersand replacement = \&]
    0 \& {S(-3)} \& {S(-1) \oplus S(-2)} \& S \& {S/I} \& 0.
    \arrow[from=1-1, to=1-2]
    \arrow["{\begin{pmatrix}g\\-f\end{pmatrix}}", from=1-2, to=1-3]
    \arrow["{\begin{pmatrix}f&g\end{pmatrix}}", from=1-3, to=1-4]
    \arrow[from=1-4, to=1-5]
    \arrow[from=1-5, to=1-6]
\end{tikzcd}\]
Thus for any $d \geq 0$:
\begin{align*}
0 &= \operatorname{dim}S(-3)_d - \operatorname{dim}S(-1)_d - \operatorname{dim}S(-2)_d + \operatorname{dim}S_d - \operatorname{dim}(S/I)_d\\
&= \operatorname{dim}S_{d-3} - \operatorname{dim}S_{d-1} - \operatorname{dim}S_{d-2} + \operatorname{dim}S_d - \operatorname{dim}(S/I)_d.
\end{align*}
In general, if $S' = \mathbbm{k}[x_1, \ldots, x_n]$ then the dimension of $S'_d$ is the number of monomials in $n$ variables of degree $d$. This number is
\begin{equation}
\operatorname{dim}S'_d = \binom{n + d - 1}{d}.
\end{equation}
Here, $n = 4$, so:
\begin{equation}
\operatorname{dim}(S/I)_d = \binom{d}{d-3} - \binom{d+2}{d-1} - \binom{d+1}{d-2} + \binom{d+3}{d}
\end{equation}
which is equal to $2d+1$. So, the Hilbert function of $I$ is $h: \bb{N} \lto \bb{N}, h(d) = 2d+1$. Notice that
\begin{equation}
2d + 1 = \binom{d + 1}{d} + \binom{d}{d-1}.
\end{equation}
By uniqueness of such expressions (Proposition \ref{prop:unique_expansion}) it follows that the Macaulay difference set is $(1,1)$ and the Gotzmann number $G(I)$ of $I$ is 2.
\end{example}

\begin{defn}
Let $D \subseteq \bb{N}$. We say that $D$ is \textbf{supportive} if the canonical morphism $H_S^h \lto H_{S_D}^h$ is a closed immersion. It is $\textbf{very supportive}$ if $H_S^h \lto H_{S_D}^h$ is an isomorphism (see \cite[Corollary 3.4]{hilb}).
\end{defn}

For the remainder of this Section let $S = \mathbbm{k}[x_0, \ldots, x_n]$ for some fixed $n > 0$.

\begin{proposition}
\label{prop:big_assumption}
Let $I \subseteq S$ be a homogeneous ideal with Hilbert function $h$. Let $G(I)$ denote the Gotzmann number of $I \subseteq S$. Then the set $\{G(I)\}$ is supportive and the set $\{G(I),G(I)+1\}$ is very supportive.
\end{proposition}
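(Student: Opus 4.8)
The plan is to connect the combinatorially-defined Gotzmann number $G(I)$ to the algebraic input required by the Hilbert scheme construction in Theorem~\ref{thm:Hilbert_scheme}, specifically to the choice of a supportive subset $D \subseteq \bb{N}$. The key external fact is Gotzmann's persistence/regularity theorem: if $h$ is the Hilbert function of a homogeneous ideal $I \subseteq S = \mathbbm{k}[x_0,\ldots,x_n]$, then the saturation of $I$ is generated in degrees $\le G(I)$ and, moreover, $H^h_S$ can be cut out inside the relevant Grassmannian using only the data in a single degree $G(I)$ (this is the content made precise in \cite[Section 2]{Gotzmann} together with \cite[Corollary 3.4]{hilb}). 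Concretely, I would argue that $D = \{G(I)\}$ is supportive because the universal flat family over $H^h_{S_D}$ determines, in each higher degree $d$, a unique flat extension whose rank is forced by Proposition~\ref{prop:ladder} to equal $h(d)$; this uniqueness is exactly what makes $\operatorname{Res}_{S_D}\colon \underline{H^h_S} \to \underline{H^h_{S_D}}$ a monomorphism of functors with closed image, hence a closed immersion of representing schemes.

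For the second claim—that $\{G(I), G(I)+1\}$ is very supportive—I would invoke \cite[Corollary 3.4]{hilb} as flagged in the definition, but the substance is the following: knowing an $F$-submodule in the two consecutive degrees $G(I)$ and $G(I)+1$ pins down the submodule in \emph{all} degrees, because multiplication $S_1 \otimes L_d \to L_{d+1}$ together with the fixed Hilbert function forces $L_{d+1}$ once $L_d$ is known (for $d \ge G(I)$, by Gotzmann persistence), and downward one uses that $I$ is recoverable from its truncation in high degrees up to saturation, which does not affect the scheme $\bb{X}(\pi)$ we care about. So $\operatorname{Res}_{S_D}$ is not merely injective on points but an isomorphism of functors, giving an isomorphism $H^h_S \cong H^h_{S_D}$.

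The sequence of steps I would carry out: (1) recall the precise statement of Gotzmann persistence from Proposition~\ref{prop:ladder} and its corollary, and note that the Gotzmann number $G(I)$ is by definition the stabilization index packaged as the cardinality of the eventually-constant Macaulay difference set; (2) check that the hypotheses of Theorem~\ref{thm:Hilbert_scheme} are met with $T = S$, with $N$ the span of monomials of degree $\le G(I)+1$ (finitely generated, generates $S$ as an $F$-module), and $M$ the span in degree $G(I)$, using Gotzmann to verify that $M$ generates $(K\otimes S)/L$ for any field $K$ and any $L \in \underline{H^h_S}(K)$; (3) deduce from the construction in that theorem that $H^h_S$ sits inside a Grassmannian built from degrees in $D$, which gives supportiveness of $\{G(I)\}$ directly and very-supportiveness of $\{G(I),G(I)+1\}$ via \cite[Corollary 3.4]{hilb}.

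The main obstacle I anticipate is step~(2): verifying the third bullet of Theorem~\ref{thm:Hilbert_scheme}, namely that the degree-$G(I)$ piece $M$ generates the quotient $(K \otimes S)/L$ as a $K$-module for \emph{every} field extension $K$ and \emph{every} $L$ with the prescribed Hilbert function—not just for the saturated ideals but for all members of the Hilbert functor. This requires the full strength of Gotzmann's regularity bound (that $G(I)$ bounds the Castelnuovo–Mumford regularity of every ideal with Hilbert function $h$, uniformly), and care is needed because a general $L \in \underline{H^h_S}(K)$ need not be an ideal unless one has already restricted the operator set $F$ appropriately, as in Example~\ref{ex:algebra_F_module}. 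Once that uniform regularity statement is in hand, supportiveness and very-supportiveness follow formally from the structure of the proof of Theorem~\ref{thm:Hilbert_scheme}.
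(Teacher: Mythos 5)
The paper does not actually prove this proposition; its ``proof'' is the single citation \cite[Proposition 4.2]{hilb}. Your proposal is therefore attempting something the authors deliberately outsourced, and your overall strategy --- Gotzmann persistence and regularity, plus the Grassmannian embedding coming from the hypotheses of Theorem \ref{thm:Hilbert_scheme} --- is indeed the strategy of the cited source. That said, as it stands your argument has a genuine gap, and you have correctly located it yourself: the third hypothesis of Theorem \ref{thm:Hilbert_scheme}, that the degree-$G(I)$ piece $M$ generates $(K \otimes S)/L$ for \emph{every} field $K$ and \emph{every} $L \in \underline{H_S^h}(K)$, is exactly the uniform Gotzmann regularity statement, and nothing in your sketch establishes it. Since everything downstream (the closed immersion into $G_r^{h(G(I))}$, hence supportiveness of $\{G(I)\}$) is conditional on this, the proposal defers rather than proves the substantive content of the proposition.

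There is also a flaw in your treatment of the degrees below $G(I)$. The functor $\underline{H_S^h}$ imposes the local-freeness-of-rank-$h(d)$ condition in \emph{all} degrees $d$, and its elements need not be saturated; so the claim that low-degree behaviour ``is recoverable \ldots up to saturation, which does not affect the scheme $\bb{X}(\pi)$'' is a non sequitur --- the proposition is about the Hilbert scheme itself, not about its use in the model, and saturating genuinely changes the element of the functor. The correct mechanism is that for $d < G(I)$ one has $L_d \subseteq \{\, s \in R \otimes S_d \mid S_{G(I)-d}\, s \subseteq L_{G(I)} \,\}$ and a rank comparison forces equality where it holds; the locus where the resulting modules have the prescribed coranks is closed but not all of $H_{S_D}^h$, which is precisely why $\{G(I)\}$ yields only a closed immersion while $\{G(I), G(I)+1\}$ (where persistence can be imposed as a compatibility between the two degrees) yields an isomorphism. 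Your upward argument via $S_1 L_d \subseteq L_{d+1}$ and rank-forcing is the right idea, but it too needs the rank comparison to be argued locally on $\operatorname{Spec} R$, not just over fields. In short: right roadmap, but the two load-bearing steps (uniform regularity, and the low-degree determination) are asserted rather than proved.
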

\begin{proof}
See \cite[Proposition 4.2]{hilb}.
\end{proof}

\begin{cor}
\label{cor:Grothendieck_embedding}
Let $I \subseteq S$ be a homogeneous ideal with Hilbert function $h$. Let $G(I)$ denote the Gotzmann number of $I \subseteq S$ and let $D = \{G(I)\}$. Denote by $r,s$ the following integers
\begin{equation}
r = \binom{n + G(I) - 1}{G(I)},\quad s = \binom{r}{h(G(I))}.
\end{equation}
Then there exists a sequence of closed immersions
\begin{equation}
\label{eq:big_embedding}
H_S^h \lto H_{S_{D}}^h \lto G_{r}^{h(G(I))} \lto \bb{P}^{s-1}.
\end{equation}
In particular, $H_S^h$ is projective.
\end{cor}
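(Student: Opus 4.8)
The plan is to assemble the displayed chain of closed immersions \eqref{eq:big_embedding} from results already recorded in the excerpt, and then conclude projectivity from the fact that a composition of closed immersions is a closed immersion together with the projectivity of $\bb{P}^{s-1}$.

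First I would identify the three arrows. For the first arrow $H_S^h \lto H_{S_D}^h$ with $D = \{G(I)\}$: by Proposition \ref{prop:big_assumption}, the singleton $\{G(I)\}$ is supportive, which by the Definition of supportive says precisely that the canonical morphism $H_S^h \lto H_{S_D}^h$ is a closed immersion. For the second arrow $H_{S_D}^h \lto G_r^{h(G(I))}$: since $D$ is a single degree $d = G(I)$, the module $S_D = S_d$ is concentrated in one degree, so an $F$-submodule of $R \otimes S_d$ with locally free quotient of rank $h(d)$ is just an $R$-submodule $L_d \subseteq R \otimes S_d \cong R^r$ (using $\dim_{\mathbbm{k}} S_d = \binom{n + d - 1}{d} = r$) with $(R^r/L_d)$ locally free of rank $h(d)$; that is exactly the data parametrised by the Grassmann functor $\underline{G_r^{h(G(I))}}$, so $\underline{H_{S_D}^h} \cong \underline{G_r^{h(G(I))}}$ and hence $H_{S_D}^h \cong G_r^{h(G(I))}$ (in particular there is a closed immersion between them). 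One should double-check here that the rank inequality $0 < h(G(I)) < r$ needed to even form the Grassmann scheme holds; this is where a small argument is needed — $h(d) \le \dim S_d = r$ always, and for $d = G(I) \ge 1$ one expects strict inequality unless $I_d = 0$, which would have to be handled or assumed away. For the third arrow $G_r^{h(G(I))} \lto \bb{P}^{s-1}$ with $s = \binom{r}{h(G(I))}$: this is exactly the Plücker embedding supplied by Proposition \ref{prop:Grassmann_representable}, which states that $\underline{G_n^k}$ is represented by a closed subscheme of $\bb{P}^{\binom{n}{k}-1}$.

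Having produced the chain, I would then invoke the standard fact that closed immersions are closed under composition, so the composite $H_S^h \lto \bb{P}^{s-1}$ is a closed immersion; since $\bb{P}^{s-1}$ is projective over $\operatorname{Spec}\mathbbm{k}$ and a closed subscheme of a projective scheme is projective, $H_S^h$ is projective. I would also note that this argument a priori requires knowing $H_S^h$ exists as a scheme in the first place, which follows from Theorem \ref{thm:Hilbert_scheme} (or from the representability of $\underline{H_{S_D}^h} \cong \underline{G_r^{h(G(I))}}$ together with the fact that the first arrow realises $H_S^h$ as a closed subscheme of it).

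I expect the main obstacle to be the second arrow: verifying carefully that for a single-degree truncation the Hilbert functor $\underline{H_{S_D}^h}$ is literally the Grassmann functor $\underline{G_r^{h(G(I))}}$ — that there are no residual $F$-module (operator) constraints once $D$ is a single degree, that the identification $R \otimes S_d \cong R^r$ is natural in $R$, and that the numerology $r = \binom{n+G(I)-1}{G(I)}$, $s = \binom{r}{h(G(I))}$ matches the dimension formulas from Proposition \ref{prop:Grassmann_representable}. The supportiveness input (first arrow) and the Plücker embedding (third arrow) are quoted black boxes, so the only genuine content to write out is this middle identification and the bookkeeping that $0 < h(G(I)) < r$ so that $G_r^{h(G(I))}$ is defined.
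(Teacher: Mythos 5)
Your proposal is correct and follows exactly the route the paper intends: the paper states this corollary without a written proof, but the intended argument is precisely your chain — supportiveness of $\{G(I)\}$ (Proposition \ref{prop:big_assumption}) for the first arrow, the identification of the single-degree Hilbert functor with the Grassmann functor for the second, the Pl\"ucker embedding of Proposition \ref{prop:Grassmann_representable} for the third, and closure of closed immersions under composition. Your flagged caveats (the degenerate cases $h(G(I)) \in \{0, r\}$ falling outside the stated hypothesis $0 < k < n$ of the Grassmann scheme, and the bookkeeping of $r = \dim_{\mathbbm{k}} S_{G(I)}$) are real but are glossed over by the paper as well, so they do not constitute a divergence from its argument.
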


In Section \ref{sec:model} we will need to fixed a choice of closed immersion of the Hilbert scheme $H_S^h$ into projective space $\bb{P}^{s-1}$, for each polynomial ring $S = \mathbbm{k}[x_0, \ldots, x_n]$ and Hilbert function $h: \bb{N} \lto \bb{N}$, where $s$ is as defined in Corollary \ref{cor:Grothendieck_embedding}. We fix once and for all such a choice and refer to this as the \textbf{Grothendieck immersion}.

\begin{remark}
\label{rmk:equations}
We only consider shallow proofs in this paper, for which the details of the immersion \eqref{eq:big_embedding} are not necessary, though we will use that $H_S^h$ is projective. In order to extend the model of Section \ref{sec:model} to all of MELL it seems necessary to prove certain properties of at least one of the sets of equations which define an ideal $I$ such that $\operatorname{Proj}(S/I) \cong H_S^h$.
\end{remark}

\section{Exponentials}
\label{sec:model}

\begin{defn}
\label{def:Hilbert_functions_set}
Let $\mathcal{H}$ denote the set of all Hilbert functions $h: \bb{N} \lto \bb{N}$.
\end{defn}

\begin{defn}
\label{def:formula_schemes}
Let $A$ be a shallow formula. The \textbf{scheme of $A$}, $\bb{S}(A)$, is defined inductively to be a disjoint union of projective spaces as follows:
\begin{itemize}
	\item Say $A = (X,x)$ is atomic. Then $\bb{S}(A) = \bb{P}^1$.
	\item Say $A = A_1 \otimes A_2$ and $\bb{S}(A_1) = \coprod_{i \in I} \bb{P}^{r_i}, \bb{S}(A_2) = \coprod_{j \in J} \bb{P}^{s_j}$. Recall that for each pair $(i,j) \in I \times J$ there is the Segre embedding: $\bb{P}^{r_i} \times \bb{P}^{s_j} \lto \bb{P}^{(r_i+1)(s_j+1)-1}$, see \cite[Corollary 3.7]{troiani2024phd} for a reminder. Define
	\begin{equation}
		\bb{S}(A) = \coprod_{i \in I}\coprod_{j \in J}\bb{P}^{(r_i + 1)(s_j + 1) - 1}.
	\end{equation}
	\item Say $A = !B$ with $A$ linear. Recall that for each $h \in \mathcal{H}$ we have the Grothendieck immersion $H_S^h \lto \bb{P}^{s_h}$, for some integer $s_h$. Define
	\begin{equation}
		\bb{S}(A) = \coprod_{h \in \mathcal{H}} \bb{P}^{s_h}.
	\end{equation}
\end{itemize}
\end{defn}

\begin{defn}
    Let $e$ be an edge in a proof net. We denote by $A_e$ the formula labelling $e$.
\end{defn}

\begin{defn}
\label{def:MELL_interpretation}
    The \textbf{ambient scheme of $\pi$}, denoted $\bb{S}(\pi)$, is the product of all schemes of formulas ranging over all edges $e$ in $\pi$. That is, let $\mathcal{E}_\pi$ denote the set of edges of $\pi$ then 
    \begin{equation}
        \bb{S}(\pi) = \prod_{e \in \mathcal{E}_\pi} \bb{S}(A_e).
    \end{equation}
\end{defn}

We now define for each shallow proof $\pi$ an associated scheme $\bb{X}(\pi)$, along with a morphism of schemes $\iota_\pi: \bb{X}(\pi) \lto \bb{S}(\pi)$ which when restricted to any connected component of $\bb{X}(\pi)$ is a closed immersion. The scheme $\bb{X}(\pi)$ will be defined by associating to each link $l$ of $\pi$ a set of edges $\mathcal{L}_l$ of $\pi$ and a locally closed subscheme $\bb{X}(l)$ of $\prod_{e \in \mathcal{L}_l}\bb{S}(A_e)$.

\begin{defn}
\label{def:proof_interpretation}
For every pair of formulas $A,B$, write $\bb{S}(A) = \coprod_{i \in I}\bb{P}^{r_i}, \bb{S}(B) = \coprod_{j \in J}\bb{P}^{s_j}$ and fix an isomorphism
\begin{equation}
\phi_{\operatorname{M}}: \bb{S}(A) \times \bb{S}(B) \lto \coprod_{i \in I}\coprod_{j \in J}(\bb{P}^{r_i} \times \bb{P}^{s_j}).
\end{equation}

For any Hilbert function $h \in \mathcal{H}$ let $s_{h} > 0$ be such that $\bb{S}(?A) = \coprod_{h \in \mathcal{H}}\bb{P}^{s_h}$, fix an isomorphism
\begin{equation}
\phi_{\operatorname{D}}: \bb{S}(?A) \times \bb{S}(A) \lto \coprod_{h \in \mathcal{H}}\big(\bb{P}^{s_{h}} \times \bb{S}(A)\big).
\end{equation}

For every sequence $i = 1, \ldots, n$, every set of formulas $?A_1, \ldots, ?A_n$, with $\bb{S}(?A_i) = \coprod_{h_i \in \mathcal{H}}\bb{P}^{s_{h_i}}$, and every linear formula $B$ we fix an isomorphism
\begin{equation}
\label{eq:first_promotion_iso}
\phi_{\operatorname{P}^1}: \prod_{i = 1}^n \bb{S}(?A_i) \times \bb{S}(B) \lto \coprod_{\mathbf{h} \in \mathcal{H}^n}\prod_{i=1}^n\big(\bb{P}^{s_{h_i}} \times \bb{S}(B)\big).
\end{equation}
For $\bb{S}(!B) = \coprod_{h \in \mathcal{H}}\bb{P}^{s_h}$ we fix an isomorphism
\begin{equation}
\phi_{\operatorname{P}^2}: \prod_{i = 1}^n \bb{S}(?A_i) \times \bb{S}(!B) \lto \coprod_{\mathbf{h} \in \mathcal{H}^n}\coprod_{h \in \mathcal{H}}\prod_{i=1}^n\big(\bb{P}^{s_{h_i}} \times \bb{P}^{s_h}\big).
\end{equation}
The $\operatorname{M}, \operatorname{D}, \operatorname{P}$ stand respectively for ``Multiplicative", ``Dereliction", and ``Promotion".

Let $l$ be a link of a shallow proof net $\pi$. If $l$ is not a Promotion-link then let $\mathcal{L}_l$ denote the set of edges incident to $l$. If $l$ is a Promotion-link then let $\mathcal{L}_l$ denote the set of edges which are conclusions to the Promotion-link and all associated Pax-links. We define a closed subscheme $\bb{X}(l)$ of $\prod_{e \in \mathcal{L}_l}\bb{S}(A_e)$ along with a morphism
\begin{equation}
\iota_l: \bb{X}(l) \lto \prod_{e \in \mathcal{L}_l}\bb{S}(A_e).
\end{equation}

\textbf{Conclusion-link}
\[\begin{tikzcd}
	\vdots \\
	{\operatorname{c}}
	\arrow["A"', from=1-1, to=2-1]
\end{tikzcd}\]
We define $\bb{X}(l)$ to be the full subscheme $\bb{S}(A)$ of $\bb{S}(A)$ and take $\iota_l$ to be the identity morphism
\begin{equation}
\iota_{l}: \bb{X}(l) = \bb{S}(A) \lto \bb{S}(A).
\end{equation}

\textbf{Axiom- or Cut-link.}
\[\begin{tikzcd}[column sep = small]
	& {\operatorname{Ax}} && \vdots && \vdots \\
	\vdots && \vdots && {\operatorname{Cut}}
	\arrow["{\neg A}"', curve={height=12pt}, from=1-2, to=2-1]
	\arrow["A", curve={height=-12pt}, from=1-2, to=2-3]
	\arrow["{\neg A}"', curve={height=12pt}, from=1-4, to=2-5]
	\arrow["A", curve={height=-12pt}, from=1-6, to=2-5]
\end{tikzcd}\]
In both cases, we use the fact that $\bb{S}(\neg A) = \bb{S}(A)$. We define $\bb{X}(l)$ to be the diagonal $\Delta_{\bb{S}(A)}$ and define $\iota_l$ to be the canonical morphism
\begin{equation}
\iota_l: \bb{X}(l) = \Delta_{\bb{S}(A)} \lto \bb{S}(\neg A) \times \bb{S}(A).
\end{equation}

\textbf{Tensor- or Par-link}.
\[\begin{tikzcd}[column sep = small]
	\vdots && \vdots & \vdots && \vdots \\
	& \otimes &&& \parr \\
	& \vdots &&& \vdots
	\arrow["A"', curve={height=12pt}, from=1-1, to=2-2]
	\arrow["B", curve={height=-12pt}, from=1-3, to=2-2]
	\arrow["{A\otimes B}", from=2-2, to=3-2]
	\arrow["A"', curve={height=12pt}, from=1-4, to=2-5]
	\arrow["B", curve={height=-12pt}, from=1-6, to=2-5]
	\arrow["{A \parr B}", from=2-5, to=3-5]
\end{tikzcd}\]
Let $\boxtimes \in \{\otimes, \parr\}$. Write $\bb{S}(A) = \coprod_{i \in I}\bb{P}^{r_i}, \bb{S}(B) = \coprod_{j \in J}\bb{P}^{s_j}$. For each pair $(i,j) \in I \times J$ there exists the Segre embedding
\begin{equation}
\bb{P}^{r_i} \times \bb{P}^{s_j} \lto \bb{P}^{(r_i + 1)(s_j + 1) - 1}.
\end{equation}
We compose with the canonical inclusion morphism to obtain
\begin{equation}
\bb{P}^{r_i} \times \bb{P}^{s_j} \lto \coprod_{i \in I}\coprod_{j \in J}\bb{P}^{(r_i + 1)(s_j + 1) - 1} = \bb{S}(A \boxtimes B).
\end{equation}
By the universal property of the coproduct this induces a morphism
\begin{equation}
\coprod_{i \in I}\coprod_{j \in J}\bb{P}^{r_i} \times \bb{P}^{s_j} \lto \bb{S}(A \boxtimes B)
\end{equation}
which we pre-compose with $\phi_{\operatorname{M}}^{-1}$ to obtain
\begin{equation}
f: \bb{S}(A) \times \bb{S}(B) \lto \bb{S}(A \boxtimes B).
\end{equation}
We take the graph $\Gamma_f$ of $f$ to be $\bb{X}(l)$ and the canonical inclusion to be $\iota_l$:
\begin{equation}
\iota_l: \bb{X}(l) = \Gamma_f \lto \bb{S}(A) \times \bb{S}(B) \times \bb{S}(A \boxtimes B).
\end{equation}

\textbf{Dereliction-link}.
\[\begin{tikzcd}
	\vdots \\
	{?} \\
	\vdots
	\arrow["A"', from=1-1, to=2-1]
	\arrow["{? A}"', from=2-1, to=3-1]
\end{tikzcd}
\]
We have assumed that $\pi$ is shallow and so $A$ is linear. Thus if $m$ denotes the number of unoriented atoms of $A$ then $\bb{S}(A) = \bb{P}^{2^m - 1}$. Let $S$ denote the graded $\mathbbm{k}$-algebra $\mathbbm{k}[x_0, \ldots, x_{2^m - 1}]$. For each $h \in \mathcal{H}$ there exists an integer $s_h$ such that $\bb{S}(?A) = \coprod_{h \in \mathcal{H}}\bb{P}^{s_h}$. Fix $h \in \mathcal{H}$. Let $U = \operatorname{Spec}R$ denote an open affine of the Hilbert scheme $H_{S}^h \subseteq \bb{P}^{r_s}$, and consider the bijection
\begin{equation}
\psi: \underline{H_{S}^h}(R) \cong \operatorname{Hom}_{\underline{\operatorname{Sch}}_{\mathbbm{k}}}(U, H_S^h)
\end{equation}
coming from representability of the functor $\underline{H_{S}^h}$ (Theorem \ref{thm:Hilbert_scheme}).

Associated to the inclusion $U \lto \underline{H_{S}^h}$ is an element $I \in H_S^h(R)$. This is a homogeneous ideal of $R \otimes S$ with Hilbert function $h$. This in turn corresponds to a closed immersion
\begin{equation}
\bb{U}_{U }= \operatorname{Proj}((R \otimes S)/I) \lto \operatorname{Proj}(R \otimes S) \cong \operatorname{Spec}R \times \bb{S}(A).
\end{equation}
By gluing along all open affines $U \subseteq H_{S}^h$ we obtain a closed subscheme
\begin{equation}
\label{eq:dereliction_embedding}
\iota: \bb{U}_h \lto H_S^h \times \bb{S}(A).
\end{equation}
We post-compose with the product of the Grothendieck embedding $H_S^h \lto \bb{P}^{s_h}$ and the identity $\operatorname{id}: \bb{S}(A) \lto \bb{S}(A)$:
\begin{equation}
\label{eq:universal_embedding}
\bb{U}_h \lto \bb{P}^{s_h} \times \bb{S}(A).
\end{equation}
We post-compose with the canonical inclusion:
\begin{equation}
\bb{U}_h \lto \coprod_{h \in \mathcal{H}}\big(\bb{P}^{s_h} \times \bb{S}(A)\big).
\end{equation}
We post-compose with $\phi^{-1}_{\operatorname{D}}$:
\begin{equation}
\bb{U}_h \lto \bb{S}(?A) \times \bb{S}(A).
\end{equation}
We take $\bb{X}(l)$ to be $\coprod_{h \in \mathcal{H}}\bb{U}_h$. By the universal property of the coproduct, this induces a morphism which we take to be $\iota_l$:
\begin{equation}
\iota_l: \bb{X}(l) = \coprod_{h \in \mathcal{H}}\bb{U}_h \lto \bb{S}(?A) \times \bb{S}(A).
\end{equation}

\textbf{Promotion-link}.
\begin{equation}\label{eq:promotion_link}
\begin{tikzcd}[column sep = small]
	\bullet &&&&& \bullet \\
	& \vdots && \vdots & \vdots \\
	\bullet & \pax & \ldots & \pax & {\prom} & \bullet \\
	& \vdots && \vdots & \vdots
	\arrow["{!B}"', from=3-5, to=4-5]
	\arrow[no head, from=3-5, to=3-6]
	\arrow[no head, from=3-6, to=1-6]
	\arrow[no head, from=1-6, to=1-1]
	\arrow[no head, from=1-1, to=3-1]
	\arrow[no head, from=3-1, to=3-2]
	\arrow[no head, from=3-2, to=3-3]
	\arrow[no head, from=3-3, to=3-4]
	\arrow[no head, from=3-4, to=3-5]
	\arrow["{?A_1}"', from=3-2, to=4-2]
	\arrow["{?A_n}"', from=3-4, to=4-4]
	\arrow["{?A_1}"', from=2-2, to=3-2]
	\arrow["{?A_n}"', from=2-4, to=3-4]
	\arrow["B"', from=2-5, to=3-5]
\end{tikzcd}
\end{equation}

Let $\zeta$ denote the proof net in the interior of the box. That is, let $\zeta$ be the proof net given by the interior of the box and replacing all Pax-links and the Promotion-link by Conclusion-links. Let $\mathcal{L}_\zeta$ denote the set of links of $\zeta$. For each link $l \in \mathcal{L}_\zeta$, the scheme $\bb{X}(l)$ is a subscheme of some product of schemes associated to some edges of $\zeta$. Let $E_l^c$ denote the edges of $\zeta$ which are \textit{not} in $\mathcal{L}_l$, and let $A_e$ denote the formula labelling an edge $e$. Then there is a closed subscheme:
    \begin{equation}
        \prod_{e \in E_l^c}\bb{S}(A_e) \times \bb{X}(l) \lto \bb{S}(\zeta).
    \end{equation}
    We identify $\bb{X}(l)$ with this subscheme. The intersection of the subschemes associated to link $l \in \mathcal{L}_\zeta$ gives a subscheme $\bb{X}(\zeta) = \bigcap_{l \in \mathcal{L}_\zeta}\bb{X}(l) \lto \bb{S}(\zeta)$.

For each $i = 1, \ldots, n$ let $\{s_{h_i}\}_{h_i \in \mathcal{H}}$ denote the set of integers so that
$\bb{S}(?A_i) = \coprod_{h_i \in \mathcal{H}} \bb{P}^{s_{h_i}}$. We fix an element $\textbf{h} = (h_1, \ldots, h_{n}) \in \mathcal{H}^n$. For each $i = 1, \ldots, n$ we let $U_{i} = \operatorname{Spec}R_{i}$ be an open affine chart of $H_{S_i}^{h_i}$, where if $m_i$ denotes the number of unoriented atoms of (necessarily linear) $A_i$, then $S_i = \mathbbm{k}[x_0, \ldots, x_{2^{m_i}-1}]$. Post-compose this with the inclusions $H_{S_i}^{h_i} \lto \bb{P}^{h_{s_i}}$, take the product with $\operatorname{id}: \bb{S}(B) \lto \bb{S}(B)$ and take the product over all $i = 1, \ldots, n$ to obtain
\begin{equation}
\prod_{i = 1}^n \big(U_i \times \bb{S}(B)\big) \lto \prod_{i = 1}^n\big(\bb{P}^{h_{s_i}} \times \bb{S}(B)\big).
\end{equation}
We post-compose this with the canonical inclusion morphisms of the coproduct to obtain
\begin{equation}
\prod_{i = 1}^{n}\big(U_{i} \times \bb{S}(B)\big) \lto \coprod_{\mathbf{h} \in \mathcal{H}^n} \prod_{i = 1}^{n} \big(\bb{P}^{s_{h_i}} \times \bb{S}(B)\big)
\end{equation}
which we post-compose with $\phi^{-1}_{\operatorname{P}^1}$ to obtain
\begin{equation}
\prod_{i = 1}^n \big(U_i \times \bb{S}(B)\big) \lto \prod_{i = 1}^n \big(\bb{S}(?A_i) \times \bb{S}(B)\big).
\end{equation}
We prove in Lemma \ref{lem:well_defined} below that composing $\iota_\zeta: \bb{X}(\zeta) \lto \bb{S}(\zeta)$ with the projection $\rho_{\operatorname{Conc}}: \bb{S}(\zeta) \lto \prod_{i = 1}^n \bb{S}(?A_i) \times \bb{S}(B)$ is a closed immersion
\begin{equation}
\begin{tikzcd}
\bb{X}(\zeta)\arrow[rr, "{\rho_{\operatorname{Conc}}\iota_\zeta}"] & & \prod_{i = 1}^n\big( \bb{S}(?A_i) \times \bb{S}(B)\big).
\end{tikzcd}
\end{equation}
We next consider the scheme $\bb{Y}_{\textbf{h}}$ such that the following is a pullback diagram:
\begin{equation}
\begin{tikzcd}
\prod_{i = 1}^n \big(U_{i} \times \bb{S}(B)\big)\arrow[r] & \prod_{i = 1}^n \big(\bb{S}(?A_i) \times \bb{S}(B)\big)\\
\bb{Y}_{\textbf{h}}\arrow[u]\arrow[r]& \bb{X}(\zeta)\arrow[u, swap, "{\rho_{\operatorname{Conc}}\iota_\zeta}"]
\end{tikzcd}
\end{equation}
Let $R = \bigotimes_{i = 1}^n R_i$ and fix a choice of isomorphism
\begin{equation}
\delta: \prod_{i = 1}^n U_i \lto \operatorname{Spec}R.
\end{equation}
Let $m$ denote the number of unoriented atoms of $B$ and let $S$ denote the graded $\mathbbm{k}$-module $\mathbbm{k}[x_0, \ldots, x_{2^m - 1}]$. Consider the closed immersion
\[\begin{tikzcd}
	{\bb{Y}_{\mathbf{h}}} & {\prod_{i = 1}^{n} \big(U_{i} \times \bb{S}(B)\big)} && {\operatorname{Spec}R \times \bb{S}(B) = \operatorname{Proj}(R \otimes S)}.
	\arrow[from=1-1, to=1-2]
	\arrow["{\delta \times \operatorname{id}_{\bb{S}(B)}}", from=1-2, to=1-4]
\end{tikzcd}\]
There exists a homogeneous saturated ideal $I \subseteq R \otimes S$ such that $\operatorname{Proj}((R \otimes S)/I) \cong \bb{Y}_{\textbf{h}}$. It follows from the proof of Lemma \ref{lem:well_defined} below that for all $d \geq 0$ the $R$-module $(R \otimes S/I)_d$ is locally free of rank $h(d)$, where $h$ is the Hilbert function of $I \subseteq R \otimes S$. Thus $I \in \underline{H_{S}^h}(R)$. Since the Hilbert functor is represented by the scheme $H_S^h$, the ideal $I$ corresponds to a morphism
\begin{equation}
\operatorname{Spec}R \lto H_S^h.
\end{equation}
We pre-compose this with $\delta$ to obtain
\begin{equation}
\label{eq:into_hilbert}
\prod_{i =1}^{n} U_i \lto H_{S}^h.
\end{equation}
This is a morphism depending on choices of open affines $U_{1}, \ldots, U_{n}$ of $H_{S_1}^{h_1}, \ldots, H_{S_n}^{h_n}$ respectively. By ranging over all such choices we obtain a family of morphisms which we can glue to obtain the following:
\begin{equation}
\label{eq:promotion_function}
f: \prod_{i = 1}^{n}H_{S_i}^{h_i} \lto H_{S}^h.
\end{equation}
We consider the graph of this:
\begin{equation}
\Gamma_f \lto \prod_{i = 1}^n \big(H_{S_i}^{h_i} \times H_S^h\big).
\end{equation}
For each $i = 1, \ldots, n$ there is the Grothendieck immersion $H_{S_i}^{h_i}\lto \bb{P}^{s_{h_i}}$. Similarly for each $h \in \mathcal{H}$ there is the Grothendieck immersion $H_S^h \lto \bb{P}^{s_h}$ for some $s_h$. We post-compose with the product of these to obtain
\begin{equation}
\Gamma_f \lto \prod_{i = 1}^n \big(\bb{P}^{s_{h_i}} \times \bb{P}^{s_h}\big).
\end{equation}
We then post-compose with the canonical inclusion morphisms to obtain
\begin{equation}\label{eq:graph_nearly}
\Gamma_{f} \lto \coprod_{\bold{h} \in \mathcal{H}^n}\coprod_{h \in \mathcal{H}} \prod_{i = 1}^n \big(\bb{P}^{h_i} \times \bb{P}^{s_h}\big).
\end{equation}
We take $\bb{X}(l)$ to be $\Gamma_f$. We post-compose with $\phi_{\operatorname{P}^2}^{-1}$ to obtain a morphism which we take to be $\iota_l$:
\begin{equation}
\iota_l: \bb{X}(l) = \Gamma_f \lto \prod_{i = 1}^n\big( \bb{S}(?A_i) \times \bb{S}(!B) \big).
\end{equation}

\textbf{Weakening-link.}
\[\begin{tikzcd}
	\weak \\
	\vdots
	\arrow["{?A}", from=1-1, to=2-1]
\end{tikzcd}\]
We take the empty subscheme $\varnothing$ for $\bb{X}(l)$ and the unique morphism $\varnothing \lto \bb{S}(!A)$ for $\iota_l$:
\begin{equation}
\iota_l: \bb{X}(l) = \varnothing \lto \bb{S}(?A).
\end{equation}

\textbf{Contraction-link.}
\[\begin{tikzcd}[column sep = small]
	\vdots && \vdots \\
	& {\ctr} \\
	& \vdots
	\arrow["{?A}"', curve={height=12pt}, from=1-1, to=2-2]
	\arrow["{?A}", curve={height=-12pt}, from=1-3, to=2-2]
	\arrow["{?A}"', from=2-2, to=3-2]
\end{tikzcd}\]
Let $\Gamma_{\Delta}$ be the graph of the diagonal $\Delta: \bb{S}(?A) \lto \bb{S}(?A) \times \bb{S}(?A)$. We take this to be $\bb{X}(l)$, and $\iota_l$ to be the canonical inclusion:
\begin{equation}
\iota_l: \bb{X}(l) = \Gamma_\Delta \lto \bb{S}(?A) \times \bb{S}(?A) \times \bb{S}(?A).
\end{equation}
\item \textbf{Pax-link}.
\[\begin{tikzcd}
	\vdots \\
	\pax \\
	\vdots
	\arrow["?A"', from=1-1, to=2-1]
	\arrow["?A"', from=2-1, to=3-1]
\end{tikzcd}\]
We define $\bb{X}(l)$ to be the diagonal $\Delta_{\bb{S}(?A)}$ and define $\iota_l$ to be the canonical morphism:
\begin{equation}
\iota_l: \bb{X}(l) = \Delta_{\bb{S}(?A)} \lto \bb{S}(?A) \times \bb{S}(?A).
\end{equation}
\end{defn}

\begin{defn}\label{defn:xpi}
    Let $\pi$ be a proof net with set of links $\mathcal{L}_\pi$. For each link $l \in \mathcal{L}_\pi$, the scheme $\bb{X}(l)$ is a subscheme of some product of schemes associated to some edges of $\pi$ associated with $l$. Let $E_l^c$ denote the edges of $\pi$ which are \textit{not} incident to $l$, and let $A_e$ denote the formula labelling an edge $e$. Then there is a closed subscheme
    \begin{equation}
        \prod_{e \in E_l^c}\bb{S}(A_e) \times \bb{X}(l) \lto \bb{S}(\pi).
    \end{equation}
    We identify $\bb{X}(l)$ with this subscheme. The \textbf{scheme associated to $\pi$ }is the intersection of all schemes associated to the links:
    \begin{equation}
        \bb{X}(\pi) = \bigcap_{l \in \mathcal{L}_\pi}\bb{X}(l).
    \end{equation}
\end{defn}

To prove the prerequisite lemmas used in Definition \ref{def:proof_interpretation} we will make use of the Cartesian product, which is the algebraic equivalent to the product in the category of projective schemes.

\begin{defn}
\label{def:cartesian_product}
Let $S,T$ be graded $\mathbbm{k}$-algebras. We define their \textbf{Cartesian product}, denoted $S \times_{\mathbbm{k}} T$, to be the following graded $\mathbbm{k}$-algebra: as a $\mathbbm{k}$-module it is the sum of the images of the $\mathbbm{k}$-module morphisms $S_d \times_{\mathbbm{k}} T_d \lto S \otimes_{\mathbbm{k}}T$ for all $d \geq 0$.
This is a $\mathbbm{k}$-subalgebra of $S \otimes_{\mathbbm{k}} T$ which is a graded $\mathbbm{k}$-algebra with grading $(S \times_{\mathbbm{k}} T)_d \cong S_d \otimes_{\mathbbm{k}} T_d$ for $d \geq 0$.
\end{defn}

\begin{proposition}
\label{prop:cartesian_commutes_product}
Let $S,T$ be graded $\mathbbm{k}$-algebras, and suppose that $S$ is generated by $S_1$ as an $S_0$-algebra and that $T$ is generated by $T_1$ as a $T_0$-algebra. Then $\operatorname{Proj}(S \times_{\mathbbm{k}} T) \cong \operatorname{Proj}S \times \operatorname{Proj}T$.
\end{proposition}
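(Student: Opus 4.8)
The plan is to verify the stated isomorphism on a standard affine cover of each side and then glue. Since $S$ is generated by $S_1$ over $S_0$, the irrelevant ideal $S_+$ equals the ideal generated by $S_1$, so $\operatorname{Proj}S$ is covered by the distinguished opens $D_+(f) = \operatorname{Spec}\big((S_f)_0\big)$ with $f \in S_1$; symmetrically $\operatorname{Proj}T = \bigcup_{g \in T_1} D_+(g)$ with $D_+(g) = \operatorname{Spec}\big((T_g)_0\big)$, and therefore $\operatorname{Proj}S \times \operatorname{Proj}T$ is covered by the affine opens $D_+(f) \times D_+(g) = \operatorname{Spec}\big((S_f)_0 \otimes_{\mathbbm{k}} (T_g)_0\big)$. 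On the other side, $(S \times_{\mathbbm{k}} T)_0 = S_0 \otimes_{\mathbbm{k}} T_0$ and $(S \times_{\mathbbm{k}} T)_1 = S_1 \otimes_{\mathbbm{k}} T_1$, and because each $S_d$ is spanned over $S_0$ by products of $d$ elements of $S_1$ (and likewise for $T$), the degree $d$ piece $S_d \otimes_{\mathbbm{k}} T_d$ of $S \times_{\mathbbm{k}} T$ is spanned over $S_0 \otimes_{\mathbbm{k}} T_0$ by products of $d$ elementary tensors $s \otimes t$ with $s \in S_1$, $t \in T_1$. Hence $S \times_{\mathbbm{k}} T$ is generated in degree one, and moreover the opens $D_+(f \otimes g)$ with $f \in S_1$, $g \in T_1$ already cover $\operatorname{Proj}(S \times_{\mathbbm{k}} T)$: a homogeneous prime containing every such $f \otimes g$ would, by the previous sentence, contain all of $(S \times_{\mathbbm{k}} T)_{+}$.

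The heart of the argument is to construct, for fixed $f \in S_1$ and $g \in T_1$, a natural isomorphism of $\mathbbm{k}$-algebras
\[
\big((S \times_{\mathbbm{k}} T)_{f \otimes g}\big)_0 \;\cong\; (S_f)_0 \otimes_{\mathbbm{k}} (T_g)_0,
\]
identifying $D_+(f \otimes g)$ with $D_+(f) \times D_+(g)$ over $\operatorname{Spec}\mathbbm{k}$. I would define the map from the right to the left by
\[
\frac{s}{f^{k}} \otimes \frac{t}{g^{l}} \;\longmapsto\; \frac{(s f^{l}) \otimes (t g^{k})}{(f \otimes g)^{k+l}},
\]
noting that $(s f^{l}) \otimes (t g^{k}) \in S_{k+l} \otimes_{\mathbbm{k}} T_{k+l} = (S \times_{\mathbbm{k}} T)_{k+l}$, and the candidate inverse by
\[
\frac{\sum_{i} s_{i} \otimes t_{i}}{(f \otimes g)^{m}} \;\longmapsto\; \sum_{i} \frac{s_{i}}{f^{m}} \otimes \frac{t_{i}}{g^{m}}.
\]
The only step that is not purely formal is well-definedness of the second map: if $(f \otimes g)^{j}\big(\sum_{i} s_{i} \otimes t_{i}\big) = 0$ in $S_{m+j} \otimes_{\mathbbm{k}} T_{m+j}$, then applying the $\mathbbm{k}$-bilinear map $S_{m+j} \otimes_{\mathbbm{k}} T_{m+j} \to (S_f)_0 \otimes_{\mathbbm{k}} (T_g)_0$, $u \otimes v \mapsto \tfrac{u}{f^{m+j}} \otimes \tfrac{v}{g^{m+j}}$, to this vanishing element yields $\sum_i \tfrac{s_i}{f^m} \otimes \tfrac{t_i}{g^m} = 0$. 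Checking that the two maps are mutually inverse ring homomorphisms is then routine.

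Finally I would verify that on an overlap $D_+(f \otimes g) \cap D_+(f' \otimes g') = D_+\big((ff') \otimes (gg')\big)$ the two resulting identifications with $D_+(ff') \times D_+(gg')$ agree; this is immediate from the explicit formulas, since every map in sight is the evident localization map. The gluing lemma for morphisms of schemes then assembles the local isomorphisms into an isomorphism $\operatorname{Proj}(S \times_{\mathbbm{k}} T) \xrightarrow{\ \sim\ } \operatorname{Proj}S \times \operatorname{Proj}T$.

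I expect the only real work to be bookkeeping: keeping the degree shifts straight in the localization computation of the second step and verifying the cocycle compatibility in the third. There is no conceptual obstacle — this is the classical statement that the $\operatorname{Proj}$ of a Segre product of two graded rings computes the fibre product of their $\operatorname{Proj}$'s — but one should take care that the hypothesis of generation in degree one is used essentially, both to obtain affine covers indexed by degree-one elements and to identify the coordinate rings of the charts; without it the $D_+(f)$ with $\deg f = 1$ need not cover. An alternative, slightly slicker route would be to show that $\operatorname{Proj}(S \times_{\mathbbm{k}} T)$ represents the functor $R \mapsto h_{\operatorname{Proj}S}(R) \times h_{\operatorname{Proj}T}(R)$ using the description of maps into a $\operatorname{Proj}$ in terms of line bundles with generating sections, but the chart-by-chart verification above is more elementary and self-contained.
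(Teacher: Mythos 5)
Your proof is correct. The paper does not prove this proposition itself; it simply cites Hartshorne, Exercise II.5.11, of which this is exactly the statement, and your chart-by-chart argument --- covering $\operatorname{Proj}(S \times_{\mathbbm{k}} T)$ by the $D_+(f \otimes g)$ with $f \in S_1$, $g \in T_1$, identifying $\big((S \times_{\mathbbm{k}} T)_{f \otimes g}\big)_0 \cong (S_f)_0 \otimes_{\mathbbm{k}} (T_g)_0$, and gluing --- is the standard solution to that exercise, including the one genuinely non-formal point (well-definedness of the inverse via the auxiliary bilinear map), which you handle correctly.
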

\begin{proof}
See \cite[Exercise 5.11]{Hartshorne}.
\end{proof}

\begin{lemma}
\label{lem:MLL_projection}
Let $\zeta$ be an MLL proof net with conclusions $A_1, \ldots, A_n$. If $\rho_{\text{Conc}}: \bb{S}(\zeta) \lto \prod_{i = 1}^n \bb{S}(A_i)$ denotes the standard projection, then the composite
\begin{equation}
\begin{tikzcd}
\bb{X}(\zeta)\arrow[r,"{\iota_{\zeta}}"] & \bb{S}(\zeta)\arrow[r,"{\rho_{\text{Conc}}}"] & \prod_{i = 1}^n \bb{S}(A_i)
\end{tikzcd}
\end{equation}
is a closed immersion.

Moreover, if $m_i$ is the number of unoriented atoms of $A_i$, $S$ denotes the graded $\mathbbm{k}$-algebra
\begin{equation}
    S = \mathbbm{k}[x_0,\ldots, x_{2^{m_1}-1}] \times_{\mathbbm{k}} \ldots \times_{\mathbbm{k}} \mathbbm{k}[x_0,\ldots, x_{2^{m_n}-1}]
\end{equation}
and $I \subseteq S$ is the unique saturated homogeneous ideal such that
\begin{equation}
    \operatorname{Proj}(S/I) \cong \bb{X}(\zeta)
\end{equation}
then for all $d \geq 0$, $(S/I)_d$ is locally free.
\end{lemma}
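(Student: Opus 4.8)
Throughout write $\rho_{\text{Conc}}\iota_\zeta$ for the composite $\bb{X}(\zeta)\to\bb{S}(\zeta)\to\prod_{i=1}^n\bb{S}(A_i)$. The plan is to prove the two assertions in turn; the first (that $\rho_{\text{Conc}}\iota_\zeta$ is a closed immersion) carries the real content, after which the ``moreover'' clause is a routine observation about finite-dimensional graded pieces. For the first assertion I would first reduce to a statement about points. Since $\bb{X}(\zeta)$ is a closed subscheme of $\bb{S}(\zeta)$, a product of projective spaces over $\mathbbm{k}$, it is projective over $\mathbbm{k}$; as $\prod_{i=1}^n\bb{S}(A_i)$ is separated over $\mathbbm{k}$, the morphism $\rho_{\text{Conc}}\iota_\zeta$ is proper. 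Recalling that a proper monomorphism of schemes is a closed immersion, it therefore suffices to check that $\rho_{\text{Conc}}\iota_\zeta$ is a monomorphism, i.e. that for every $\mathbbm{k}$-algebra $R$ the map $\bb{X}(\zeta)(R)\to\prod_{i=1}^n\bb{S}(A_i)(R)$ is injective. Unwinding Definitions \ref{def:proof_interpretation} and \ref{defn:xpi}, an $R$-point of $\bb{X}(\zeta)$ is a tuple $(\xi_e)_e$ with $\xi_e\in\bb{S}(A_e)(R)$ for each edge $e$, subject to: an Axiom- or Cut-link on $P$ forces the points on its two conclusion edges to agree in $\bb{S}(P)=\bb{S}(\neg P)$; a Tensor- or Par-link forces the point on its conclusion edge to be the Segre image of the pair of points on its premise edges. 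So the claim is that $(\xi_e)_e$ is determined by its restriction to the conclusion edges $A_1,\dots,A_n$.

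I would prove this by induction on a sequentialisation of $\zeta$, using the Sequentialisation Theorem for MLL. If the last rule is an Axiom, $\zeta$ is a single Axiom-link and both its edges are conclusion edges, so there is nothing to prove. If the last rule is a Tensor applied to $\zeta_1,\zeta_2$ (resp. a Par applied to $\zeta_1$), the new link has as its conclusion edge a conclusion edge $e$ of $\zeta$ labelled $A\otimes B$ (resp. $A\parr B$); since the Segre embedding is a monomorphism, $\xi_e$ determines the points on the two premise edges, which are conclusion edges of the $\zeta_j$, and together with the remaining conclusion edges of $\zeta$ (which are exactly the remaining conclusion edges of the $\zeta_j$) the inductive hypothesis determines all of $(\xi_e)_e$. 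If the last rule is a Cut between $\zeta_1$ (conclusions $\Gamma_1,C$) and $\zeta_2$ (conclusions $\Gamma_2,\neg C$), the inductive hypothesis applied to $\zeta_1$ and $\zeta_2$, together with the Cut condition identifying the $C$-edge with the $\neg C$-edge, reduces the problem to showing that the point on the cut edge is determined by the points on $\Gamma_1$ and $\Gamma_2$, i.e. by the conclusion edges of $\zeta$.

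The Cut case is the main obstacle, and it is exactly where the combinatorics of proof nets is needed: it must be shown that, under the Danos--Regnier correctness criterion (acyclicity and connectedness) for $\zeta$, every atomic slot of the cut formula $C$ is joined, through a finite alternation of Axiom-links inside $\zeta_1$ and $\zeta_2$ (crossing the cut formula each time), to an atomic slot occurring in $\Gamma_1\cup\Gamma_2$ --- such a walk cannot close into a cycle, by acyclicity --- so that the corresponding $\bb{P}^1$-coordinate of $\xi_C$ is pinned down. Alternatively, and perhaps more cleanly, one may bypass the Cut case entirely: invoke strong normalisation of MLL and the cut-elimination invariance of $(\bb{S},\bb{X})$ for MLL established in \cite{AlgPnt}, which furnishes for each cut-reduction $\gamma\colon\zeta\to\zeta'$ an isomorphism $\bb{X}(\zeta)\cong\bb{X}(\zeta')$ commuting with the projections onto the (unchanged) conclusion edges; since the argument above already settles the cut-free case, the general case follows. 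Either route reduces to a fact about MLL proof nets that is essentially the projective incarnation of the structure theory in \cite{AlgPnt}.

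For the ``moreover'' clause: by Proposition \ref{prop:cartesian_commutes_product}, applied to the polynomial rings $\mathbbm{k}[x_0,\dots,x_{2^{m_i}-1}]$ (each generated in degree one over $\mathbbm{k}$), we get $\operatorname{Proj}S\cong\prod_{i=1}^n\operatorname{Proj}\mathbbm{k}[x_0,\dots,x_{2^{m_i}-1}]=\prod_{i=1}^n\bb{P}^{2^{m_i}-1}=\prod_{i=1}^n\bb{S}(A_i)$, and $S$ is itself generated in degree one over $\mathbbm{k}$, so it is a finitely generated graded $\mathbbm{k}$-algebra; hence the closed immersion $\rho_{\text{Conc}}\iota_\zeta\colon\bb{X}(\zeta)\hookrightarrow\operatorname{Proj}S$ produced above corresponds to a unique saturated homogeneous ideal $I\subseteq S$ with $\operatorname{Proj}(S/I)\cong\bb{X}(\zeta)$. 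Now $S_d\cong\bigotimes_{i=1}^n\mathbbm{k}[x_0,\dots,x_{2^{m_i}-1}]_d$ is a finite-dimensional $\mathbbm{k}$-vector space, of dimension $\prod_{i=1}^n\binom{2^{m_i}-1+d}{d}$, so $(S/I)_d=S_d/I_d$ is a finite-dimensional $\mathbbm{k}$-vector space, i.e. locally free over the field $\mathbbm{k}$. Moreover this is stable under base change: for any $\mathbbm{k}$-algebra $R$ one has $(R\otimes_{\mathbbm{k}}S\,/\,R\otimes_{\mathbbm{k}}I)_d\cong R\otimes_{\mathbbm{k}}(S/I)_d\cong R^{\dim_{\mathbbm{k}}(S/I)_d}$, locally free over $R$ of rank $\dim_{\mathbbm{k}}(S/I)_d$, which is the form in which this statement is used in the construction of the Promotion-link.
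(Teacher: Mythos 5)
Your proposal is correct in outline but organised quite differently from the paper's proof, and the comparison is instructive. The paper argues directly and concretely: it identifies $\bb{X}(\zeta)$ with the iterated diagonal $\prod_{i=1}^{s-1}\Delta_{i,i+1}$ inside a product of $\bb{P}^1$'s indexed by the atoms along each persistent path, observes that this is isomorphic to the small diagonal and hence to the diagonal $\Delta_{1,s}$ on the two endpoint atoms (which lie in the conclusions), and then exhibits the composite to $\prod_i\bb{S}(A_i)$ as a composite of closed immersions (diagonal followed by Segre); it does this only for $n=1$ and a single persistent path, leaving the general case to the reader, and the ``moreover'' clause is read off from the explicit generators of $I$. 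You instead split the problem into a soft part and a combinatorial part: properness is automatic ($\bb{X}(\zeta)$ is projective, the target separated), so it suffices to prove the map is a monomorphism, which you check on the functor of points by induction on a sequentialisation. This buys a cleaner treatment of general $n$ and of the multiplicative links (the Tensor/Par step is genuinely handled by ``Segre is a monomorphism''), and it isolates exactly where the proof-net combinatorics enters: the Cut case. Note, however, that your route (a) for the Cut case is precisely the persistent-path fact on which the paper's whole proof rests (every alternating Axiom/Cut walk out of a cut atom terminates in a conclusion atom, by acyclicity), and you leave it at essentially the same level of sketchiness as the paper does; it also tacitly assumes atomic Axiom-links, i.e.\ an $\eta$-expanded net, as does the paper's own proof. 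Your route (b) needs one more sentence of care: the cut-elimination invariance in \cite{AlgPnt} is stated for the affine coordinate rings $R_\pi$, and the projective statement you want is the $\ax/\cut$ and $\otimes/\parr$ cases of Theorem \ref{thm:main} of the present paper (transported via Lemma \ref{lem:relation}); those cases do not depend on Lemma \ref{lem:MLL_projection}, so there is no circularity, but you should say so explicitly. Your treatment of the ``moreover'' clause via Proposition \ref{prop:cartesian_commutes_product} and the observation that $(S/I)_d$ is a finite-dimensional vector space over the field $\mathbbm{k}$ (hence free) matches the paper's, and your added remark that local freeness of rank $\dim_{\mathbbm{k}}(S/I)_d$ persists after base change to any $\mathbbm{k}$-algebra $R$ is in fact the form of the statement that Lemma \ref{lem:well_defined} and the Promotion-link construction actually consume, so it is a worthwhile strengthening.
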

\begin{proof}
We prove the $n = 1$ case where $\zeta$ has a unique conclusion $A = A_1$.

Since $\zeta$ is a proof net, $m = 2m'$ is even. The proof net $\zeta$ admits $m'$ persistent paths which begin with a positively oriented formula. We prove the $m' = 1$ case.

Let $Z_1, \ldots, Z_s$ denote the persistent path which begins with a positively oriented formula. For each formula $B_j$ in $\pi$ there is an integer $t_j \geq 1$ and a sequence of atoms $\textbf{X}_j = X_j^1, \ldots, X_j^{t_j}$ such that the concatenation $\textbf{X}_1, \ldots, \textbf{X}_l$ of these sequences is the persistent path $Z_1, \ldots, Z_s$, with $s = \sum_{j = 1}^l t_j$.

We have commutativity of the following diagram where the diagonal arrow is a closed immersion:
\[\begin{tikzcd}
	{\prod_{j = 1}^l\prod_{i =1 }^{t_l}\bb{P}^1_i} & {\prod_{j=1}^l\bb{S}(B_j) = \bb{S}(\pi)} \\
	{\prod_{i = 1}^{s-1}\Delta_{i,i+1} = \bb{X}(\pi)}
	\arrow[from=1-1, to=1-2]
	\arrow[from=2-1, to=1-1]
	\arrow[from=2-1, to=1-2]
\end{tikzcd}\]
where $\bb{P}^1_i$ denotes the copy of $\bb{P}^1$ pertaining to $Z_i$.

Now consider the following commutative diagram, where the left and middle vertical arrows are induced by the right most vertical arrow $\rho_{\text{conc}}$:
\begin{equation}
\label{eq:leftmost_iso}
\begin{tikzcd}
	{\prod_{i = 1}^{s-1}\Delta_{i,i+1} = \bb{X}(\zeta)} & {\prod_{j = 1}^l\prod_{i =1 }^{t_l}\bb{P}^1} & {\prod_{j=1}^l\bb{S}(B_j) = \bb{S}(\zeta)} \\
	{\Delta_{1,s}} & {\bb{P}^1_1 \times \bb{P}^1_s} & {\bb{P}^3 = \bb{S}(A)}
	\arrow[from=1-1, to=1-2]
	\arrow[curve={height=-24pt}, from=1-1, to=1-3]
	\arrow[from=1-1, to=2-1]
	\arrow[from=1-2, to=1-3]
	\arrow[from=1-2, to=2-2]
	\arrow["{\rho_{\text{conc}}}", from=1-3, to=2-3]
	\arrow[from=2-1, to=2-2]
	\arrow["{\operatorname{Seg}}", from=2-2, to=2-3]
\end{tikzcd}
\end{equation}
The first claim follows from the observation that the left most vertical morphism in \ref{eq:leftmost_iso} is an isomorphism, and the bottom horizontal morphisms are closed immersions.

We have computed $\bb{X}(\zeta)$ as $\Delta_{1,s} \lto \bb{P}^3$, so we can write down generators for $I$ explicitly:
\begin{equation}
I = (Z_{00}Z_{11} - Z_{01}Z_{10}, Z_{01} - Z_{10})\subseteq \mathbbm{k}[Z_{00}, Z_{01}, Z_{10}, Z_{11}].
\end{equation}
For each $d \geq 0$ the module $(\mathbbm{k}[Z_{00}, Z_{01}, Z_{10}, Z_{11}]/I)_d$ is a free $\mathbbm{k}$-module.
\end{proof}

The following lemma is used in Definition \ref{def:MELL_interpretation}, in the Promotion-link clause. There, we used the fact that the interior of a box inside a shallow proof net corresponds to an ideal of the right from to induce a morphism into the projective scheme representing the Hilbert scheme. Since we only consider shallow proofs, we do not allow for Weakening-links to appear inside boxes (as nested boxes are forbidden), so we may assume in the following lemma that $\zeta$ admits no Weakening-links.

\begin{lemma}
\label{lem:well_defined}
Let $\zeta$ be a nearly linear proof with conclusions $?A_1, \ldots, ?A_n, B$. For each $i = 1, \ldots, n$ let $\{s_{h_i}\}_{h_i \in \mathcal{H}}$ denote the set of integers so that
$\bb{S}(?A_i) = \coprod_{h_i \in \mathcal{H}} \bb{P}^{s_{h_i}}$. Let $\mathbf{h} = (h_1, \ldots, h_n)$ be an element of $\mathcal{H}^n$. Let $\rho_{\textbf{Conc}}: \bb{S}(\zeta) \lto \prod_{i = 1}^n \bb{S}(?A_i) \times \bb{S}(B)$ denote the standard projection. Consider the following pullback diagram:
\[\begin{tikzcd}
	{\bb{X}(\zeta)} & {\bb{S}(\zeta)} & {\prod_{i = 1}^n\bb{S}(?A_i) \times \bb{S}(B)} \\
	&& {\coprod_{\bold{h} \in \mathcal{H}^n}\prod_{i = 1}^n(\bb{P}^{s_{h_i}} \times \bb{S}(B))} \\
	{\bb{X}(\zeta) \cap \prod_{i = 1}^n(\bb{P}^{s_{h_i}} \times \bb{S}(B))} && {\prod_{i = 1}^n(\bb{P}^{s_{h_i}} \times \bb{S}(B))}
	\arrow["{\iota_{\zeta}}", from=1-1, to=1-2]
	\arrow["{\rho_{\conc}}", from=1-2, to=1-3]
	\arrow["{\phi_{\operatorname{P}^1}^{-1}}"', from=2-3, to=1-3]
	\arrow[from=3-1, to=1-1]
	\arrow[from=3-1, to=3-3]
	\arrow["{\iota_{\bold{h}}}"', from=3-3, to=2-3]
\end{tikzcd}\]
where $\iota_{\textbf{h}}$ is the standard inclusion and $\phi^{-1}_{\operatorname{P}^1}$ is the isomorphism of \eqref{eq:first_promotion_iso}. Then the composite
\[\begin{tikzcd}[column sep = small]
	{\bb{X}(\zeta) \cap \prod_{i = 1}^n(\bb{P}^{s_{h_i}} \times \bb{S}(B))} & {\prod_{i = 1}^n(\bb{P}^{s_{h_i}} \times \bb{S}(B))} & {\coprod_{\mathbf{h} \in {\mathcal{H}}^n}\prod_{i = 1}^n(\bb{P}^{s_{h_i}} \times \bb{S}(B))} & {\prod_{i = 1}^n\bb{S}(?A_i) \times \bb{S}(B)}
	\arrow[from=1-1, to=1-2]
	\arrow["{\iota_{\bold{h}}}"', from=1-2, to=1-3]
	\arrow["{\phi_{\operatorname{P}^1}^{-1}}"', from=1-3, to=1-4]
\end{tikzcd}\]
is a closed immersion. Moreover, if $m$ denotes the number of unoriented atoms in $B$ and $S$ denotes the graded $\mathbbm{k}$-algebra
\begin{equation}
    S = \mathbbm{k}[x_0, \ldots, x_{s_{h_1}}] \times_{\mathbbm{k}} \ldots \times_{\mathbbm{k}} \mathbbm{k}[x_0, \ldots, x_{s_{h_n}}] \times_{\mathbbm{k}} \mathbbm{k}[x_0, \ldots, x_{m}]
\end{equation}
and $I$ the unique saturated homogeneous ideal such that
\begin{equation}
    \operatorname{Proj}(S/I) \cong \bb{X}(\zeta)
\end{equation}
\end{lemma}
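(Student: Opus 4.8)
The plan is to reduce the statement to the purely multiplicative Lemma~\ref{lem:MLL_projection} together with the defining property of the universal family over the Hilbert scheme, using near-linearity to control how the interior of the box sits over its conclusions. First I would unwind the definition of $\bb{X}(\zeta)$. Since $\zeta$ is nearly linear it contains only axiom-, cut-, tensor-, par-, contraction-, dereliction- and conclusion-links. Splitting off the $n$ dereliction-links together with the chains of contraction-links below them (whose associated schemes are diagonals and hence only impose that the several occurrences of each $?A_i$ agree), the remaining links assemble into the \emph{linear part} $\zeta_{\mathrm{lin}}$, an MLL proof net with conclusions $A_1,\dots,A_n,B$. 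Correspondingly $\bb{X}(\zeta)$ is the intersection, inside $\bb{S}(\zeta)$, of: the conditions cutting out $\bb{X}(\zeta_{\mathrm{lin}})$ (relating the interior edges and the $A_i$- and $B$-edges), the diagonals just mentioned, and, for each $i$, the dereliction condition forcing the $\bb{S}(A_i)$-coordinate to lie in the universal subscheme $\bb{U}_{h_i}\subseteq\bb{P}^{s_{h_i}}\times\bb{S}(A_i)$ indexed by the $\bb{S}(?A_i)$-coordinate.

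Next I would apply Lemma~\ref{lem:MLL_projection} to $\zeta_{\mathrm{lin}}$ and strengthen it using near-linearity. That lemma exhibits $\bb{X}(\zeta_{\mathrm{lin}})$ as a product of copies of $\bb{P}^1$ (one per persistent path), closed-immersed into $\prod_i\bb{S}(A_i)\times\bb{S}(B)$ by Segre-type maps, with locally free graded coordinate ring. Because $\zeta$ is nearly linear every persistent path of $\zeta_{\mathrm{lin}}$ meets $B$; running the same persistent-path analysis through $B$ alone shows that the projection $\bb{X}(\zeta_{\mathrm{lin}})\to\bb{S}(B)$ is still a closed immersion and that each coordinate of each $A_i$ is a regular function of the $B$-coordinates, i.e. there are morphisms $\alpha_i\colon\bb{X}(\zeta_{\mathrm{lin}})\to\bb{S}(A_i)$ — each a (flat) projection onto a sub-product of the $\bb{P}^1$'s followed by a Segre embedding — with $\bb{X}(\zeta_{\mathrm{lin}})=\{(\alpha_1(b),\dots,\alpha_n(b),b)\}$. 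Fixing $\mathbf{h}\in\mathcal{H}^n$ and performing the base change along $\phi_{\operatorname{P}^1}^{-1}\circ\iota_{\mathbf{h}}$, the dereliction conditions become $\alpha_i(b)\in V(J_i)$ as $J_i$ ranges over $H_{S_i}^{h_i}=\bb{P}^{s_{h_i}}$, so that
\[
\bb{X}(\zeta)\cap\prod_{i}\bigl(\bb{P}^{s_{h_i}}\times\bb{S}(B)\bigr)\;=\;\Bigl\{(J_1,\dots,J_n,b)\in\prod_i H_{S_i}^{h_i}\times\bb{X}(\zeta_{\mathrm{lin}})\ :\ \alpha_i(b)\in V(J_i)\ \text{for all }i\Bigr\}.
\]
The closed-immersion assertion now follows: on $\bb{X}(\zeta)$ the interior- and $A_i$-coordinates are determined by the $B$-coordinate (by Lemma~\ref{lem:MLL_projection} and the $\alpha_i$), so the composite into $\prod_i\bb{S}(?A_i)\times\bb{S}(B)$ is a monomorphism; after restricting to the $\mathbf{h}$-component its target $\prod_i\bb{P}^{s_{h_i}}\times\bb{S}(B)$ is projective, and a proper monomorphism is a closed immersion — closedness of the image being visible from the displayed description, where $\bb{X}(\zeta)\cap\prod_i(\bb{P}^{s_{h_i}}\times\bb{S}(B))$ is cut out of the closed subscheme $\prod_i H_{S_i}^{h_i}\times\bb{X}(\zeta_{\mathrm{lin}})$ by the closed conditions $\alpha_i(b)\in V(J_i)$. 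Composing with the clopen immersion $\iota_{\mathbf{h}}$ and the isomorphism $\phi_{\operatorname{P}^1}^{-1}$ does not affect this.

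For the concluding local-freeness assertion I would argue locally over the Hilbert schemes. Fix open affines $U_i=\operatorname{Spec}R_i\subseteq H_{S_i}^{h_i}$, set $R=\bigotimes_i R_i$, and let $I^{(i)}\subseteq R_i\otimes S_i$ be the universal ideal, for which $(R_i\otimes S_i/I^{(i)})_d$ is locally free of rank $h_i(d)$ over $R_i$ by the very definition of $\underline{H_{S_i}^{h_i}}$. Applying the graded substitutions $\alpha_i^{\ast}$ turns the generators of the $I^{(i)}$ into elements of $R\otimes S'$, where $S'$ is the homogeneous coordinate ring of $\bb{S}(B)$; together with generators for the ideal of $\bb{X}(\zeta_{\mathrm{lin}})$ inside $\bb{S}(B)$ they generate the saturated ideal $I$ with $\operatorname{Proj}((R\otimes S')/I)\cong\bb{Y}_{\mathbf{h}}$. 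One then checks degree by degree — using that each $\alpha_i$ is a flat projection followed by a Segre embedding, and the Cartesian-product/Segre dictionary of Proposition~\ref{prop:cartesian_commutes_product} to keep everything single-graded — that $(R\otimes S'/I)_d$ remains locally free over $R$, necessarily of rank the common fibrewise Hilbert value $h(d)$; gluing over the charts $U_i$ then gives the assertion, which is precisely the input consumed by the Promotion clause of Definition~\ref{def:proof_interpretation}.

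The hard part is this last step: the fibre of $\bb{Y}_{\mathbf{h}}\to\operatorname{Spec}R$ over $(J_1,\dots,J_n)$ is $\bigcap_i\alpha_i^{-1}(V(J_i))\subseteq\bb{X}(\zeta_{\mathrm{lin}})$, and I expect the real work to be showing that this family is flat — equivalently that this Hilbert function is constant in the $J_i$. This is not a formal consequence of flatness of the individual universal families, since the $\alpha_i$ are Segre embeddings up to a flat projection and hence not themselves flat; it is exactly here that the special shape of $\alpha_i$ forced by near-linearity is used, via the fact that pulling a length-$h_i$ family back along such an $\alpha_i$, and then intersecting over $i$, yields a Hilbert function depending only on $(h_1,\dots,h_n)$ and on $\zeta_{\mathrm{lin}}$. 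Everything else is bookkeeping with the coproducts and the fixed isomorphisms $\phi_{\operatorname{M}},\phi_{\operatorname{D}},\phi_{\operatorname{P}^1}$.
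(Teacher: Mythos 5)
Your proposal follows essentially the same route as the paper's proof: drop the contraction-links as harmless diagonals, expose the linear part and apply Lemma~\ref{lem:MLL_projection}, use near-linearity to realise the persistent-path product $(\bb{P}^1)^p$ as a closed subscheme of $\bb{S}(B)$, and then intersect with the universal subschemes $\bb{U}_{h_i}$ to obtain the closed immersion. The only divergence is at the final local-freeness claim, where you correctly flag that constancy of the Hilbert function of $\bigcap_i \alpha_i^{-1}(V(J_i))$ as the $J_i$ vary is not formal; the paper's own proof is no more detailed there, asserting only that the relevant intersection is of closed subschemes ``of the right form,'' so your more cautious account is, if anything, the more complete one.
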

then for all $d \geq 0$, $(S/I)_d$ is locally free.
\begin{proof}
Contraction-links trivially introduce isomorphisms which may be ignored, and so we assume that $\zeta$ is Contraction-free. Thus, $\zeta$ is of the following form:
\[\begin{tikzcd}
	& {\zeta'} \\
	\der & \ldots & \der \\
	\conc && \conc & \conc
	\arrow[curve={height=12pt}, from=1-2, to=2-1]
	\arrow[curve={height=-12pt}, from=1-2, to=2-3]
	\arrow["B", curve={height=-24pt}, from=1-2, to=3-4]
	\arrow["{?A_1}", from=2-1, to=3-1]
	\arrow["{?A_n}", from=2-3, to=3-3]
\end{tikzcd}\]
where $\zeta'$ is the linear part (Definition \ref{def:nearly_linear}) of the nearly linear proof $\zeta$.

By Lemma \ref{lem:MLL_projection} we have that the canonical projection
\begin{equation}
    \bb{X}(\zeta') \lto \prod_{i = 1}^n\bb{S}(A_i) \times \bb{S}(B)
\end{equation}
is a closed immersion. For each $i = 1, \ldots, n$ let $\bb{U}_{h_i} \lto \bb{P}^{s_{h_i}} \times \bb{S}(A_i)$ 
be the closed immersion of the universal closed subscheme as given in \eqref{eq:universal_embedding}. Then
\[\begin{tikzcd}
	{\prod_{i = 1}^n\bb{U}_{h_i} \cap \bb{X}(\zeta') \cap \prod_{i = 1}^n(\bb{P}^{s_{h_i}} \times \bb{S}(B))} & {\prod_{i = 1}^n(\bb{P}^{s_{h_i}} \times \bb{S}(A_i) \times \bb{S}(B))}
	\arrow[from=1-1, to=1-2]
\end{tikzcd}\]
is a closed immersion (being the pullback of two closed immersions).

Each $A_i$ as well as $B$ is linear. Let $m_1, \ldots, m_n,m$ respectively denote the number of unoriented atoms of $A_1, \ldots, A_n, B$. Let $f: \bb{N} \lto \bb{N}$ denote the function given by the equation $f(n) \longmapsto 2^n-1$, then
\begin{equation}
    \bb{S}(A_i) = \bb{P}^{f(m_i)},\quad \bb{S}(B) = \bb{P}^{f(m)}.
\end{equation}
The integer $\sum_{i=1}^nm_i + m$ is the number of unoriented atoms in the conclusions of a MLL proof net, and thus is necessarily even. Let $p$ denote half this number. We saw in the proof of Lemma \ref{lem:MLL_projection} that there is a factorisation:
\[\begin{tikzcd}
	{\bb{X}(\zeta') \cap \prod_{i = 1}^n(\bb{S}(A_i) \times \bb{S}(B))} & {\prod_{i = 1}^n(\bb{S}(A_i) \times \bb{S}(B))} \\
	& {(\bb{P}^1)^p}
	\arrow[from=1-1, to=1-2]
	\arrow[from=1-1, to=2-2]
	\arrow[from=2-2, to=1-2]
\end{tikzcd}\]
This implies that there exists the following factorisation:
\[\begin{tikzcd}
	{\prod_{i = 1}^n \bb{U}_{h_i} \cap \bb{X}(\zeta') \cap \prod_{i = 1}^n(\bb{P}^{s_{h_i}} \times \bb{S}(B))} & {\prod_{i = 1}^n(\bb{P}^{s_{h_i}} \times \bb{S}(A_i) \times \bb{S}(B))} \\
	& {\big(\prod_{i = 1}^n\bb{U}_{h_i} \times \bb{S}(B)\big) \cap \big(\prod_{i = 1}^n \bb{P}^{s_{h_j}} \times (\bb{P}^1)^p\big)}
	\arrow[from=1-1, to=1-2]
	\arrow[from=1-1, to=2-2]
	\arrow[from=2-2, to=1-2]
\end{tikzcd}\]
We have assumed that $\zeta$ is shallow, and so all persistent paths of $\zeta'$ go through $B$. This implies that $(\bb{P}^1)^p \lto \prod_{i = 1}^n\bb{S}(A_i) \times \bb{S}(B)$ can be realised as a closed subscheme of $\bb{S}(B)$. We thus have the following factorisation:
\[\begin{tikzcd}
	{(\bb{P}^1)^p} & {\prod_{i = 1}^n\bb{S}(A_i) \times \bb{S}(B)} \\
	& {\bb{S}(B)}
	\arrow[from=1-1, to=1-2]
	\arrow[from=1-1, to=2-2]
	\arrow["{\operatorname{Projection}}", from=1-2, to=2-2]
\end{tikzcd}\]
where the diagonal arrow is a closed immersion. This implies the existence of the following factorisation:
\[\begin{tikzcd}
	{\prod_{i = 1}^n(\bb{P}^{s_{h_i}} \times \bb{S}(A_i) \times \bb{S}(B))} & {\prod_{i = 1}^n(\bb{P}^{s_{h_i}}\times \bb{S}(B))} \\
	{\prod_{i = 1}^n \bb{U}_{h_i} \cap (\bb{P}^1)^p}
	\arrow["\rho", from=1-1, to=1-2]
	\arrow[from=2-1, to=1-1]
	\arrow["\alpha"', from=2-1, to=1-2]
\end{tikzcd}\]
where $\rho$ is a product of projections and $\alpha$ is a closed immersion.

The final claim follows from the fact that $\prod_{i = 1}^n \bb{U}_{h_i} \cap (\bb{P}^1)^p \lto \prod_{i = 1}^n\bb{S}(?A_i) \times \bb{S}(B)$ is an intersection of closed subschemes of the right form.
\end{proof}

\begin{remark}
\label{rmk:extension}
We remark that Lemma \ref{lem:well_defined} is the main hurdle in extending our model beyond shallow proofs and to all MELL proofs. We commented on this already in Remark \ref{rmk:equations} and we will again in Section \ref{sec:future}.
\end{remark}

\subsection{Invariance under cut-elimination}
Now that we have defined the locally closed subscheme $\bb{X}(\pi)$ of $\bb{S}(\pi)$ to each shallow proof net $\pi$, we now move onto considering how the schemes $\bb{X}(\pi)$ and $\bb{X}(\pi')$ are related if $\pi'$ is a shallow proof net obtained from $\pi$ via single step cut-reduction. The main result of this paper is Theorem \ref{thm:main} which extends \cite[Proposition 4.6]{AlgPnt}.

\begin{defn}
\label{def:S_T}
For each reduction $\gamma: \pi \lto \pi'$ we define a closed subscheme $\bb{Y}(\pi') \subseteq \bb{S}(\pi')$ and a pair of morphisms of schemes $S_\gamma: \bb{S}(\pi) \lto \bb{S}(\pi')$, $T_\gamma: \bb{Y}(\pi') \lto \bb{S}(\pi)$.

Let $\gamma: \pi \lto \pi'$ be a reduction. Let $\mathcal{E}_\pi$ denote the set of edges of $\pi$, and $\mathcal{E}_{\pi'}$ that of $\pi'$.

\textbf{$\gamma: \pi \lto \pi'$ is an $\ax/\cut$-reduction.} We set $\bb{Y}(\pi') = \bb{S}(\pi')$. Consider the following reduction where the labels $a,b,c,d$ are artificial:
\begin{equation}\label{eq:ax/cut_red}
\begin{tikzcd}[column sep = small]
	& \ax && \vdots \\
	\vdots && \cut
	\arrow["{A_b}", from=1-2, to=2-3]
	\arrow["{\neg A_c}", curve={height=-12pt}, from=1-4, to=2-3]
	\arrow["{\neg A_a}"', curve={height=12pt}, from=1-2, to=2-1]
\end{tikzcd}
\quad
\stackrel{\gamma}{\lto}
\quad
%
\begin{tikzcd}[column sep = small]
	\vdots \\
	\vdots
	\arrow["{\neg A_d}"', from=1-1, to=2-1]
\end{tikzcd}
\end{equation}
For the edge $e$ of $\pi'$ labelled $\neg A_d$ in \eqref{eq:ax/cut_red}, let $\rho_e$ denote the projection $\rho_e: \bb{S}(\pi) \lto \bb{S}(\neg A_c)$. For every edge $e$ of $\pi'$ which is not displayed in \eqref{eq:ax/cut_red} there is a corresponding edge $e'$ of $\pi$. For these, set $\rho_e$ to be the projection $\rho_e: \bb{S}(\pi) \lto \bb{S}(A_{e'})$. We define $S_{\gamma}: \bb{S}(\pi) \lto \bb{S}(\pi')$ to be the morphism induced by the universal property of the product and the set $\{\rho_e\}_{e \in \mathcal{E}_{\pi'}}$.

For the edges $e$ of $\pi$ displayed in \eqref{eq:ax/cut_red} labelled $\neg A_a, A_b, \neg A_c$, let $\tau_e$ denote the projection $\bb{S}(\pi') \lto \bb{S}(\neg A_d)$. For every edge $e$ of $\pi$ which is not displayed in \eqref{eq:tensor_red} there is a corresponding edge $e'$ of $\pi'$. For these set $\tau_e$ to be the projection $\tau_e: \bb{S}(\pi') \lto \bb{S}(A_{e'})$. We define $T_{\gamma}: \bb{S}(\pi') \lto \bb{S}(\pi)$ to be the morphism induced by the universal property of the product and the set $\{\tau_e\}_{e \in \mathcal{E}_{\pi}}$.

\textbf{$\gamma: \pi \lto \pi'$ is a $\otimes/\parr$-reduction.} We set $\bb{Y}(\pi') = \bb{S}(\pi')$. Consider the following reduction where the labels are artificial:
\begin{equation}
\label{eq:tensor_red}
\begin{tikzcd}[column sep = small]
	\vdots && \vdots && \vdots && \vdots \\
	& \otimes &&&& \parr \\
	&&& \cut \\
	&&& {\stackrel{\gamma}{\lto}} \\
	\vdots && \vdots && \vdots && \vdots \\
	&&& \cut \\
	&&& \cut
	\arrow["{A_a}"', curve={height=12pt}, from=1-1, to=2-2]
	\arrow["{B_b}", curve={height=-12pt}, from=1-3, to=2-2]
	\arrow["{\neg B_c}"', curve={height=12pt}, from=1-5, to=2-6]
	\arrow["{\neg A_d}", curve={height=-12pt}, from=1-7, to=2-6]
	\arrow["{(A \otimes B)_f}"', curve={height=12pt}, from=2-2, to=3-4]
	\arrow["{(\neg B \parr \neg A)_g}", curve={height=-12pt}, from=2-6, to=3-4]
	\arrow["{A_j}"', curve={height=18pt}, from=5-1, to=7-4]
	\arrow["{B_h}"', curve={height=12pt}, from=5-3, to=6-4]
	\arrow["{\neg B_i}", curve={height=-12pt}, from=5-5, to=6-4]
	\arrow["{\neg A_k}", curve={height=-18pt}, from=5-7, to=7-4]
\end{tikzcd}
\end{equation}
For the edges of $\pi'$ displayed in \eqref{eq:tensor_red} define a morphism $\rho_e$ to be a projection according to the following table:
\begin{center}
\begin{tabular}{| c | c |}
\hline
\textbf{Edge label} & $\rho_e$\\
\hline
$B_h$ & $\bb{S}(\pi) \lto \bb{S}(B_b)$\\
\hline
$\neg B_i$ & $\bb{S}(\pi) \lto \bb{S}(\neg B_c)$\\
\hline
$A_j$ & $\bb{S}(\pi) \lto \bb{S}(A_a)$\\
\hline
$\neg A_k$ & $\bb{S}(\pi) \lto \bb{S}(\neg A_d)$\\
\hline
\end{tabular}
\end{center}
For every edge $e$ of $\pi'$ which is not displayed in \eqref{eq:tensor_red} there is a corresponding edge $e'$ of $\pi$. For these set $\rho_e$ to be the projection $\bb{S}(\pi) \lto \bb{S}(A_{e'})$. We define $S_{\gamma}: \bb{S}(\pi) \lto \bb{S}(\pi')$ to be the morphism induced by the universal property of the product and the set $\{\rho_e\}_{e \in \mathcal{E}_{\pi'}}$.

For the following edges of $\pi$ displayed in \eqref{eq:tensor_red} define a morphism $\tau_e$ to be a projection according to the following table:

\begin{center}
\begin{tabular}{| c | c |}
\hline
\textbf{Edge label} & $\tau_e$\\
\hline
$A_a$ & $\bb{S}(\pi') \lto \bb{S}(A_j)$\\
\hline
$B_b$ & $\bb{S}(\pi') \lto \bb{S}(B_h)$\\
\hline
$\neg B_c$ & $\bb{S}(\pi') \lto \bb{S}(\neg B_i)$\\
\hline
$\neg A_d$ & $\bb{S}(\pi') \lto \bb{S}(\neg A_k)$\\
\hline
\end{tabular}
\end{center}
For the edge $e$ labelled $(A \otimes B)_f$, say $\bb{S}(A) = \coprod_{i \in I}\bb{P}^{r_i}, \bb{S}(B) = \coprod_{j \in J}\bb{P}^{s_j}$. For each pair $(i,j) \in I \times J$ we consider the Segre embedding
\begin{equation}
\operatorname{Seg}: \bb{P}^{r_i} \times \bb{P}^{s_j} \lto \bb{P}^{(r_i + 1)(s_j + 1)-1}.
\end{equation}
We post-compose this with the canonical inclusion to obtain
\begin{equation}
\bb{P}^{r_i} \times \bb{P}^{s_j} \lto \coprod_{i \in I}\coprod_{j \in J}\bb{P}^{(r_i + 1)(s_j + 1)-1} = \bb{S}(A \otimes B).
\end{equation}
By the universal property of the coproduct we obtain
\begin{equation}
\coprod_{i \in I}\coprod_{j \in J}\bb{P}^{r_i} \times \bb{P}^{s_j} \lto \bb{S}(A \otimes B)
\end{equation}
which we pre-compose with $\phi_{\operatorname{M}}$ of Definition \ref{def:MELL_interpretation} to obtain
\begin{equation}
\bb{S}(A) \times \bb{S}(B) \lto \bb{S}(A \otimes B).
\end{equation}
We set this to be $\tau_e$. We define $\tau_e$ similarly when $e$ is the edge labelleing $(\neg B \parr \neg A)_g$. We define $T_\gamma: \bb{S}(\pi') \lto \bb{S}(\pi)$ to be the morphism induced by the universal property of the product and the set $\{\tau_e\}_{e \in \mathcal{E}_{\pi}}$.

\textbf{$\gamma: \pi \lto \pi'$ is a $!/?$-reduction.} Consider the following reduction:
\begin{equation}\label{eq:?=!-red}
\begin{tikzcd}[column sep = small]
	& \bullet &&& \bullet \\
	\vdots && \vdots & \vdots \\
	{?} & \bullet & {\prom} & \pax & \bullet \\
	& {\cut} && {\operatorname{c}} \\
	\vdots && \vdots & \vdots \\
	& {\cut} && {\operatorname{c}}
	\arrow[no head, from=3-3, to=3-4]
	\arrow[no head, from=3-3, to=3-2]
	\arrow["{?B_c}", from=2-4, to=3-4]
	\arrow["{?B_g}", from=3-4, to=4-4]
	\arrow[no head, from=1-2, to=3-2]
	\arrow[no head, from=1-2, to=1-5]
	\arrow[no head, from=1-5, to=3-5]
	\arrow["{A_b}", from=2-3, to=3-3]
	\arrow["{!A_f}", curve={height=-12pt}, from=3-3, to=4-2]
	\arrow["{?\neg A_d}"', curve={height=12pt}, from=3-1, to=4-2]
	\arrow["{\neg A_a}"', from=2-1, to=3-1]
	\arrow["{?B_j}", from=5-4, to=6-4]
	\arrow["{A_i}", curve={height=-12pt}, from=5-3, to=6-2]
	\arrow["{\neg A_h}"', curve={height=12pt}, from=5-1, to=6-2]
	\arrow[no head, from=3-5, to=3-4]
\end{tikzcd}
\end{equation}
Say $\bb{S}(?B_g) = \coprod_{h \in \mathcal{H}} \bb{P}^{s_h}$. Then there exists a graded $\mathbbm{k}$-algebra $S$, such that for each Hilbert function $h \in \mathcal{H}$, there is a fixed choice of closed immersion $H_S^h \lto \bb{P}^{s_h}$. We set $\bb{Y}(\pi') = \coprod_{h \in \mathcal{H}}H_S^{h_s} \cap \bb{S}(\pi')$.

For the edges $e$ of $\pi'$ displayed in \eqref{eq:tensor_red} define a morphism $\rho_e$ to be a projection according to the following table:
\begin{center}
\begin{tabular}{| c | c |}
\hline
\textbf{Edge label} & $\rho_e$\\
\hline
$\neg A_h$ & $\bb{S}(\pi) \lto \bb{S}(\neg A_a)$\\
\hline
$A_i$ & $\bb{S}(\pi) \lto \bb{S}(A_b)$\\
\hline
$?B_j$ & $\bb{S}(\pi) \lto \bb{S}(?B_c)$\\
\hline
\end{tabular}
\end{center}
For every edge $e$ of $\pi'$ which is not displayed in \eqref{eq:?=!-red} there is a corresponding edge $e'$ of $\pi$. For these we set $\rho_e$ to be the projection $\bb{S}(\pi) \lto \bb{S}(A_{e'})$. We define $S_{\gamma}: \bb{S}(\pi) \lto \bb{S}(\pi')$ to be the morphism given by the universal property of the product and the set $\{\rho_e\}_{e \in \mathcal{E}_{\pi'}}$.

For the following edges $e$ of $\pi$ displayed in \eqref{eq:?=!-red} we define a morphism $\tau_e$ to be a projection according to the following table:
\begin{center}
\begin{tabular}{| c | c |}
\hline
\textbf{Edge label} & $\tau_e$\\
\hline
$\neg A_a$ & $\bb{S}(\pi') \lto \bb{S}(\neg A_h)$\\
\hline
$A_b$ & $\bb{S}(\pi') \lto \bb{S}(A_i)$\\
\hline
$?B_c$ & $\bb{S}(\pi') \lto \bb{S}(?B_j)$\\
\hline
$?B_g$ & $\bb{S}(\pi') \lto \bb{S}(?B_j)$\\
\hline
\end{tabular}
\end{center}
Let $\zeta$ denote the proof net in the interior of the displayed box. We have already seen in Definition \ref{def:MELL_interpretation} that if $\bb{S}(!A) = \coprod_{h \in \mathcal{H}}\bb{P}^{s_h}$ and if we are given an element $h_1 \in \mathcal{H}$, we can construct graded $\mathbbm{k}$-algebras $S_1, S$ along with a morphism
\begin{equation}
f: H_{S_1}^{h_1} \lto H_S^h
\end{equation}
as in \eqref{eq:promotion_function}. We post-compose with the canonical inclusion to obtain
\begin{equation}
H_{S_1}^{h_1} \lto \coprod_{h \in \mathcal{H}} H_S^h \cong \bb{S}(!A).
\end{equation}
By the universal property of the disjoint union we obtain
\begin{equation}
\coprod_{h_1 \in \mathcal{H}}H_{S_1}^{h_1} \lto \bb{S}(!A)
\end{equation}
which we pre-compose with $(\phi_{\operatorname{P}^2}\vert_{\coprod_{h_1 \in \mathcal{H}}H_{S_1}^{h_1}})^{-1}$, which is the inverse of a restriction of $\phi_{\operatorname{P}^2}$ of Definition \ref{def:MELL_interpretation} in order to obtain
\begin{equation}
\coprod_{h_1 \in \mathcal{H}}H_{S_1}^{h_1} \cap (\bb{S}(?B_j) \times \bb{S}(A_i)) \lto \bb{S}(!A).
\end{equation}
We take $\tau_e$, for $e$ the edges labelled $!A_f, ?\neg A_d$, to be the result of pre-composing this with (the restriction of) the projection $\bb{S}(\pi') \lto \bb{S}(?B_j) \times \bb{S}(A_i)$:
\begin{equation}
\tau_e: \bb{Y}(\pi') \lto \bb{S}(!A).
\end{equation}
We define $T_\gamma: \bb{Y}(\pi') \lto \bb{S}(\pi)$ to be the morphism induced by the universal property of the product and the set $\{\tau_e\}_{e \in \mathcal{E}_{\pi}}$.

\textbf{$\gamma: \pi \lto \pi'$ is a $\weak/!$-reduction.}
\begin{equation}
\label{eq:weak/!}
\begin{tikzcd}[column sep = small]
	& \bullet &&& \bullet \\
	&& \vdots & \vdots \\
	\weak & \bullet & {\prom} & \pax & \bullet \\
	& \cut && \vdots \\
	&&& \lto \\
	&&& \weak \\
	&&& \vdots
	\arrow["{?\neg A_a}"', curve={height=12pt}, from=3-1, to=4-2]
	\arrow["{!A_b}", curve={height=-12pt}, from=3-3, to=4-2]
	\arrow[no head, from=3-2, to=3-3]
	\arrow[no head, from=3-3, to=3-4]
	\arrow[no head, from=3-5, to=1-5]
	\arrow[no head, from=1-5, to=1-2]
	\arrow[no head, from=1-2, to=3-2]
	\arrow["{A_c}", from=2-3, to=3-3]
	\arrow["{?B_{d}}", from=2-4, to=3-4]
	\arrow["{?B_f}", from=3-4, to=4-4]
	\arrow["{?B_g}", from=6-4, to=7-4]
	\arrow[no head, from=3-4, to=3-5]
\end{tikzcd}
\end{equation}
Let $\varnothing_g \lto \bb{S}(?B_g)$ denote the empty subscheme. We set $\bb{Y}(\pi') = \varnothing_g \cap \bb{S}(\pi')$.

For the edge $e$ of \eqref{eq:weak/!} labelled $?B_g$, define $\rho_e$ to be the projection $\rho_e: \bb{S}(\pi) \lto \bb{S}(?B_f)$. For every edge $e$ of $\pi'$ which is not displayed in \eqref{eq:?=!-red} there is a corresponding edge $e'$ of $\pi$. For these we set $\rho_e$ to be the projection $\bb{S}(\pi) \lto \bb{S}(A_{e'})$. We define $S_{\gamma}: \bb{S}(\pi) \lto \bb{S}(\pi')$ to be the morphism induced by the universal property of the product and the set $\{\rho_e\}_{e \in \mathcal{E}_{\pi'}}$.

Let $\zeta$ denote the proof inside the box. The empty scheme $\varnothing_g$ is the initial object in the category of schemes over $\mathbbm{k}$. For each edge $e$ of $\zeta$ we define $\tau_{e}$ to be the unique morphism $\tau_e: \varnothing_g \lto \bb{S}(A_{e})$. If $e$ is labelled $?\neg A_a$ or $!A_b$ we similarly define $\tau_e$ to be the unique morphism $\tau_e: \varnothing_g \lto \bb{S}(A_{e})$. For every edge $e$ of $\pi$ which is not displayed in \eqref{eq:weak/!} there is a corresponding edge $e'$ of $\pi'$. For these we set $\tau_e$ to be the projection $\tau_e: \bb{S}(\pi') \lto \bb{S}(A_{e'})$. We define $T_\gamma: \bb{Y}(\pi') \lto \bb{S}(\pi)$ to be the morphism induced by the universal property of the product and the set $\{\tau_e\}_{e \in \mathcal{E}_\pi}$.

\textbf{$\gamma: \pi \lto \pi'$ is a $\ctr/!$-reduction.} Set $\bb{Y}(\pi') = \bb{S}(\pi')$.
\begin{equation}
\label{eq:ctr/!_reduction}
\begin{tikzcd}[column sep = small]
	&&&&& \bullet &&& \bullet \\
	& \vdots && \vdots &&& \vdots & \vdots \\
	&& {\ctr} &&& \bullet & {\prom} & {\pax} & \bullet \\
	&&&& {\cut} &&& \vdots \\
	& \bullet &&& \bullet && \bullet &&& \bullet \\
	&& \vdots & \vdots &&&& \vdots & \vdots \\
	\vdots & \bullet & {\prom} & {\pax} & \bullet & \vdots & \bullet & {\prom} & {\pax} & \bullet \\
	& {\cut} &&&&& {\cut} \\
	&&&&& {\ctr} \\
	&&&&& \vdots
	\arrow["{!A_g}", curve={height=-12pt}, from=3-7, to=4-5]
	\arrow["{A_c}", no head, from=2-7, to=3-7]
	\arrow["{A_i}", no head, from=6-3, to=7-3]
	\arrow["{A_k}", no head, from=6-8, to=7-8]
	\arrow["{!A_n}", curve={height=-12pt}, from=7-3, to=8-2]
	\arrow["{!A_q}", curve={height=-12pt}, from=7-8, to=8-7]
	\arrow[no head, from=3-8, to=3-7]
	\arrow[no head, from=3-7, to=3-6]
	\arrow[no head, from=3-6, to=1-6]
	\arrow[no head, from=1-6, to=1-9]
	\arrow[no head, from=1-9, to=3-9]
	\arrow["{?B_d}", from=2-8, to=3-8]
	\arrow["{?B_h}", from=3-8, to=4-8]
	\arrow["{?B_j}", from=6-4, to=7-4]
	\arrow[no head, from=7-2, to=7-3]
	\arrow[no head, from=7-3, to=7-4]
	\arrow[no head, from=7-5, to=5-5]
	\arrow[no head, from=5-2, to=5-5]
	\arrow[no head, from=5-2, to=7-2]
	\arrow[no head, from=7-7, to=7-8]
	\arrow[no head, from=7-8, to=7-9]
	\arrow[no head, from=7-10, to=5-10]
	\arrow[no head, from=5-10, to=5-7]
	\arrow[no head, from=5-7, to=7-7]
	\arrow["{?B_l}", from=6-9, to=7-9]
	\arrow["{?B_o}", curve={height=12pt}, from=7-4, to=9-6]
	\arrow["{?B_r}", curve={height=-18pt}, from=7-9, to=9-6]
	\arrow["{?B_s}", from=9-6, to=10-6]
	\arrow[no head, from=7-4, to=7-5]
	\arrow[no head, from=7-9, to=7-10]
	\arrow[no head, from=3-8, to=3-9]
	\arrow["{?\neg A_m}"', curve={height=12pt}, from=7-1, to=8-2]
	\arrow["{?\neg A_p}"', curve={height=12pt}, from=7-6, to=8-7]
	\arrow["{?\neg A_f}"', curve={height=12pt}, from=3-3, to=4-5]
	\arrow["{?\neg A_a}"', curve={height=12pt}, from=2-2, to=3-3]
	\arrow["{?\neg A_b}", curve={height=-12pt}, from=2-4, to=3-3]
\end{tikzcd}
\end{equation}
For the edges $e$ of $\pi'$ displayed in \eqref{eq:ctr/!_reduction} define a morphism $\rho_e$ to be a projection according to the following table:
\begin{center}
\begin{tabular}{| c | c |}
\hline
\textbf{Edge label} & $\rho_e$\\
\hline
$?\neg A_m$ & $\bb{S}(\pi) \lto \bb{S}(?A_g)$\\
\hline
$!A_n$ & $\bb{S}(\pi) \lto \bb{S}(!A_g)$\\
\hline
$A_i$ & $\bb{S}(\pi) \lto \bb{S}(A_c)$\\
\hline
$?B_j$ & $\bb{S}(\pi) \lto \bb{S}(?B_d)$\\
\hline
$?B_o$ & $\bb{S}(\pi) \lto \bb{S}(?B_h)$\\
\hline
$?\neg A_p$ & $\bb{S}(\pi) \lto \bb{S}(?\neg A_g)$\\
\hline
$!A_q$ & $\bb{S}(\pi) \lto \bb{S}(!A_g)$\\
\hline
$A_k$ & $\bb{S}(\pi) \lto \bb{S}(A_c)$\\
\hline
$?B_l$ & $\bb{S}(\pi) \lto \bb{S}(?B_d)$\\
\hline
$?B_r$ & $\bb{S}(\pi) \lto \bb{S}(?B_h)$\\
\hline
$?B_s$ & $\bb{S}(\pi) \lto \bb{S}(?B_h)$\\
\hline
\end{tabular}
\end{center}
For every edge $e$ of $\pi'$ which is not displayed in \eqref{eq:ctr/!_reduction} there is a corresponding edge $e'$ of $\pi$. For these we set $\rho_e$ to be the projection $\rho_e: \bb{S}(\pi) \lto \bb{S}(A_{e'})$. We define $S_{\gamma}: \bb{S}(\pi) \lto \bb{S}(\pi')$ to be the morphism induced by the universal property of the product and the set $\{\rho_e\}_{e \in \mathcal{E}_{\pi'}}$.

For the edges $e$ of $\pi'$ displayed in \eqref{eq:ctr/!_reduction} we define a morphism $\tau_e$ to be a projection according to the following table:

\begin{center}
\begin{tabular}{| c | c |}
\hline
\textbf{Edge label} & $\tau_e$\\
\hline
$?A_a$ & $\bb{S}(\pi') \lto \bb{S}(?A_n)$\\
\hline
$?\neg A_b$ & $\bb{S}(\pi') \lto \bb{S}(!A_n)$\\
\hline
$A_c$ & $\bb{S}(\pi') \lto \bb{S}(A_i)$\\
\hline
$?B_d$ & $\bb{S}(\pi') \lto \bb{S}(?B_j)$\\
\hline
$?\neg A_f$ & $\bb{S}(\pi') \lto \bb{S}(?A_n)$\\
\hline
$!A_g$ & $\bb{S}(\pi') \lto \bb{S}(!A_n)$\\
\hline
$?B_h$ & $\bb{S}(\pi') \lto \bb{S}(?B_o)$\\
\hline
\end{tabular}
\end{center}

For every edge $e$ of $\pi$ which is not displayed in \eqref{eq:ctr/!_reduction} there is a corresponding edge $e'$ of $\pi'$. For these we set $\tau_e$ to be the projection $\tau_e: \bb{S}(\pi') \lto \bb{S}(A_{e'})$. We define $T_\gamma: \bb{Y}(\pi') \lto \bb{S}(\pi)$ to be the morphism induced by the universal property of the product and the set $\{\rho_e\}_{e \in \mathcal{E}_\pi}$.
\end{defn}

\begin{remark}
We only needed to introduce the restriction $\bb{S}(\pi')\vert_{\bb{Y}(\pi')}$ in Definition \ref{def:S_T} for $!/?$-reductions and $!/\weak$-reductions. It can be checked easily that given a reduction $\pi \stackrel{\gamma}{\lto} \pi'$ we have
\begin{equation}
    T_{\gamma}^{-1}(\bb{Y}(\pi)) \subseteq \bb{Y}(\pi')
\end{equation}
so that for every sequence of reductions
\begin{equation}
    \pi_1 \stackrel{\gamma_1}{\lto} \ldots \stackrel{\gamma_{n-1}}{\lto} \pi_{n-1}
\end{equation}
the morphisms $T_{\gamma_i}$ factor through the appropriate restrictions so that we end up with a composable sequence of morphisms
\begin{equation}
    T_{\gamma_1} \circ \ldots \circ T_{\gamma_{n-1}}.
\end{equation}
\end{remark}

\begin{thm}
\label{thm:main}
If $\gamma: \pi \lto \pi'$ is a reduction, then the morphisms $S_{\gamma}: \bb{S}(\pi) \lto \bb{S}(\pi'), T_{\gamma}: \bb{Y}(\pi') \lto \bb{S}(\pi)$ restrict to well defined morphisms
\begin{align*}
S_{\gamma}\vert_{\bb{X}(\pi)}: \bb{X}(\pi) &\lto \bb{X}(\pi')\\
T_{\gamma}\vert_{\bb{X}(\pi')}: \bb{X}(\pi') &\lto \bb{X}(\pi)
\end{align*}
which are mutually inverse isomorphisms.
\end{thm}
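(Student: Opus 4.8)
The plan is to prove the theorem by a case analysis over the five cut-reduction steps ($\ax/\cut$, $\otimes/\parr$, $!/?$, $\weak/!$, $\ctr/!$), working throughout with the functor of points. Since $\bb{X}(\pi)$ is the intersection inside $\bb{S}(\pi)$ of the (locally closed, representable) conditions imposed by the links of $\pi$, and likewise for $\pi'$, it suffices to check, for every $\mathbbm{k}$-algebra $R$, that $S_\gamma$ carries $R$-points of $\bb{X}(\pi)$ to $R$-points of $\bb{X}(\pi')$ and that $T_\gamma$ does the reverse, and that the two assignments are mutually inverse on $R$-points; equality of the resulting scheme morphisms then follows from Yoneda (each $\bb{X}$ is a subfunctor of the corresponding $\bb{S}$). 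The observation that organizes everything is that $\gamma$ changes only a bounded subconfiguration: there is a bijection between the edges (resp.\ links) of $\pi$ and $\pi'$ away from the redex, $S_\gamma$ and $T_\gamma$ are the identity on the corresponding coordinates, and the link conditions attached to unchanged links transfer verbatim. The theorem is therefore a purely local statement comparing the defining conditions of $\bb{X}(\pi)$ and $\bb{X}(\pi')$ near the cut, which also makes clear that this extends the multiplicative computation of \cite[Proposition 4.6]{AlgPnt}.

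For the multiplicative steps the local check is elementary. In the $\ax/\cut$ case the axiom and cut conditions give $x_{\neg A_a}=x_{A_b}=x_{\neg A_c}$, so the two merged edges collapse to the single edge $\neg A_d$ of $\pi'$, and the $\tau_e$ reinstate the deleted copies; both composites are visibly the identity. In the $\otimes/\parr$ case the conditions say that, on the edge $(A\otimes B)_f=(\neg B\parr\neg A)_g$ (identified via the canonical isomorphism $\bb{S}(A\otimes B)\cong\bb{S}(\neg B\parr\neg A)$), the Segre images $\operatorname{Seg}(x_{A_a},x_{B_b})$ and $\operatorname{Seg}(x_{\neg B_c},x_{\neg A_d})$ agree; as the Segre embedding is a closed immersion, hence a monomorphism, this is equivalent to $x_{A_a}=x_{\neg A_d}$ and $x_{B_b}=x_{\neg B_c}$, i.e.\ to the conditions of the two new cuts in $\pi'$, while $T_\gamma$ rebuilds the tensor/par coordinates using the same Segre maps. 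The $\weak/!$ step is degenerate: the weakening link forces $\bb{X}(\pi)=\varnothing$, and $\bb{Y}(\pi')$ and $\bb{X}(\pi')$ are empty as well (a weakening survives in $\pi'$), so the statement reduces to initiality of the empty scheme.

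The heart of the matter is the $!/?$-reduction, which is where I expect the real work. By the Promotion-link clause of Definition \ref{def:MELL_interpretation}, the promoted edge $!A_f$ of $\pi$ carries a point of a Hilbert scheme $H_S^h$ which — by the construction of the promotion function $f$ through the pullback $\bb{Y}_{\mathbf h}$ of $\bb{X}(\zeta)$ along the auxiliary coordinates — represents precisely the closed subscheme of $\bb{S}(A)$ obtained by specialising $\bb{X}(\zeta)$ at the values carried by the Pax doors. The dereliction link together with the cut then asserts exactly that the argument point $x_{\neg A_a}$ lies on that subscheme, which, chasing the defining pullback square, is equivalent to $(x_{\neg A_a};\text{Pax-values})$ being an $R$-point of $\bb{X}(\zeta)$; and this is precisely the condition that $\bb{X}(\pi')$ imposes once the box is opened and $\zeta$ is spliced in with its main conclusion cut against $\neg A_a$. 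The maps of Definition \ref{def:S_T} implement this translation on coordinates — $S_\gamma$ reads off the specialised configuration of $\zeta$, and $T_\gamma$ reassembles the Hilbert-scheme point via $f$ and $\phi_{\operatorname{P}^2}$ — and I would verify they are inverse by tracing an $R$-point of $\bb{X}(\pi')$ through $T_\gamma$ then $S_\gamma$ (and conversely), using representability of the Hilbert functor (Theorem \ref{thm:Hilbert_scheme}) to identify ``the ideal classified by $\operatorname{Spec}R\to H_S^h$'' with the family $\bb{Y}_{\mathbf h}$ on the nose. One point needing care is that $T_\gamma$ is only defined on $\bb{Y}(\pi')$, so I must check $\bb{X}(\pi')\subseteq\bb{Y}(\pi')$; this holds because the coordinate cut out by $\bb{Y}(\pi')$ is forced onto the Hilbert scheme by the dereliction link inside $\zeta$ that produces the relevant $?$-conclusion. (For notational simplicity one treats the single-Pax-door case; the general case is identical, with products over $i=1,\ldots,n$.)

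The $\ctr/!$-reduction runs on the same ideas, with the additional feature that the box is duplicated. In $\pi$ the contraction imposes the diagonal condition on the two $?\neg A$-edges feeding the cut; the key point is that the two promoted copies appearing in $\pi'$ are obtained by applying the same $f$ to the same (contracted) Pax-values, so this diagonal condition is automatically propagated, and $S_\gamma$, $T_\gamma$ restrict to the required isomorphism by applying the $!/?$ analysis twice and matching the resulting contraction on the Pax outputs. In all of this, the facts that $\bb{X}(\zeta)\to\prod_{i}\bb{S}(?A_i)\times\bb{S}(B)$ is a closed immersion (Lemma \ref{lem:well_defined}) and that $H_S^h$ represents the Hilbert functor are exactly what make the coordinate bookkeeping match up. The main obstacle is simply keeping that bookkeeping honest across the opening of a box — in particular verifying that no spurious degrees of freedom appear on either side, i.e.\ that the specialised $\bb{X}(\zeta)$ together with the reinstated cut has exactly the same $R$-points as the Hilbert-scheme point together with the dereliction condition.
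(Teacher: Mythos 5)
Your proposal is correct and follows essentially the same route as the paper: a local case analysis over the five reduction steps, with the multiplicative cases handled by collapsing diagonal/Segre conditions, the $\weak/!$ case by initiality of the empty scheme, and the $!/?$ and $\ctr/!$ cases resting on the identification (via representability of the Hilbert functor) of the point of $H_S^h$ classified by the promotion morphism with the specialised family $\bb{Y}_{\mathbf h}$ cut out by $\bb{X}(\zeta)$. Your functor-of-points phrasing is just the Yoneda dual of the paper's pullback-square and cube diagrams, and you correctly flag the same key inputs (Lemma \ref{lem:well_defined}, the pullback defining $\bb{Y}_{\mathbf h}$) and the same subtlety about the domain $\bb{Y}(\pi')$ of $T_\gamma$.
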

\begin{proof}
\textbf{$\gamma: \pi \lto \pi'$ is an $\ax/\cut$-reduction.}
We refer to Diagram \eqref{eq:ax/cut_red} and consider only this type of $\ax/\cut$-reduction.

It suffices to consider only the links involved in the reduction. Let $l$ denote the link in $\pi$ to which $\neg A_c$ is the conclusion, and let $l'$ denote the link in $\pi$ to which $\neg A_a$ is the premise. We define the following restrictions
\begin{align*}
S_{\gamma, l} &= S_{\gamma}\vert_{\Delta_{a,b} \cap \Delta_{b,c} \cap \bb{X}(l)}\\
S_{\gamma, l'} &= S_\gamma\vert_{\bb{X}(l') \cap \Delta_{a,b} \cap \Delta_{b,c}}
\end{align*}
and consider the following three dimensional diagram, ignoring the dashed line for now:
\begin{equation}
\begin{tikzcd}[column sep = small]
\label{eq:the_axiom_cube}
    & {\bb{X}(l') \cap \bb{}X(l)} && {\bb{X}(l)} \\
    {\bb{X}(l') \cap \Delta_{a,b} \cap \Delta_{b,c} \cap \bb{X}(l)} && {\Delta_{a,b} \cap \Delta_{b,c} \cap \bb{X}(l)} \\
    & {\bb{X}(l')} && {\bb{S}(\pi')} \\
    {\bb{X}(l') \cap \Delta_{a,b} \cap \Delta_{b,c}} && {\bb{S}(\pi)}
    \arrow[from=1-2, to=1-4]
    \arrow[from=1-2, to=3-2]
    \arrow[from=1-4, to=3-4]
    \arrow[dashed, from=2-1, to=1-2]
    \arrow[from=2-1, to=2-3]
    \arrow[from=2-1, to=4-1]
    \arrow["{S_{\gamma, l}}"', from=2-3, to=1-4]
    \arrow[from=2-3, to=4-3]
    \arrow[from=3-2, to=3-4]
    \arrow["{S_{\gamma, l'}}", from=4-1, to=3-2]
    \arrow[from=4-1, to=4-3]
    \arrow["{S_\gamma}"', from=4-3, to=3-4]
\end{tikzcd}
\end{equation}
The morphisms $S_{\gamma, l}$ and $S_{\gamma, l'}$ are isomorphisms with inverses given by $T_\gamma\vert_{\bb{X}(l)}, T_{\gamma}\vert_{\bb{X}(l')}$ respectively. The front-face and the back-face of the cube \eqref{eq:the_axiom_cube}, given as follows, are both pullback diagrams:
\[\begin{tikzcd}
    {\bb{X}(l') \cap \Delta_{a,b} \cap \Delta_{b,c} \cap \bb{X}(l)} & {\Delta_{a,b} \cap \Delta_{b,c} \cap \bb{X}(l)} & {\bb{X}(l') \cap \bb{}X(l)} & {\bb{X}(l)} \\
    {\bb{X}(l') \cap \Delta_{a,b} \cap \Delta_{b,c}} & {\bb{S}(\pi)} & {\bb{X}(l')} & {\bb{S}(\pi')}
    \arrow[from=1-1, to=1-2]
    \arrow[from=1-1, to=2-1]
    \arrow[from=1-2, to=2-2]
    \arrow[from=1-3, to=1-4]
    \arrow[from=1-3, to=2-3]
    \arrow[from=1-4, to=2-4]
    \arrow[from=2-1, to=2-2]
    \arrow[from=2-3, to=2-4]
\end{tikzcd}\]
This implies that the dashed arrow $\bb{X}(l') \cap \Delta_{a,b} \cap \Delta_{b,c} \cap \bb{X}(l) \lto \bb{X}(l') \cap \bb{X}(l)$ in \eqref{eq:the_axiom_cube} exists, and is an isomorphism with inverse given by $T_{\gamma}\vert_{\bb{X}(l') \cap \bb{X}(l)}$.

\textbf{$\gamma: \pi \lto \pi'$ is a $\otimes/\parr$-reduction.}
This case is similar to the previous so we omit the proof.

\textbf{$\gamma: \pi \lto \pi'$ is a $!/?$-reduction.}
We consider only the case where there is a restricted amount of Pax-links, and with Conclusion-links as displayed in Definition \ref{def:S_T}, but the general result follows easily from this.

We refer to Diagram \eqref{eq:?=!-red}. Let $\zeta$ denote the proof net inside the box. We have already seen in Definition \ref{def:MELL_interpretation} that if we write $\bb{S}(?B) = \coprod_{h \in \mathcal{H}} \bb{P}^{s_h}$, fix a Hilbert function $h_1 \in \mathcal{H}$, denote the number of unoriented atoms of $B$ by $m_1$, denote the number of unoriented atoms of $A$ by $m$, and let
\begin{equation}
S_1 = \mathbbm{k}[x_0, \ldots, x_{2^{m_1}-1}],\quad S = \mathbbm{k}[x_1, \ldots, x_{2^m - 1}],
\end{equation}
then we can construct a morphism
\begin{equation}
f: H_{S_1}^{h_1} \lto H_S^h
\end{equation}
for some Hilbert function $h$. We have also shown in Definition \ref{def:MELL_interpretation} how to construct a morphism $\bb{U}_h \lto H_S^h \times \bb{S}(A)$ which we post-compose with the product of the composite $H_S^h \lto \bb{P}^{s_h} \lto \bb{S}(!A)$ and the identity on $\bb{S}(A)$ to obtain $\iota_h: \bb{U}_h \lto \bb{S}(!A) \times \bb{S}(A)$. On the other hand, consider the closed immersion
\begin{equation}
\label{eq:dirty_writing_trick}
\Gamma_f \lto H_{S_1}^{h_1} \times H_S^h
\end{equation}
of the graph $\Gamma_f$ of $f$. Associated to $h_1, h$ are fixed choices of closed immersions $H_{S_1}^{h_1} \lto \bb{P}^{s_{h_1}}, H_S^h \lto \bb{P}^{s_h}$ which we can post-compose with the canonical inclusions to obtain $H_{S_1}^{h_1} \lto \bb{S}(?B), H_S^h \lto \bb{S}(!A)$. Post-composing \eqref{eq:dirty_writing_trick} with the product of these yields
\begin{equation}
o: \Gamma_f \lto \bb{S}(?B) \times \bb{S}(!A).
\end{equation}
Denote by $\rho: \bb{S}(\zeta) \lto \bb{S}(?B) \times \bb{S}(A)$ the canonical projection and $\rho_\ast$ the pushforward. We claim that the following is a pullback diagram:
\begin{equation}
\label{eq:actual_pullback}
\begin{tikzcd}
	{\rho_\ast(\bb{X}(\zeta)) \cap \Gamma_f} && {\bb{U}_h \times \bb{S}(?B)} \\
	{\bb{S}(A) \times \Gamma_f} && {\bb{S}(A) \times \bb{S}(?B) \times \bb{S}(!A)}
	\arrow[from=1-1, to=1-3]
	\arrow[from=1-1, to=2-1]
	\arrow["{\iota_h \times \operatorname{id}}"', from=1-3, to=2-3]
	\arrow["{\operatorname{id} \times o}", from=2-1, to=2-3]
\end{tikzcd}
\end{equation}
It suffices to show that the following is a pullback diagram:
\[\begin{tikzcd}
	{\rho_\ast(\bb{X}(\zeta))} & {\bb{U}_h} \\
	{H_{S_1}^{h_1} \times \bb{S}(A)} & {H_S^h \times \bb{S}(A)}
	\arrow[from=1-1, to=1-2]
	\arrow[from=1-1, to=2-1]
	\arrow["{\iota_h}", from=1-2, to=2-2]
	\arrow["{f \times \operatorname{id}}"', from=2-1, to=2-2]
\end{tikzcd}\]
This can be shown by taking open affine charts of $H_{S_1}^{h_1}, H_S^h$ and using the fact that the tensor product induces pullbacks in the category $\mathbbm{k}-\underline{\operatorname{Alg}}$ of $\mathbbm{k}$-algebras.

\textbf{$\gamma: \pi \lto \pi'$ is a $\weak/!$-reduction.} This case is trivial as we are mapping empty schemes to empty schemes via morphisms uniquely defined by the property that their domain is the initial object in the category of schemes over $\mathbbm{k}$.

\textbf{$\gamma: \pi \lto \pi'$ is a $\ctr/!$-reduction.} We refer to Diagram \eqref{eq:ctr/!_reduction}. Due to the diagonals at the Axiom- and Cut-links it suffices to consider only the displayed Promotion-links, Pax-links, and the displayed Contraction-link of $\pi'$.

Let $l_!, l_{\pax}$ respectively denote the displayed Promotion and Pax-links of $\pi$. Let $l_!^L$, $l_{\pax}^L$, $l_!^R$, $L_{\pax}^R$, $l_{\ctr}$ respectively denote the Promotion-link of $\pi'$ displayed on the left, the Pax-link of $\pi'$ displayed on the left, the Promotion-link of $\pi'$ displayed on the right, the Pax-link of $\pi'$ displayed on the right, and the Contraction-link of $\pi'$.

By inspection of the definition of $S_\gamma, T_\gamma$, we obtain the following commuting diagram
\[\begin{tikzcd}[column sep = small]
    {\bb{X}(l_!) \cap \bb{X}(l_{\pax})} & {\bb{X}(l_!^L) \cap \bb{X}(l_{\pax}^L) \cap \bb{X}(l_!^R) \cap \bb{X}(l_{\pax}^R) \cap \bb{X}(l_{\ctr})} & {\bb{X}(l_!) \cap \bb{X}(l_{\pax})} \\
    {\bb{X}(l_!) \cap \bb{X}(l_{\pax})} & {\Delta_{\bb{X}(l_!) \cap \bb{X}(l_{\pax})}} & {\bb{X}(l_!) \cap \bb{X}(l_{\pax})}
    \arrow[from=1-1, to=1-2]
    \arrow[from=1-2, to=1-3]
    \arrow[from=2-1, to=1-1]
    \arrow[from=2-1, to=2-2]
    \arrow["o", from=2-2, to=1-2]
    \arrow[from=2-2, to=2-3]
    \arrow[from=2-3, to=1-3]
\end{tikzcd}\]
where
\begin{equation}
\Delta_{\bb{X}(l_!) \cap \bb{X}(l_{\pax})} \lto \big(\bb{S}(A_c) \times \bb{S}(?B_d) \times \bb{S}(!A_g) \times \bb{S}(?B_h)\big)^2
\end{equation}
denotes the diagonal which factors through $o$. All vertical arrows are isomorphisms, and the bottom horizontal composition is the identity. The argument for the other composite is similar.
\end{proof}

\subsection{An example}
The Church numerals provide an interesting class of shallow proofs. It was explained in the Introduction how the exponential fragment of linear logic provides equations $x - \phi y, y - \psi z$, for example. In this setting the Church numeral $\underline{2}_X$ (for $X$ an atomic formula) will give rise to these exact equations once appropriate localisations have been chosen. Moreover we obtain the equation $\phi - \psi$. A consequence of setting these equations to zero is that $x = \phi^2 z$, where the power of 2 reflects the fact that we took the Church numeral two. Indeed, the Church numeral $\underline{n}_X$ gives rise to the equation $x = \phi^n z$.

\subsubsection{Cutting $\underline{2}_X$ against $\underline{0}_X$}
\label{ex:Grassmann_example}
Consider the following proof net $\pi$, which is the Church numeral $\underline{2}_X$ cut against a simple proof net given by appending a Promotion-link to the Church numeral $\underline{0}_X$. We have labelled the formulas artificially; each $X_p$ means the atomic formula $X$.
\[\begin{tikzcd}[column sep = small]
    & \ax && \ax && \ax && \bullet &&&&&& \bullet \\
    {\operatorname{c}} && \otimes && \otimes && {\operatorname{c}} &&&& \ax \\
    && {?} && {?} &&&&&& \parr \\
    &&& {\ctr} &&&& \bullet &&& {\prom} &&& \bullet \\
    &&&&&& \cut
    \arrow["{\neg X_a}"', curve={height=12pt}, from=1-2, to=2-1]
    \arrow["{X_b}", from=1-2, to=2-3]
    \arrow["{\neg X_c}"', from=1-4, to=2-3]
    \arrow["{X_d}", from=1-4, to=2-5]
    \arrow["{\neg X_e}"', from=1-6, to=2-5]
    \arrow["{X_f}", curve={height=-12pt}, from=1-6, to=2-7]
    \arrow[no head, from=1-8, to=1-14]
    \arrow[no head, from=1-8, to=4-8]
    \arrow["{X_{g_1} \otimes \neg X_{g_2}}", from=2-3, to=3-3]
    \arrow["{X_{h_1} \otimes \neg X_{h_2}}", from=2-5, to=3-5]
    \arrow["{\neg X_o}", curve={height=-12pt}, from=2-11, to=3-11]
    \arrow["{X_n}"', curve={height=12pt}, from=2-11, to=3-11]
    \arrow["{?(X_{i_1} \otimes \neg X_{i_2})}"', from=3-3, to=4-4]
    \arrow["{?(X_{j_1} \otimes \neg X_{j_2})}", from=3-5, to=4-4]
    \arrow["{X_{m_1} \parr \neg X_{m_2}}", from=3-11, to=4-11]
    \arrow["{?(X_{k_1} \otimes \neg X_{k_2})}"', curve={height=12pt}, from=4-4, to=5-7]
    \arrow[no head, from=4-8, to=4-11]
    \arrow[no head, from=4-11, to=4-14]
    \arrow["{!(X_{l_1} \parr \neg X_{l_2})}", curve={height=-12pt}, from=4-11, to=5-7]
    \arrow[no head, from=4-14, to=1-14]
\end{tikzcd}\]
Associated to the Axiom-links are the following projective schemes:
\begin{equation*}
\bb{S}(\neg X_a) = \bb{S}(X_b) = \bb{S}(\neg X_c) = \bb{S}(X_d) = \bb{S}(\neg X_e) = \bb{S}(X_f) = \bb{S}(X_n) = \bb{S}(\neg X_o) = \bb{P}^1.
\end{equation*}
For the Tensor- and Par-links, we should be considering the Segre embedding $\bb{P}^1 \times \bb{P}^1 \lto \bb{P}^3$, so that the interpretation of each non-atomic linear formula of $\pi$ is $\bb{P}^3$ but to make the ideas of the model more transparent within this example, we will directly consider the closed subscheme $\bb{P}^1 \times \bb{P}^1$:
\begin{equation*}
\bb{S}(X_{g_1} \otimes \neg X_{g_2}) = \bb{S}(X_{h_1} \otimes \neg X_{h_2}) = \bb{S}(X_{m_1} \parr \neg X_{m_2}) = \bb{P}^1 \times \bb{P}^1.
\end{equation*}
For each of the linear formulas we consider a corresponding graded $\mathbbm{k}$-algebra. For instance, associated to the formula $\neg X_a$ is the graded $\mathbbm{k}$-algebra $\mathbbm{k}[X_a', X_a]$. Since the variable $X$ is consistent throughout all of the formulas, we will use the algebra $\mathbbm{k}[a',a]$ in place of $\mathbbm{k}[X_a', X_a]$, and similarly for the other variables.

Let $S = \mathbbm{k}[z_1', z_1] \times \mathbbm{k}[z_2', z_2]$. Then for each Hilbert function $h \in \mathcal{H}$ we have a fixed choice of closed immersion of the Hilbert scheme $H_S^h$ given by Proposition \ref{prop:big_assumption} into some projective space $\bb{P}^{s_h}$. We take the disjoint union of the codomains of these:
\begin{align*}
\bb{S}(?(X_{i_1} \otimes \neg X_{i_2})) &= \bb{S}(?(X_{j_1} \otimes \neg X_{j_2}))\\
= \bb{S}(?(X_{k_1} \otimes \neg X_{k_2})) &= \bb{S}(!(X_{l_1} \otimes \neg X_{l_2})) = \coprod_{h \in \mathcal{H}}\bb{P}^{s_h}.
\end{align*}
The interior of the box of $\pi$ determines a point in $\coprod_{h \in \mathcal{H}}\bb{P}^{s_h}$ which is inside the closed subscheme $H_S^{h^\ast} \subseteq \bb{P}^{s_h}$ for some particular Hilbert function $h^\ast$. So, for this example we can restrict to the particular connected component of the Hilbert scheme determined by this Hilbert function, which we now calculate.

Consider $\underline{0}_X$
\[\begin{tikzcd}[column sep = small]
    \ax \\
    \parr \\
    {\operatorname{c}}
    \arrow["{X_n}"', curve={height=12pt}, from=1-1, to=2-1]
    \arrow["{\neg X_o}", curve={height=-12pt}, from=1-1, to=2-1]
    \arrow["{X_{m_1} \parr \neg X_{m_2}}", from=2-1, to=3-1]
\end{tikzcd}\]
We build the closed subscheme $\bb{X}(\underline{0}_X)$. From the Axiom-link $l_{\ax}$ we have the diagonal $\bb{X}(l_{\ax}) = \Delta_{n,o} \lto \bb{P}^1 \times \bb{P}^1$. If we consider the projection $\rho: \bb{S}(\underline{0}_X) \lto \bb{P}^1 \times \bb{P}^1$ then the composite
\begin{equation}
\bb{X}(\underline{0}_X) \lto \bb{S}(\underline{0}_X) \lto \bb{S}(X_{m_1} \parr \neg X_{m_2}) = \bb{P}^1 \times \bb{P}^1
\end{equation}
is isomorphic to the diagonal
\begin{equation}
\bb{P}^1 \stackrel{\Delta}{\lto} \bb{P}^1 \times \bb{P}^1.
\end{equation}
The ideal
\begin{equation}
\label{eq:complicated_diagonal}
I = (m_1m_2' - m_1'm_2) \subseteq \mathbbm{k}[m_1',m_1] \times_{\mathbbm{k}} \mathbbm{k}[m_2', m_2],
\end{equation}
with $S = \mathbbm{k}[m_1',m_1] \times_{\mathbbm{k}}\mathbbm{k}[m_2',m_2]$,
is such that $\operatorname{Proj}(S/I) \cong \bb{X}(\underline{0}_X)$ (as closed subschemes of $\bb{P}^1 \times \bb{P}^1$). Conceptually, this ideal may be thought of as its corresponding counterpart obtained by dividing by the primed variables. More specifically, we have
\begin{equation}
\Big(\frac{m_1m_2'}{m_1'm_2'} - \frac{m_1'm_2}{m_1'm_2'}\Big) \subseteq S_{(m_1'm_2')}.
\end{equation}
Carrying this through the composition of $\mathbbm{k}$-algebra isomorphisms 
\begin{equation}
    S_{(m_1'm_2')} \lto \mathbbm{k}[m_1/m_1',m_2/m_2'] \lto \mathbbm{k}[m_1, m_2]
\end{equation}
determined by the rules
\begin{equation}
\frac{m_1m_2'}{m_1'm_2'} \longmapsto m_1/m_1' \lto m_1,\quad \frac{m_1'm_2}{m_1'm_2'} \longmapsto m_2/m_2' \lto m_2
\end{equation}
we obtain the ideal
\begin{equation}
\label{eq:revealing_diagonal}
(m_1 - m_2) \subseteq \mathbbm{k}[m_1, m_2].
\end{equation}
So, we can think of \eqref{eq:complicated_diagonal} as the equation ``$m_1 = m_2$'', see Section \ref{sec:MLL_relation} for more details.

We saw in Example \ref{ex:Gotmann_number_ex} that the Hilbert function of $I \subseteq S$ is $h^\ast(d) = 2d + 1$, and that the Gotzmann number $G(I)$ of $I$ is 2. The degree $2$ component of the algebra corresponding to $\bb{P}^3$ maps onto the degree $1$ component of the algebra corresponding to $\bb{P}^1 \times \bb{P}^1$. The Hilbert scheme $H_S^{h^\ast}$ is by Proposition \ref{prop:big_assumption} therefore a closed subscheme of $G_{S_{1}}^{h^\ast(1)} = G_{4}^3$. We identify the degree $1$ component $S_1$ of $S$ with $\mathbbm{k}^{4}$ via the isomorphism defined by linearity and the following assignments, where $e_1, \ldots, e_{4}$ are the standard basis vectors for $\mathbbm{k}^{4}$:
\begin{align}
\label{eq:iso_choice}
m_1'm_2' &\longmapsto e_1 & m_1m_2' &\longmapsto e_2 & m_1'm_2 &\longmapsto e_3 & m_1m_2 &\longmapsto e_4.
\end{align}
Let $\eta: H_S^{h^\ast} \lto G_{4}^3$ denote this closed immersion. Recall from Lemma \ref{lem:Grassmann_local_rep} that for any size $k$ subset $B$ of $\{e_1, \ldots, e_{4}\}$ the open subset $G_{4\backslash B}^3 \subseteq G_{4}^3$ is representable. Consider the set $B = \{e_3\}$ which corresponds to $\{m_1'm_2\} \subseteq S_1$. There exists the following pullback diagram:
\[\begin{tikzcd}
    {\eta^{-1}(H^{h^\ast}_S)} & {G_{4\backslash B}^3} \\
    {H_{S}^{h^\ast}} & {G_{4}^3}
    \arrow["{\hat{\eta}}", from=1-1, to=1-2]
    \arrow[from=1-1, to=2-1]
    \arrow[from=1-2, to=2-2]
    \arrow["\eta", from=2-1, to=2-2]
\end{tikzcd}\]
By Lemma \ref{lem:Grassmann_local_rep} $G_{4\backslash B}^3$ is represented by $\operatorname{Spec}\mathcal{A}$ where $\mathcal{A}$ is the following ring:
\begin{equation}
\mathcal{A} = \mathbbm{k}[\{y_{i} \mid 1 \leq i \leq 3\}].
\end{equation}
Since $\eta$ is a closed immersion, it follows that $\hat{\eta}$ is and so there exists an ideal $\mathcal{J} \subseteq S$ such that $\eta^{-1}(H_S^{h^\ast}) \cong \operatorname{Spec}\mathcal{A}/\mathcal{J}$. By representability of $\underline{H_S^{h^\ast}}$ and $\underline{G_{4}^3}$ the morphism $\eta$ corresponds to a natural transformation $\underline{\eta}$ between functors $\underline{\eta}: \underline{H_S^{h^\ast}} \lto \underline{G_{4}^3}$. Let $R$ be a $\mathbbm{k}$-algebra, the function $\underline{\eta}_R: \underline{H_S^{h^\ast}}(R) \lto \underline{G_{4}^3}(R)$ maps a homogeneous ideal $L \subseteq R \otimes_{\mathbbm{k}}S$ to the submodule $L_1 \subseteq R \otimes_{\mathbbm{k}}S_1 \cong R^{4}$. A homomorphism $\mathcal{A}/\mathcal{J} \lto R$ is given by a collection of coefficients $\{\alpha_{p} \in R\}_{1 \leq p \leq 3}$ satisfying the equations of $\mathcal{J}$. These equations determine the Hilbert scheme as a subscheme of the Grassmann scheme, and so for the sake of simplicity we can ignore them and deal only with the coefficients $\{\alpha_{p} \in R\}_{1 \leq p \leq 3}$, i.e. $\mathbbm{k}$-algebra homomorphisms $\mathcal{A} \lto R$, i.e. points of the Grassmann scheme $\operatorname{Spec}R \lto G_{4\backslash B}^3$.

We have the following equation in $\mathcal{A}/I$ by \eqref{eq:iso_choice}.
\begin{align*}
m_1m_2' = m_1'm_2
\end{align*}
which corresponds to the following subspace of $\mathbbm{k}^{4}$:
\begin{equation}
\operatorname{Span}_{\mathbbm{k}}\{e_2 - e_3 \}.
\end{equation}
So, we define a function $\mathcal{A} \lto \mathbbm{k}$ as the $\mathbbm{k}$-algebra homomorphism generated by the following rules:
\begin{equation}
y_{1} \longmapsto 0,\quad y_2 \longmapsto 1, \quad y_3 \longmapsto 0.
\end{equation}
These equations come from the fact that in $\mathbbm{k}^4/\operatorname{Span}_{\mathbbm{k}}\{e_2 - e_3 \}$ we have the equation $e_3 = y_0 e_1 + y_1 e_2 + y_3 e_4$, if $y_1 = y_3 = 0, y_2 = 1$. Thus, the ideal corresponding to the Promotion-link is
\begin{equation}
(y_1, y_2 - 1, y_3) \subseteq \mathcal{A}.
\end{equation}
Now we consider the Dereliction-link:
\[\begin{tikzcd}
    \vdots \\
    {?} \\
    \vdots
    \arrow["{X_{g_1} \otimes \neg X_{g_2}}", from=1-1, to=2-1]
    \arrow["{?(X_{i_1} \otimes \neg X_{i_2})}", from=2-1, to=3-1]
\end{tikzcd}\]
For the Hilbert function $h^\ast$ we have again that $G_{4\backslash B}^3$ is represented by another copy of $\mathcal{A}$:
\begin{equation}
\mathcal{A}' = \mathbbm{k}[y_{i}' \mid 1 \leq i \leq 3].
\end{equation}
There is a universal subspace of $(\mathcal{A}')^{4}$ with basis $B$ given as follows:
\begin{equation}
\label{eq:Universal_dereliction}
\operatorname{Span}_{\mathcal{A}'}\{e_3 - y_1'e_1 - y_2'e_2 - y_3'e_4\} \subseteq (\mathcal{A}')^{4}.
\end{equation}
This translates through \eqref{eq:iso_choice} (with $m_1',m_1,m_2',m_2$ respectively replaced by $g_1',g_1,g_2',g_2$) to
\begin{equation}
(g_1'g_2 - y_1' g_1'g_2' - y_2' g_1g_2' - y_3' g_1g_2) \subseteq \mathcal{A}'[g_1', g_1] \times_{\mathbbm{k}} \mathcal{A}'[g_2', g_2].
\end{equation}
Similarly, for the other Dereliction-link we have a third copy of $\mathcal{A}$:
\begin{equation}
\mathcal{A}'' = \mathbbm{k}[y_i'' \mid 1 \leq i \leq 3]
\end{equation}
and the universal subspace
\begin{equation}
\operatorname{Span}_{\mathcal{A}''}\{e_3 - y_1''e_1 - y_2''e_2 - y_3''e_4)\} \subseteq (\mathcal{A}'')^{4}
\end{equation}
with corresponding ideal
\begin{equation}
(h_1'h_2 - y_1'' h_1'h_2' - y_2'' h_1h_2' - y_3'' h_1h_2) \subseteq \mathcal{A}''[h_1', h_1] \times_{\mathbbm{k}} \mathcal{A}''[h_2', h_2].
\end{equation}
The Contraction-link introduces a fourth copy of $\mathcal{A}$:
\begin{equation}
\mathcal{A}''' = \mathbbm{k}[y_i''' \mid 1 \leq i \leq 3]
\end{equation}
and contributes the following ideal
\begin{equation}
(y'_i - y'''_i, y''_i - y'''_i)_{1 \leq i \leq 3} \subseteq \mathcal{A}' \otimes_{\mathbbm{k}} \mathcal{A}'' \otimes_{\mathbbm{k}}\mathcal{A}'''.
\end{equation}
Finally the Cut-link contributes the ideal
\begin{equation}
(y'''_i - y_i)_{1 \leq i \leq 3} \subseteq \mathcal{A}''' \otimes_{\mathbbm{k}} \mathcal{A}.
\end{equation}
All that remains to be considered is the linear component of the proof. The Axiom-link with conclusions $\neg X_a, X_b$ is interpreted as the diagonal $\Delta \lto \bb{P}^1 \times \bb{P}^1$ which is given by the following ideal:
\begin{equation}
(ab' - a'b) \subseteq \mathbbm{k}[a', a] \times_{\mathbbm{k}} \mathbbm{k}[b',b].
\end{equation}
The other Axiom-links are treated similarly.

The Tensor-link with conclusion $X_{g_1} \otimes \neg X_{g_2}$ contributes the following ideal:
\begin{align*}
&(g_1g_2'b'c' - g_1'g_2'bc', g_1'g_2b'c' - g_1'g_2'b'c, g_1g_2b'c' - g_1'g_2'bc)\\
&\subseteq \big(\mathbbm{k}[g_1',g_1] \times_{\mathbbm{k}} \mathbbm{k}[g_2', g_2]\big) \times_{\mathbbm{k}} \mathbbm{k}[b', b] \times_{\mathbbm{k}} \mathbbm{k}[c',c].
\end{align*}
Again, we can think of this as the corresponding ideal given by dividing by the primed variables, given as follows:
\begin{equation}
(g_1 - b, g_2 - c, g_1g_2 - bc) = (g_1 - b, g_2 - c) \subseteq \mathbbm{k}[g_1, g_2, b, c].
\end{equation}
This reflects the logical structure that the premises $X_b,\neg X_c$ of the Tensor-link have respective corresponding conclusions $X_{g_1}, \neg X_{g_2}$.

The other Tensor-link and the Par-link are similar. We thus have the following set of equations:
\begin{align*}
ab' - a'b,\quad cd' - c'd,\quad ef'-e'f,\quad no' - n'o,
\end{align*}
\begin{align*}
g_1g_2'b'c' &- g_1'g_2'bc', & g_1'g_2b'c' &- g_1'g_2'b'c, & g_1g_2b'c' &- g_1'g_2'bc,\\
h_1h_2'd'e' &- h_1'h_2'de', & h_1'h_2d'e' &- h_1'h_2' d'e, & h_1h_2d'e' &- h_1'h_2'de,\\
m_1m_2'n'o' &- m_1'm_2'no', & m_1'm_2n'o' &- m_1'm_2'n'o, & m_1m_2n'o' &- m_1'm_2'no,
\end{align*}
\begin{equation*}
g_1'g_2 - y_1' g_1'g_2' - y_2' g_1g_2' - y_3' g_1g_2,
\end{equation*}
\begin{equation*}
h_1'h_2 - y_1'' h_1'h_2' - y_2'' h_1h_2' - y_3'' h_1h_2,
\end{equation*}
\begin{equation*}
y_{1},\quad y_{2} - 1,\quad y_{3},
\end{equation*}
\begin{align*}
& y_i - y'''_i,\text{ for }1 \leq i \leq 3,\\
& y'_i - y'''_i,\text{ for }1 \leq i \leq 3,\\
& y''_i - y'''_i,\text{ for }1 \leq i \leq 3.
\end{align*}
To understand these equations, we can localise at all of the primed variables (except for the $y$ variables) and obtain the following set of polynomials:
\begin{equation*}
a - b,\quad c - d,\quad e - f,\quad n - o,
\end{equation*}
\begin{align*}
g_1 &- b, \quad g_2 - c,\\
h_1 &- d, \quad h_2 - e,\\
m_1 &- n, \quad m_2 - o,
\end{align*}
\begin{equation*}
g_2 - y_1' - y_2'g_1 - y_3'g_1g_2,
\end{equation*}
\begin{equation*}
h_2 - y_1'' - y_2''h_1 - y_3''h_1h_2,
\end{equation*}
\begin{equation*}
y_{1},\quad y_{2} - 1,\quad y_{3},
\end{equation*}
\begin{align*}
& y_i - y'''_i,\text{ for }1 \leq i \leq 3,\\
& y'_i - y'''_i,\text{ for }1 \leq i \leq 3,\\
& y''_i - y'''_i,\text{ for }1 \leq i \leq 3.
\end{align*}

\begin{remark}
\label{rmk:eq_btw_eq}
The deduction rules involving the exponentials contribute ``equations between equations'' to the geometry of $\bb{X}(\pi)$ in a sense we now explain. Consider the two equations
\begin{align}
\label{eq:equations_(between_equations)}
g_2 &- y_1' - y_2'g_1 - y_3'g_1g_2,\\
h_2 &- y_1'' - y_2''h_1 - y_3''h_1h_2.
\end{align}
These are the equations pertaining to the Dereliction-links of $\pi$. The variables
\begin{equation}
y_1',y_2',y_3',y_1'',y_2'',y_3''
\end{equation}
have constraints put upon them by the Contraction-link which introduces the following:
\begin{align}
y_1' &= y_1''', & y_2' &= y_2''', & y_3' &= y_3'''\\
y_1'' &= y_1''', & y_2'' &= y_2''', & y_3'' &= y_3'''
\end{align}
which imposes the following equations in the quotient:
\begin{align}
y_1' &= y_1'', & y_2' &= y_2'', & y_3' &= y_3''.
\end{align}
That is, the normal vector of the two linear spaces \eqref{eq:equations_(between_equations)} are set to be equal via the Contraction-link. Notice that this does \emph{not} impose that $g_2 = h_1$. This equation \emph{does} hold, but due to the Axiom-link with conclusions $\neg X_c, X_d$, and the two Tensor-links outside of the box. So, the exponential fragment of shallow proofs only make identifications between the \emph{coefficients} of polynomials. The linear component of the proof makes identifications between the variables.
\end{remark}

\begin{remark}
\label{rmk:elimination}
It is interesting to note that Section \ref{ex:Grassmann_example} seems to extend the theory of \cite{AlgPnt} which relates cut-elimination to elimination theory. Define all variables pertaining to edges which lie above the Cut-link to be \emph{elimination variables}, and the remaining two variables $a, f$ to be \emph{non-elimination variables}. Using software algebra, performing the Buchberger Algorithm on the final set of polynomials given in the example yields the polynomial $a - f$, which is the result of localising the diagonal $\Delta \lto \bb{P}^1 \times \bb{P}^1$ which is the closed immersion corresponding to the normal form of $\pi$:
\[\begin{tikzcd}[column sep = small]
    & \ax \\
    {\operatorname{c}} && {\operatorname{c}}
    \arrow["{\neg X_a}"', curve={height=12pt}, from=1-2, to=2-1]
    \arrow["{X_f}", curve={height=-12pt}, from=1-2, to=2-3]
\end{tikzcd}\]
\end{remark}

\subsection{Relation to MLL model}
\label{sec:MLL_relation}
In \cite{AlgPnt} we attributed to an Axiom-link
\begin{equation}
\label{eq:ax_link}
\begin{tikzcd}[column sep = small]
	& \ax \\
	\vdots && \vdots
	\arrow["{\neg X_1}"', curve={height=12pt}, from=1-2, to=2-1]
	\arrow["{X_2}", curve={height=-12pt}, from=1-2, to=2-3]
\end{tikzcd}
\end{equation}
the equation
\begin{equation}
    \label{eq:algpnt_equation}
    X_1 - X_2 \in \mathbbm{k}[X_1,X_2].
\end{equation}
In this paper, we have attributed to \eqref{eq:ax_link} the composite of closed immersions
\begin{equation}
    \Delta \lto \bb{P}^1 \times \bb{P}^1 \lto \bb{P}^3
\end{equation}
where $\Delta \lto \bb{P}^1 \times \bb{P}^1$ is the diagonal and $\bb{P}^1 \times \bb{P}^1 \lto \bb{P}^3$ is the Segre embedding. We remark that
\begin{equation}
\operatorname{Proj}\big((\mathbbm{k}[X_1',X_1] \times_{\mathbbm{k}} \mathbbm{k}[X_2', X_2])/(X_1X_2' - X_1'X_2)\big) \cong \Delta
\end{equation}
where we have made use of the cartesian product $\times_{\mathbbm{k}}$, see Definition \ref{def:cartesian_product}. Thus, algebraically, we can think of the present model as attributing to \eqref{eq:ax_link} the equation
\begin{equation}
\label{eq:this_paper_eq}
    X_1X_2' - X_1'X_2 \in \mathbbm{k}[X_1',X_1] \times_{\mathbbm{k}} \mathbbm{k}[X_2', X_2].
\end{equation}
We saw in Section \ref{ex:Grassmann_example} that we can recover \eqref{eq:algpnt_equation} from \eqref{eq:this_paper_eq} by dividing by the primed variables. That is, the composition of isomorphisms
\begin{equation}
    \Big(\mathbbm{k}[X_1',X_1] \times_{\mathbbm{k}}\mathbbm{k}[X_2', X_2]\Big)_{(X_1'X_2')} \lto \mathbbm{k}[X_1/X_1', X_2/X_2'] \lto \mathbbm{k}[X_1, X_2]
\end{equation}
determined by the rules
\begin{equation}
    \frac{X_1X_2'}{X_1'X_2'} \longmapsto X_1/X_1' \longmapsto X_1,\quad \frac{X_1'X_2}{X_1'X_2'} \longmapsto X_2/X_2' \longmapsto X_2
\end{equation}
maps \eqref{eq:this_paper_eq} to \eqref{eq:algpnt_equation}. Lemma \ref{lem:relation} generalises this observation to all of MLL and make precise the claim that Theorem \ref{thm:main} generalises \cite[Proposition 4.6]{AlgPnt}, we make use of the notation there.\\

Let $A$ be a linear formula with $n$ unoriented atoms $\{X_1, \ldots, X_n\}$ so that $P_A = \mathbbm{k}[X_1, \ldots, X_n]$, where $P_A$ is the polynomial ring associated to $A$ of \cite[Definition 3.14]{AlgPnt}. This is the coordinate ring of the scheme $\bb{A}_{\mathbbm{k}}^n$ which is an open subset of the product $(\bb{P}_{\mathbbm{k}}^1)^n$. Let $\pi$ be a proof in multiplicative linear logic.

\begin{defn}
    For all $n>0$ let $S_n$ denote the $\mathbbm{k}$-algebra
    \begin{equation}
        S_n = \mathbbm{k}[x_1,x_1'] \times_{\mathbbm{k}} \ldots \times_{\mathbbm{k}}\mathbbm{k}[x_n,x_n'].
    \end{equation}
    For each $n>0$, fix an isomorphism
    \begin{equation}
        \Phi_n: \operatorname{Proj}S_n \lto (\bb{P}_{\mathbbm{k}}^1)^n.
    \end{equation}
    We also fix isomorphisms
    \begin{equation}
        \Psi_n: \operatorname{Spec}(\mathbbm{k}[x_1,\ldots, x_n]) \lto \operatorname{Spec}\Big(\big(S_n)_{(x_1'\ldots x_n')}\Big).
    \end{equation}
    We have a composition
\begin{equation}
    \bb{A}_{\mathbbm{k}}^n = \operatorname{Spec}(\mathbbm{k}[x_1,\ldots, x_n]) \stackrel{\psi_n}{\lto} \operatorname{Spec}\Big(\big(S_n)_{(x_1'\ldots x_n')}\Big) \stackrel{\Phi_n}{\lto} (\bb{P}_{\mathbbm{k}}^1)^n \lto \bb{S}(A)
\end{equation}
where the final map is the Segre embedding. If $\pi$ is a proof in multiplicative linear logic, we take the product of these over all formulas $A_e$ labelling edges $e$ of $\pi$ to obtain an inclusion
\begin{equation}
    \iota_{\pi}:\operatorname{Spec}P_\pi \cong \prod_{e \in \mathcal{E}_\pi}\bb{A}_{\mathbbm{k}}^{|U_{A_e}|} \lto \bb{S}(\pi)
\end{equation}
where $P_\pi$ is the polynomial ring associated to $\pi$ as defined in \cite[Definition 3.14]{AlgPnt} $U_{A_e}$ denotes the set of unoriented atoms of $A_e$ as defined in \cite[Definition 3.7]{AlgPnt}.
\end{defn}

\begin{defn}
    Given a reduction $\gamma: \pi \lto \pi'$ of proofs $\pi, \pi'$ in multiplicative linear logic, let $S^\bullet, T^\bullet$ denote the $\mathbbm{k}$-algebra homomorphisms defined as $S,T$ in \cite[Proposition 4.6]{AlgPnt}.
\end{defn}

\begin{lemma}
\label{lem:relation}
    For any reduction $\gamma: \pi \lto \pi'$ where $\pi, \pi'$ are proofs in multiplicative linear logic, there exists the following commuting diagrams:
\[\begin{tikzcd}
	{\bb{S}(\pi)} && {\bb{S}(\pi')} & {\bb{S}(\pi')} && {\bb{S}(\pi)} \\
	{\operatorname{Spec}P_\pi} && {\operatorname{Spec}P_{\pi'}} & {\operatorname{Spec}P_{\pi'}} && {\operatorname{Spec}P_\pi}
	\arrow["S", from=1-1, to=1-3]
	\arrow["T", from=1-4, to=1-6]
	\arrow["{\iota_\pi}", from=2-1, to=1-1]
	\arrow["{\operatorname{Spec}T^\bullet}"', from=2-1, to=2-3]
	\arrow["{\iota_{\pi'}}"', from=2-3, to=1-3]
	\arrow["{\iota_\pi}", from=2-4, to=1-4]
	\arrow["{\operatorname{Spec}S^\bullet}"', from=2-4, to=2-6]
	\arrow["{\iota_{\pi'}}"', from=2-6, to=1-6]
\end{tikzcd}\]
\end{lemma}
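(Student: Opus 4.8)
The plan is to verify the two squares componentwise, one edge at a time, after reducing to the reduction types that occur in multiplicative linear logic. Since $\pi,\pi'$ are MLL proofs, $\gamma$ is either an $\ax/\cut$-reduction or a $\otimes/\parr$-reduction (and, as in the proof of Theorem \ref{thm:main}, it suffices to treat one representative reduction of each type, as displayed in Definition \ref{def:S_T}), and in both cases $\bb{Y}(\pi') = \bb{S}(\pi')$, so $S_\gamma: \bb{S}(\pi)\lto\bb{S}(\pi')$ and $T_\gamma: \bb{S}(\pi')\lto\bb{S}(\pi)$ are defined on the whole of their sources. By Definition \ref{def:S_T} both are induced, via the universal property of the products $\bb{S}(\pi') = \prod_{e'\in\mathcal{E}_{\pi'}}\bb{S}(A_{e'})$ and $\bb{S}(\pi) = \prod_{e\in\mathcal{E}_{\pi}}\bb{S}(A_e)$, from families of component morphisms $\{\rho_{e'}\}$ and $\{\tau_e\}$; the inclusion $\iota_\pi$ is by definition the product of the per-edge inclusions $\bb{A}^{m_e}\lto\bb{S}(A_e)$, where $m_e$ denotes the number of unoriented atoms of $A_e$; and $\operatorname{Spec}T^\bullet$, $\operatorname{Spec}S^\bullet$ are $\operatorname{Spec}$ applied to the $\mathbbm{k}$-algebra homomorphisms that \cite[Proposition 4.6]{AlgPnt} specifies on the relevant polynomial generators. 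Because a morphism into a product is determined by its composites with the projections, it suffices to check, for each edge $e'$ of $\pi'$ (first square) and each edge $e$ of $\pi$ (second square), that the two composites agree after projection onto the factor $\bb{S}(A_{e'})$ (resp.\ $\bb{S}(A_e)$).

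For every edge not displayed in the reduction diagram \eqref{eq:ax/cut_red} or \eqref{eq:tensor_red} there is a corresponding edge of the other net carrying the same formula; there the relevant component of $S_\gamma$ or $T_\gamma$ is a coordinate projection, and, by the construction in \cite[Proposition 4.6]{AlgPnt}, the corresponding part of $T^\bullet$ or $S^\bullet$ is the identity renaming of the atom-variables of that formula. For such edges commutativity is immediate, since $\iota_\pi$, being a product of the per-edge inclusions, commutes with coordinate projections. This leaves only the finitely many edges displayed in \eqref{eq:ax/cut_red} or \eqref{eq:tensor_red}.

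At these displayed edges I would unwind the per-edge inclusion of $\bb{A}^{m}=\operatorname{Spec}\mathbbm{k}[X_1,\ldots,X_m]$ into $\bb{S}(A)=\bb{P}^{2^m-1}$, namely the composite $\operatorname{Spec}\mathbbm{k}[X_1,\ldots,X_m]\xrightarrow{\Psi_m}\operatorname{Spec}((S_m)_{(x_1'\cdots x_m')})\xrightarrow{\Phi_m}(\bb{P}^1)^m\lto\bb{S}(A)$ whose last arrow is the Segre embedding. By Proposition \ref{prop:cartesian_commutes_product} applied to the Cartesian product $S_m$, this identifies $\bb{A}^m$ with the distinguished open affine of $(\bb{P}^1)^m$ on which the $i$-th $\bb{P}^1$-factor carries affine coordinate $X_i$, and identifies its Segre image with the affine chart of $\bb{P}^{2^m-1}$ where the ``all-primed'' coordinate is invertible. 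Coordinate projections, the diagonals $\Delta_{\bb{S}(A)}$ attached to Axiom- and Cut-links, and the Segre composite $f:\bb{S}(A)\times\bb{S}(B)\lto\bb{S}(A\otimes B)$ attached to Tensor- and Par-links all preserve these charts, and on them act atom by atom: the diagonal restricts to the diagonal of $\bb{A}^m\times\bb{A}^m$, i.e.\ the graph of $X_i^{\neg A}\mapsto X_i^A$, which is exactly the equation \eqref{eq:algpnt_equation} attached to an Axiom-link in \cite{AlgPnt} (the passage from \eqref{eq:this_paper_eq} to \eqref{eq:algpnt_equation}); and $f$ restricts to the affine morphism $\bb{A}^{m_A}\times\bb{A}^{m_B}\to\bb{A}^{m_A+m_B}$ whose coordinate-ring map sends each atom-variable of $A\otimes B$ to the corresponding atom-variable of $A$ or of $B$ --- precisely the generator-level action of $T^\bullet$ at the edge $(A\otimes B)_f$ (the computation performed in Section \ref{ex:Grassmann_example} for a Tensor-link, phrased there as ``dividing by the primed variables''). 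Combining these componentwise identities gives the first square, and the second follows by the same argument with the roles of $\pi$ and $\pi'$, and of $S_\gamma$ and $T_\gamma$, interchanged.

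The only real work here is bookkeeping rather than any new idea. One must keep track, for each displayed link, of the bijection between the unoriented atoms of the conclusion formula and those of its premises, and remember that the present model embeds $\bb{A}^m$ inside the Segre-coordinatised $\bb{P}^{2^m-1}$ whereas \cite{AlgPnt} works with $\bb{A}^m$ directly; the compatibility being checked is then exactly the identification of the distinguished affine chart of the Segre image of $(\bb{P}^1)^m$ with $\operatorname{Spec}\mathbbm{k}[X_1,\ldots,X_m]$ furnished by Proposition \ref{prop:cartesian_commutes_product} together with the once-and-for-all choices of $\Phi_m$, $\Psi_m$ and $\phi_{\operatorname{M}}$. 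Since those isomorphisms are fixed uniformly over all formulas and all edges, the finitely many componentwise checks glue into the single commuting square claimed.
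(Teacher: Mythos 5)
Your proposal is correct and follows essentially the same route as the paper's own proof: the paper likewise reduces to checking, edge by edge via the universal property of the product, that $\operatorname{Spec}T^\bullet$ and $\operatorname{Spec}S^\bullet$ are the restrictions of the projections and diagonals to the distinguished affine charts. You are in fact somewhat more careful than the paper's terse ``by inspection'' argument, in particular in treating the Segre component of $T_\gamma$ at the $(A\otimes B)_f$ edge and the chart identification via Proposition \ref{prop:cartesian_commutes_product}, but no new idea is involved.
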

\begin{proof}
    In this case, the maps $S, T$ are either projections or diagonal morphisms. Thus, for the statement to hold, one must check that $\operatorname{Spec}T^\bullet, \operatorname{Spec}S^\bullet$ are the appropriate restrictions of these. This is done by inspection along with (a generalisation of) the fact that the diagonal morphism $\Delta: \bb{A}_{\mathbbm{k}}^1 \lto \bb{A}_{\mathbbm{k}}^2$ is $\operatorname{Spec}(f)$ where $f: \mathbbm{k}[x_1, x_2] \lto \mathbbm{k}[x]$ is the $\mathbbm{k}$-algebra homomorphism mapping $x_1 \longmapsto x, x_2 \longmapsto x$, along with (a generalisation of) the fact that the projection (for $i = 1,2$) $\operatorname{Projection}_i: \bb{A}_{\mathbbm{k}}^2 \lto \bb{A}_{\mathbbm{k}}^1$ is $\operatorname{Spec}(g_i)$ where $g_i: \mathbbm{k}[x] \lto \mathbbm{k}[x_1, x_2]$ is the $\mathbbm{k}$-algebra homomorphism mapping $x \longmapsto x_i$.
\end{proof}

\section{Future paths}
\label{sec:future}
\textbf{Extending the model to all of MELL.} The immediate obstruction to extending our model to all of MELL is Lemma \ref{lem:well_defined} which only considers shallow proofs. We postulate that this lemma \emph{does} indeed generalise. To check this, one must check that the local freeness condition is satisfied by the immersion \eqref{eq:big_embedding}. However, it is not precisely \emph{this} lemma which would be generalised. A difficulty in working with our model is the fact that we have defined the Hilbert scheme $H_T$ to parameterise graded $\mathbbm{k}$-modules $T$. In fact, a more general Hilbert scheme exists which is parameterised by projective schemes instead. Similarly to $H_T$ the more general Hilbert scheme represents a functor. The reference for this is \cite{HilbertScheme}.
\begin{defn}
Suppose $S$ is a locally Noetherian scheme and $X \lto S$ is a projective scheme over $S$. The \textbf{Hilbert functor} from the category $\underline{\operatorname{Sch}}_{S}$ whose objects are locally Noetherian schemes over $S$ to the category $\underline{\operatorname{Set}}$ of sets is given by:
\begin{align*}
\operatorname{Hilb}_{X/S}: \underline{\operatorname{Sch}}_{S} &\lto \underline{\operatorname{Set}}\\
T &\longmapsto \{\text{Closed subschemes }Z \lto X \times_S T \mid\\
&\quad\quad Z \text{ is flat over }T\}.
\end{align*}
\end{defn}
\begin{thm}
There exists a scheme $\operatorname{Hilb}_{X/S}$ representing this functor.
\end{thm}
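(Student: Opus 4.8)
The plan is to follow Grothendieck's original argument from \cite{HilbertScheme}, reducing the general case to a concrete construction inside a Grassmann scheme, much as the earlier $H_T^h$ was built inside $G^r_n$. First I would reduce to the universal case $X = \bb{P}^N_S$: since $X \lto S$ is projective there is, locally on $S$, a closed immersion $X \lto \bb{P}^N_S$ over $S$, and then $\operatorname{Hilb}_{X/S}$ is a subfunctor of $\operatorname{Hilb}_{\bb{P}^N_S/S}$. One checks this subfunctor is \emph{closed}: for a $T$-point $Z \subseteq \bb{P}^N_T$ of $\operatorname{Hilb}_{\bb{P}^N_S/S}$, the locus in $T$ over which $Z$ factors through $X \times_S T$ is the vanishing locus of a coherent ideal, hence closed. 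Since representability is local on the base and $\operatorname{Hilb}$ is a Zariski (indeed fppf) sheaf, we may assume $S = \operatorname{Spec}A$ with $A$ Noetherian, and glue at the end. Finally, a flat family has locally constant Hilbert polynomial, so $\operatorname{Hilb}_{\bb{P}^N_S/S} = \coprod_P \operatorname{Hilb}^P_{\bb{P}^N_S/S}$ over numerical polynomials $P$, and it suffices to represent each summand.

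The key boundedness input is Castelnuovo--Mumford regularity: there is an integer $m = m(N,P)$ such that for every field $K \in \mathbbm{k}-\underline{\operatorname{Alg}}$ and every closed subscheme $Z \subseteq \bb{P}^N_K$ with Hilbert polynomial $P$, the ideal sheaf $\mathcal{I}_Z$ is $m$-regular; in particular $\mathcal{I}_Z$ is generated in degree $m$ and has no higher cohomology in degrees $\geq m$. Combining this with cohomology and base change, for any flat family $Z \subseteq \bb{P}^N_T$ in $\operatorname{Hilb}^P(T)$ the sheaf $\mathcal{F}_Z := (\pi_T)_*\mathcal{I}_Z(m)$ is a sub-$\mathcal{O}_T$-module of $(\pi_T)_*\mathcal{O}_{\bb{P}^N}(m) = H^0(\bb{P}^N_A,\mathcal{O}(m)) \otimes_A \mathcal{O}_T$ whose quotient is locally free of rank $P(m)$, and $Z$ is recovered from $\mathcal{F}_Z$ by saturation. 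This yields a natural transformation
\[
\operatorname{Hilb}^P_{\bb{P}^N_S/S} \lto \underline{G}\big(H^0(\bb{P}^N_A,\mathcal{O}(m)),\,P(m)\big)
\]
into the Grassmann functor of submodules with locally free rank-$P(m)$ quotient, which is represented by a projective $A$-scheme by the relative analogue of Proposition \ref{prop:Grassmann_representable}.

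Third, I would show this transformation is representable by closed immersions, so that each $\operatorname{Hilb}^P$ is cut out as a closed subscheme of the Grassmann scheme. Over the Grassmann scheme there is a universal submodule $\mathcal{W} \subseteq H^0(\mathcal{O}(m)) \otimes \mathcal{O}$; it generates a graded subsheaf of $\bigoplus_{d \geq m} H^0(\mathcal{O}(d)) \otimes \mathcal{O}$, and one must impose that the quotient in each degree $d \geq m$ is locally free of rank $P(d)$, i.e. that the saturation of $\mathcal{W}$ defines a $T$-flat family with Hilbert polynomial $P$. By the theory of flattening stratifications this locus is locally closed; but because $m$-regularity propagates to all larger degrees, once the degree-$m$ condition holds (already built into the Grassmann scheme) the remaining conditions in degrees $> m$ are redundant, so the locus is in fact closed. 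One then verifies that the universal subscheme over this closed subscheme is flat with Hilbert polynomial $P$ and that the whole construction is functorial, whence the closed subscheme represents $\operatorname{Hilb}^P$; taking disjoint unions over $P$, descending through the closed immersion $X \lto \bb{P}^N_S$, and gluing over an affine Noetherian cover of $S$ gives the general scheme $\operatorname{Hilb}_{X/S}$.

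The main obstacle is the boundedness step: producing the \emph{uniform} regularity bound $m(N,P)$ valid across all fields, which is exactly what makes the target Grassmann scheme finite-dimensional. This is Mumford's inductive argument on $N$ via generic hyperplane sections together with the Castelnuovo lemma, and it is precisely the ingredient that has no counterpart in the construction of $H_T^h$ earlier in the paper, where the hypothesis $\sum_{d}h(d) < \infty$ supplied boundedness for free. A secondary subtlety, noted above, is the upgrade from a locally closed to a genuinely closed subscheme of the Grassmann scheme in the third step, which again rests on the regularity bound.
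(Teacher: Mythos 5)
The paper does not actually prove this theorem: it appears in Section \ref{sec:future} as background for a research proposal, with the proof deferred entirely to Grothendieck \cite{HilbertScheme}. Your proposal is therefore not competing with an argument in the text; it is a correct outline of the standard Grothendieck--Mumford construction (reduction to $\bb{P}^N_S$ via a closed subfunctor, decomposition by Hilbert polynomial, uniform Castelnuovo--Mumford regularity, embedding into a Grassmannian by $H^0(\mathcal{I}_Z(m))$, and identification of the image), and it correctly isolates the uniform regularity bound as the essential input. Two remarks. First, your claim that this boundedness step ``has no counterpart in the construction of $H_T^h$ earlier in the paper'' is not quite right: for Hilbert functions of ideals in $\mathbbm{k}[x_0,\ldots,x_n]$ the hypothesis $\sum_d h(d)<\infty$ fails, and the paper's substitute is precisely the Gotzmann number $G(I)$ together with the supportive sets $\{G(I)\}$ and $\{G(I),G(I)+1\}$ of Proposition \ref{prop:big_assumption}; by Gotzmann's regularity theorem $G(I)$ is a uniform regularity bound, so this is the same ingredient in combinatorial clothing. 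Second, the step where you upgrade the locus in the Grassmann scheme from locally closed to closed is the one place the sketch is too breezy: the assertion that ``regularity propagates to larger degrees'' does not by itself show that the rank conditions in degrees $>m$ cut out a closed locus over an arbitrary Noetherian base. One either invokes Gotzmann persistence (the content of the very supportive set $\{G(I),G(I)+1\}$ in \cite{hilb}) to reduce to finitely many conditions that are visibly closed, or one constructs $\operatorname{Hilb}^P$ as a locally closed stratum of a flattening stratification and deduces closedness afterwards from properness via the valuative criterion. Neither route is difficult, but one of them must be made explicit for the argument to close.
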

This is a more interesting functor for us because it yields a simple way of thinking about Pax-links: they give rise to the product of locally projective schemes $X_1 \times \ldots \times X_n$ over which the closed subscheme corresponding to the interior of the box must be flat. In particular, this will avoid all necessity of localisation inside Definition \ref{def:MELL_interpretation}, which we believe would lead to a more natural model.

Thus, our future work proposal is as follows: first we extend the model for shallow proofs to one where the Hilbert scheme $H_T$ is replaced by the more general Hilbert scheme $\operatorname{Hilb}_{X/S}$ above. Then, we would aim to extend the resulting model to all of MELL. Of course, one could also dream of going even further than MELL; additives, differential linear logic, etc.

\textbf{Relating this model to the MLL models given in Chapter \cite[Chapter IV]{troiani2024phd}.} We presented three models of MLL in total in \cite{troiani2024phd}. One gives proofs as linear equations, which has been extended to shallow proofs inside this paper (as well as in \cite[Chapter III]{troiani2024phd}). The other two give proofs as matrix factorisations, and proofs as quantum error correction codes respectively. Section \cite[Section 4.2.5]{troiani2024phd} relates the algebraic model to the quantum error correction code model via the matrix factorisation one, and so it would be interesting to see how the Hilbert scheme plays a role in these other models. For instance, the Hilbert scheme plays the role of a moduli space, in that it parameterises flat families of closed subschemes. It would be interesting to consider the moduli space of matrix factorisations as a possible model in the sense of \cite{fiore2024monoidal} of the exponential in linear logic.

\textbf{Classifying the Hilbert functions which arise from proofs.} Our model considers the set $\mathcal{H}$ of \emph{all} Hilbert functions throughout. Surely not all Hilbert functions arise from proofs. It would be interesting to find the subset $\mathcal{H}' \subseteq \mathcal{H}$ so that $h \in \mathcal{H}'$ if and only if there exists a proof $\pi$ such that the closed subscheme $\bb{X}(\pi) \lto \bb{S}(\pi)$ has Hilbert function $h$.

It has been noted in \cite[Remark 2.12]{troiani2024phd} that there is more to the Geometry of Interaction program than just modelling cut-elimination with a non-trivial effective procedure. One could also ask for a correctness criterion such as the long trip condition given in the Sequentialisation Theorem (due to Girard \cite{LinearLogic}) to be present in our model as well. It is possible that the classification of the Hilbert functions which arise from proofs has relevance to this line of research.

\textbf{Elimination Theory.} The algebraic model given in \cite{AlgPnt} not only gives an interpretation of proofs in MLL but also relates the cut-elimination process to the Buchberger algorithm. As mentioned in Remark \ref{rmk:elimination} it seems possible that this relationship extends to MELL, at least to shallow proofs. There are many connections to the construction of the (multigraded) Hilbert scheme of \cite{hilb} and syzygies, monomial ideals, Gr\"{o}bner bases, etc. These connections should be fully developed.

\label{Bibliography}
\bibliographystyle{plain}  
\bibliography{Bibliography}

\appendix

\end{document}